\newcommand{\RR}{\mathbb R}
\newcommand{\Dm}{D_{\scriptscriptstyle{-}}}
\newcommand{\Dp}{D_{\scriptscriptstyle{+}}}
\newtheorem{proposition}{Proposition}
\begin{document}

\title{Modeling and Optimal Control of an Octopus Tentacle}

\author[S. Cacace]{Simone Cacace}
\address[S. Cacace]
{Dipartimento di Matematica e Fisica,
	 Universit\`{a} degli Studi Roma Tre, Largo S. Murialdo, 1,
	00154  Roma, Italy, cacace@mat.uniroma3.it}
\author[A. C. Lai]{Anna  Chiara Lai}
\address[A. C. Lai]{ Dipartimento di Scienze di Base
	e Applicate per l'Ingegneria,
	Sapienza Universit\`{a} di  Roma,
	Via A. Scarpa, 16,
	00161  Roma, Italy, anna.lai@sbai.uniroma1.it}

\author[P. Loreti]{Paola Loreti}
\address[P. Loreti]{Dipartimento di Scienze di Base
	e Applicate per l'Ingegneria,
	Sapienza Universit\`{a} di  Roma,
	Via A. Scarpa, 16,
	00161  Roma, Italy, \textsf{paola.loreti@sbai.uniroma1.it}}
\keywords{
	Octopus tentacle modeling, optimal control, inextensible elastic rods, soft manipulators. 
}

\begin{abstract}
We present a control model for an octopus tentacle, based on the dynamics of an inextensible string with curvature constraints and curvature controls. 
We derive the equations of motion together with an appropriate set of boundary conditions, and we characterize the corresponding equilibria.  
The model results in a system of fourth-order evolutive nonlinear controlled PDEs, generalizing the classic Euler's \emph{dynamic elastica} equation, 
that we approximate and solve numerically introducing a consistent finite difference scheme. 
We proceed investigating a reachability optimal control problem associated to our tentacle model. 
We first focus on the stationary case, by establishing a relation with the celebrated Dubins' car problem. 
Moreover, we propose an augmented Lagrangian method for its numerical solution. 
Finally, we address the evolutive case obtaining first order optimality conditions, then we numerically solve the optimality system by means 
of an adjoint-based gradient descent method.
\end{abstract}
\maketitle

\section{Introduction}
Octopus tentacles are extremely complex and fashinating biological structures,  
whose study is motivated by both natural science interests as well as 
applications to robotics, in particular, in the framework of soft manipulators \cite{nature}. 
In this paper we are interested in an optimal control problem associated to the dynamics of an octopus tentacle. The aim 
is to model the voluntary muscle 
contractions of the tentacle and to derive optimality conditions for a reachability problem. 
To this end, we propose a continuous model, based on the dynamics of an inextensible string 
with curvature constraints and curvature controls. More precisely, our model in an extension of the \emph{Euler's dynamic elastica} equation 
(see equation \eqref{dee} below) in the following sense: we add to the exact inextensibility constraint and the bending momentum, 
a term representing a curvature constraint and a control term. The curvature constraint prevents the tentacle from bending over a fixed treshold; while the control term, 
which we consider the main novelty of this model, forces pointwise the curvature of the tentacle.  
After characterizing explicitly the equilibria of the tentacle system as functions of the controls, 
both in terms of shape configuration and tension, we introduce a finite difference scheme for its numerical approximation, and 
we present some simulations validating the results. 
Then, we address the following optimal control problem: 
steer the tentacle tip to a target point minimizing the activation of the tentacle muscles. 
More precisely, we want to optimize the distance of the tentacle tip from a fixed target point and a quadratic cost associated to the controls. 
This is done in two different setting, a stationary case and a dynamic case. 
In the stationary case, we obtain a characterization of the reachable set for the tentacle tip, by establishing a relation with a 
classical motion planning problem first posed by Markov in 1889 \cite{markov}, also known as \emph{Dubins car problem}. 
Moreover, we provide an augmented Lagrangian method for the numerical solution of the optimal control problem and present some simulations.
In the dynamic case, the objective functional accounts for the whole evolution of the tentacle on a time interval $[0,T]$, 
plus a term representing the kinetic energy at the final time $T$. The aim is then also to stop the tentacle as soon as possible. 
To this end we introduce an adjoint state and derive first order optimality conditions. The resulting optimality system is then solved 
numerically via an adjoint-based gradient descent method and some simulations are presented.
We remark that, to the best of our knowledge, the present theoretic approach, in particular the dynamic case based on the optimal control of PDEs 
(see e.g. \cite{optimalpdebook}), appears to be new.

We now outline the main features of our model, while postponing a more formal justification to Section \ref{Sec:Model}. 
The unknowns are a curve $q(s,t)\in\RR^2$ describing the shape of the tentacle, and the associated inextensibility multiplier $\sigma(s,t)\in \RR$, 
playing the role of the tension of the tentacle. We remark that the variable $s\in[0,1]$ is an arclength coordinate parametrizing the curve $q$, 
and we denote by $q_s$, $q_{ss}$, $q_{tt}$ partial derivatives in space and time respectively.
The tentacle model includes:
\begin{enumerate}
\item[-] the inextensibility constraint, given by the equation $|q_s|^2=1$ and encompassed in the dynamics via a scalar Lagrange multiplier $\sigma$;
\item[-] the bending momentum, i.e. an intrinsic resistance of the tentacle to bending. It is described by an elastic potential of the form $\varepsilon(s)|q_{ss}|^2$, 
where the function $\varepsilon$ represents a non uniform bending stiffness; 
\item[-] a curvature constraint, given by the inequality $|q_{ss}|\le \omega(s)$, for some threshold positive function $\omega$. 
It is imposed using an unilateral potential of the form $\nu(s)\left(|q_{ss}|^2-\omega^2(s)\right)_+^2$, where $\nu$ is a non uniform elastic constant 
and $(\cdot)_+$ denotes the positive part of its argument;
\item[-] a control term forcing the curvature of the tentacle, given by the equality constraint $q_s \times q_{ss}=\omega(s) u(s,t)$, 
for a control map $u$ valued in $[-1,1]$. It is imposed using an elastic potential of the form 
$\mu(s)\left(\omega(s) u(s,t)-q_s \times q_{ss}\right)^2$, where $\mu$ is the corresponding non uniform elastic constant and 
$q_s \times q_{ss}$ represents the signed curvature (see Section \ref{Sec:Model} below for the definition of the vector product $\times$ in $\RR^2$).
\end{enumerate}
\noindent The tentacle dynamics is then described by the following system of fourth order, evolutive, non-linear, controlled PDEs: for $(s,t)\in(0,1)\times(0,T)$
\begin{equation}\label{sysintro}
 \left\{
  \begin{array}{l}
   \rho q_{tt}=\left(\sigma q_s-H q_{ss}^\bot\right)_s -\left(G q_{ss}+H q_{s}^\bot\right)_{ss}\\
   |q_s|^2=1 
  \end{array}
 \right.
\end{equation}
where 
$$G[q,\nu,\varepsilon,\omega](s,t):=\varepsilon(s)+\nu(s)\left(|q_{ss}(s,t)|^2-\omega^2(s)\right)_+,$$
$$H[q,\mu,u,\omega](s,t):=\mu(s)\left(\omega(s) u(s,t)-q_s(s,t) \times q_{ss}(s,t)\right)\,,$$
the function $\rho(s)$ represents the non uniform mass distribution of the tentacle, 
and the symbol $v^\bot$ denotes the counterclockwise orthogonal vector to $v$. The system is completed with appropriate boundary and initial conditions.\\
To put the tentacle model into perspective, we now briefly discuss its relation with some well-known equations in the literature, depending on specific 
choices of the parameters, summarized in the following table: \\  
\begin{center}
 \begin{tabular}{l|c|c|c|c}
  &Inextensibility&Bending&Curvature&Curvature\\		
  &&momentum& constraints&controls
\\
		\hline
		Euler-Bernoulli beam&$\times$&\checkmark&$\times$&$\times$\\
		Whip equation&\checkmark&$\times$&$\times$&$\times$\\
		Dynamic elastica&\checkmark&\checkmark&$\times$&$\times$\\
		Tentacle equation \eqref{sysintro}&\checkmark&\checkmark&\checkmark&\checkmark\\
	\end{tabular}
\end{center}
\medskip
\noindent{\em Euler-Bernoulli beam}. 
This model describes the vibration of a beam, depending on the given boundary conditions. A typical choice is the cantilevered case, 
in which one endpoint of the beam is clamped and the other one is free. To retrieve it, we neglect the inextensibility constraint, 
we set $\nu(s)=\mu(s)\equiv 0$ and $\varepsilon(s)=const$, so that \eqref{sysintro} reduces to
\begin{equation}\label{ebb2}
 \rho q_{tt}=-\varepsilon q_{ssss}\,.
\end{equation}
Assuming analytical initial data and uniform mass distribution $\rho$, Equation \eqref{ebb2} admits a unique classical (vector) solution given by
\begin{align*}q(s,t)=\sum_{k\in \mathbb N}a_ke^{-i\varphi_k t}\left(\cosh(\beta_k s)-\cos(\beta_k s)+\gamma(\beta_k)(\sinh(\beta_ks)-\sin(\beta_k s))\right)+s q_s(t,0)\,,
\end{align*}
where the coefficients $a_k\in \mathbb C^2$ depend only on the initial data, the sequence of real numbers $\{\beta_k\}$ is composed by the positive solutions 
of the equation 
$$\cosh(\beta)\cos(\beta)+1=0\,,$$
while $\varphi_k$ and $\gamma(\beta_k)$ are given, for each $k$, respectively by
$$\varphi_k=\beta_k^2\sqrt{\frac{\varepsilon}{\rho}}\,,\qquad \gamma(\beta_k)=\frac{\cosh \beta_k+\cos \beta_k}{\sinh \beta_k+\sin \beta_k}\,.$$

\medskip
\noindent{\em Whip equation}. Assuming $\nu(s)=\mu(s)=\varepsilon(s)\equiv 0$, 
Equation \eqref{sysintro} reduces to a second-order differential equation modeling the motion of an inextensible string
\begin{equation}\label{whip}
\left\{
  \begin{array}{l}
   \rho q_{tt}=\left(\sigma q_s\right)_s\\
   |q_s|^2=1 
  \end{array}
 \right.
\end{equation}
Typical boundary conditions prescribe one fixed endpoint and zero tension at the other endpoint. 
The problem results in a nonlinear system with a boundary layer at the free end, where the motion degenerates into a first order equation in space. 
To the best of our knowledge, one of the most complete results for this system is given in \cite{preston}, proving existence and uniqueness of solutions in suitable weighted Sobolev spaces. 

\medskip
\noindent{\em Euler's dynamic elastica}. Assuming $\nu(s)=\mu(s)\equiv 0$ and $\varepsilon(s)=const$, 
we obtain the following equation, also known as the nonlinear Euler-Bernoulli beam equation 
\begin{equation}\label{dee}
\left\{
  \begin{array}{l}
   \rho q_{tt}=\left(\sigma q_s\right)_s -\varepsilon q_{ssss}\\
   |q_s|^2=1 
  \end{array}
 \right.
\end{equation}
Classical boundary conditions prescribe the position and the tangent of both endpoints. In this case, the problem is well-posed and admits a smooth solution, 
representing the curve of fixed length that connects the two endpoints minimizing the integral of the squared curvature.

The problem of modeling and control soft inextensible bodies, including octopus tentacles, 
has been attacked with several approaches, including tools
coming from calculus of variations, control theory, motion planning for robotics. This resulted in a wide scientific literature which is 
almost impossible to exhaustively summarize. We then conclude this introduction by reporting only the papers that mostly inspired our work.

For general references on the modeling of octopus tentacles we mention the series of papers \cite{octopusbiomechanics,octopusbiomechanics2}, 
where a rich discrete control model is proposed: note that the main difference with the present paper is that we discuss a simpler but fully continuous
model. 
For the modeling framework, we refer to the survey \cite{stringschainsropes} for an introduction on the derivation of 
equations for inextensible strings (with no curvature constraints) and their discrete counterpart, the chains.
The papers \cite{ropedynamics,nonlinearhanging} treat the same topic, stressing the numerical aspects of the problem.
The paper \cite{vurpoimportante} provides a nice investigation of boundary conditions for the Euler's dynamic elastica 
and presents some numerical simulations --- also, see the paper \cite{discreteelastica} for a discussion of the discrete case.
The papers \cite{finger, hand} contain a discussion  in  a control and number theoretic setting of the discrete, stationary case with an exponential decay in the mass distribution. The paper \cite{fibonacci} includes more general, self-similar mass distributions; we finally refer to  \cite{ems} for a first investigation of the continuous counterpart of these models. 

The problem related to curvature constraints and controls is widely investigated in the framework of soft robotics: the problem is in general to 
prescribe the curvature of a flexible device in a distributed fashion, in order to perform locomotion or grasping, see \cite{science,biomimeticrobotoctopus}. 
Among many others, we refer to \cite{kinematicssoft} for the study of the kinematics of a soft-robot prototype. 
In \cite{octopusrobotprescribed,dynamicoctopusrobot}, after deriving the dynamics of a discrete tentacle-like actuator, 
a control strategy is prescribed a priori, and its efficiency is either numerically confirmed or measured by actual prototypes, respectively. 
An interesting approach based on machine learning can be found in \cite{learning}. 

One our main results consists in showing that equilibria of \eqref{sysintro} solve an ordinary differential equation recurring in motion planning problems. 
For an overview on the aspects that are more related to our approach, we refer to the paper \cite{dubinsapproximation} and the references therein. 
In order to characterize the reachable set of the equilibria, we rely on the results in \cite{cockayne}, see also \cite{reachablecarmotion}. 
Finally, we remark that the stationary case studied in this paper is very close to the framework of \emph{Euler's elastica}, i.e., the study of static equations 
for elastic rods -- see \cite{elasticahystory} for an hystorical excursus on the problem, and the papers 
\cite{elasticastability, dynamicelastica, elasticapulvirenti} and the references therein for an overview.  \\

The paper is organized as follows. Section \ref{Sec:Model} is devoted to our octopus tentacle model, 
including the derivation and the numerical approximation of its equations of motion, and also the characterization its equilibria. 
In Section \ref{Sec:Static}, we present and numerically solve the optimal control problem in the stationary case. Moreover, 
we characterize the reachable set of the tentacle tip. Finally, in Section \ref{Sec:Dynamic}, 
we address the optimal control problem in the dynamic case. We derive first order optimality conditions, and we numerically solve the corresponding optimality system.   
For each section, some numerical simulations complete the presentation. 

\section{A control model for an octopus tentacle}\label{Sec:Model}
This section is devoted to our octopus tentacle model. Starting from a discrete particle system describing a non uniform $N$-pendulum, 
we encode all the geometrical constraints into a discrete Lagrangian for the system, then we obtain the continuous 
model as a formal limit for the number of particles $N$ going to infinity. We proceed by applying the least action principle to obtain the equation of motions, 
and we study its equilibrium configurations depending on the given control map. Finally, we discretize the equations by means of a finite difference scheme 
and we present some simulations for validating the model. 

Let us fix some notations. The scalar product between two vectors $q,w\in \RR^2$ is denoted 
by $q\cdot w$, whereas the Euclidean norm is denoted by $|q|=\sqrt{q\cdot q}$. Moreover, we define the counterclockwise orthogonal vector
$$q^\bot:=\begin{pmatrix}
              0&1\\
              -1&0
             \end{pmatrix}q$$
and, with a little abuse of notation, we denote by $q\times w$ the vector product in $\RR^2$, namely the scalar quantity 
$q\times w:=q\cdot w^\bot$. 
We use the symbol $(\cdot)_+$ to denote the positive part function: $(x)_+=x$ if $x>0$ and $(x)_+=0$ otherwise. We also use the symbol $0$ to denote the null vector in $\RR^2$.

\subsection{From a discrete particle system to a continuous tentacle}\label{sdiscrete}
Our first step of modeling is to consider the tentacle as a three-dimensional body with an axial 
symmetry, with a fixed endpoint and characterized by longitudinal 
inextensibility: the evolution of the curve on the plane representing the 
symmetry axis is the object of our investigation. Roughly speaking, we neglect the torsion of the curve and reduce the problem in a two-dimensional setting, 
approximating the whole tentacle with a planar inextensible string. 
We proceed by introducing a discretization, which is, as a matter of fact, 
the cornerstone of both the desired continuous model and the numerical scheme for its simulation. 
We sample the shape of the tentacle by $N$ particles (or joints) with positions $q_k\in\RR^2$ and masses $m_k>0$, for $k=1,...,N$. 
We pose an additional particle $q_0$ fixed at the origin, representing the anchor point of the tentacle.
Moreover, for formal computations, we also add two ghost particles $q_{-1}$ and $q_{N+1}$ at the endpoints: 
the first particle is aligned to the vertical axis, i.e. $q_{-1}=q_0+\ell e_2$, 
where $\ell>0$ is meant to represent the length of two consecutive joints and to go to zero as $N\to\infty$, while $e_2$ denote the unit vector $(0,1)$; 
the last particle is aligned with the two preceding joints, i.e. $q_{N+1}=q_N+(q_N-q_{N-1})$. 
We introduce the dependency of the joints on time and, with a little abuse of notation, we denote the $(N+3)$-tuple of vectors $(q_{-1}(t),q_0(t),...,q_{N+1}(t))$ by $q(t)$, 
or simply $q$ depending on the context. Similarly, we denote the time derivative by $\dot q$.

We now impose the inextensibility constraint, assuming that the distance between consecutive joints is exactly $\ell$, i.e., for $k=1,...,N$
$$
|q_{k}-q_{k-1}|=\ell\,.
$$
We remark that, in most cephalopods species, a tentacle is thicker when close to the body and 
thinner when reaching the free endpoint. This morphological aspect can be easily encoded by assuming that the sequence of masses $m_k$ is decreasing in $k$. 
Note that, at this stage, the considered particle system can be viewed as just a non uniform $N$-pendulum, or a chain.

To deal with the ability of the tentacle to bend (as if it were a beam), we introduce some additional constraints involving its curvature. 
We assume that, for $k=1,...,N$,
the angle formed by the three consecutive joints $q_{k-1}, q_k, q_{k+1}$ is bounded by a maximum angle $\alpha_k$. 
This relation can be easily expressed by the following inequality constraint:
$$(q_{k+1}-q_k)\cdot(q_{k}-q_{k-1})\ge \ell^2\,\cos(\alpha_k)\,.$$
On the other hand, we take into account a bending momentum, namely we assume that the tentacle has an intrinsic resistance to bending. The 
position at rest corresponds to null relative angles, as described by the following equality constraint:
$$(q_{k+1}-q_k)\times(q_{k}-q_{k-1})=0\,.$$
Finally, we endow our model with a control term, pointwise prescribing the curvature of the tentacle. More precisely, we can force the exact angle between 
the joints $q_{k-1}, q_k, q_{k+1}$ by means of the following equality constraint: 
$$(q_{k+1}-q_k)\times(q_{k}-q_{k-1})=\ell^2\,\sin(\alpha_k u_k)\,,$$
where $u_k\in[-1,1]$ is the control associated to $k$-th joint. 
In this way, we take into account the fact that an octopus tentacle is composed by independent muscular tissues. 
The local contraction of a muscle, which is the way an octopus controls its limbs, prescribes indeed the local curvature. 

The next step is to form a suitable Lagrangian for our discrete model. To this end, we impose the three curvature constraints discussed above via penalization, 
namely we define respectively the following quadratic potentials: for $k=1,...,N$
$$G_k(q):=\nu_k\left(\cos(\alpha_k)-\frac{1}{\ell^2}(q_{k+1}-q_{k})\cdot(q_{k}-q_{k-1})\right)_+^2\,,$$
$$B_k(q):=\varepsilon_k\Big( (q_{k+1}-q_{k})\times(q_{k}-q_{k-1})\Big)^2\,,$$
$$H_k(q):=\mu_k\left(\sin(\alpha_ku_k)-\frac{1}{\ell^2}(q_{k+1}-q_{k})\times(q_{k}-q_{k-1})\right)^2\,,$$
where the penalty parameters $\nu_k, \mu_k, \varepsilon_k$ play the role of elastic constants. 
This choice is motivated by the fact that the tentacles are soft elastic structures, allowing for small violations of their constraints, 
at least on their radial components. We can incorporate in the elastic constants the morphological property that thin joints bend easier than thick joints. In 
this respect, we can assume that the sequences $\nu_k, \mu_k, \varepsilon_k$ are decreasing in $k$, whereas the sequence of angles $\alpha_k$ is increasing in $k$.

On the other hand, we impose the inextensibility constraint exactly, by defining for $k=1,...,N$
$$F_k(q):=\sigma_k\left(|q_{k}-q_{k-1}|^2-\ell^2\right)\,,$$
where $\sigma_k$ is a Langrange multiplier representing the tension exerted along the tentacle. 
We must say that, actually, longitudinal extension of the tentacles is also observed in many species (e.g. in the \emph{octopus vulgaris}),  
so one may wonder about the choice of an exact constraint. 
This is essentially motivated by three aspects. First, we recall that the muscolar system of an octopus is formed by volume preserving modules called \emph{hydrostats}: 
when a hydrostat is contracted, it shortens. The variability of the length of the tentacle is mainly due to this compensation mechanism, 
rather  than mere elastic forces. We guess that a more accurate model should include a curvature depending local inextensibility constraint. 
Therefore, at this stage of our investigation, we opted for a constant exact constraint which may be a first step towards this more accurate mathematical model. 
The second reason is given by the fact that octopus models are massively applied to soft robotics, and a large class of these devices are indeed inextensible. 
Finally, as shown below, this choice allows to slightly simplify the expression of the continuous Lagrangian of the system and, consequently, the corresponding equations of motion. 

Now we are ready to introduce the Lagrangian associated to the considered particle system:
\begin{equation}\label{discretelagrangian}
\mathcal L_N(q,\dot q,\sigma):=\sum_{k=1}^N\left( 
\frac12 m_k |\dot q_k|^2
-\frac{1}{2\ell} F_k(q)
-\frac{1}{\ell^3} G_k(q)
-\frac{1}{2\ell^5} B_k(q)
-\frac{1}{2\ell}H_k(q)
\right),
\end{equation}
where the first term represents the kinetic energy, while the other terms account for the constraints, suitably rescaled in view of the limit $N\to\infty$ we are going to perform. 
To this end, we assume that the total length of the chain is equal to $1$, and we set $\ell=1/N$. Moreover, we take the masses and the maximum bending angles 
of the form $m_k=\ell\rho_k$ and $\alpha_k=\ell\omega_k$ respectively, for given mass distributions $\rho_k$ and curvatures $\omega_k$. 
Finally, we consider smooth functions $\nu, \mu, \varepsilon, \rho, \omega:[0,1]\to\RR^+$ and, for $T>0$, smooth functions 
$q:[0,1]\times[0,T]\to\RR^2$, $\sigma:[0,1]\times[0,T]\to\RR$ and $u:[0,1]\times[0,T]\to[-1,1]$ such that, for all $N$, $k=1,...,N$ and $t\in[0,T]$
$$
\nu_k=\nu(k\ell),\quad \mu_k=\mu(k\ell),\quad \varepsilon_k=\varepsilon(k\ell),\quad \rho_k=\rho(k\ell),\quad \omega_k=\omega(k\ell),
$$
$$
q_k(t)=q(k\ell,t),\quad \sigma_k(t)=\sigma(k\ell,t),\quad u_k(t)=u(k\ell,t)\,.
$$
We remark that the space variable, denoted by $s\in[0,1]$, represents an arclength for the curve parametrized at time $t$ by $q(s,t)$. 
We employ standard notations $q_s$, $q_{ss}$ for partial derivatives in space, and $q_t$, $q_{tt}$ for partial derivatives in time. 
Moreover, we set $q^k_s:=q_s(\ell k)$, $q^k_{ss}:=q_{ss}(\ell k)$ and consider following Taylor expansions:
$$q_k-q_{k-1}=\ell q^k_s+o(\ell)=\ell q^k_s-\frac12\ell^2q^k_{ss}+o(\ell^2)\,,$$
$$q_{k+1}-q_{k}=\ell q^k_s+\frac12\ell^2q^k_{ss}+o(\ell^2)\,,$$
$$q_{k+1}-2q_k+q_{k-1}=\ell^2 q^k_{ss}+o(\ell^2)\,.$$
Then, as $N\to \infty$ (hence $\ell\to 0$), we have
\begin{align*}\frac{1}{\ell} F_k(q)&=\frac{1}{\ell}\sigma_k\left(|q_{k}-q_{k-1}|^2-\ell^2\right)=\frac{1}{\ell} 
\sigma_k\left( \ell^2|q^k_s|^2 +o(\ell^2)-\ell^2\right)\\
&=\ell\sigma_k(|q^k_s|^2 - 1) +o(\ell).\end{align*}
Since the inextensibility constraint is exact, we can use the relation $|q_{k}-q_{k-1}|=\ell$, for all $k=1,...,N$, to simplify the curvature potential given by $G_k$. Indeed, we get
\begin{align*}
\frac{1}{\ell^3} G_k(q)&=\frac{1}{\ell^3} \nu_k\left(\cos(\ell\omega_k)-\frac{1}{\ell^2}(q_{k+1}-q_{k})\cdot(q_{k}-q_{k-1})\right)_+^2\\
&=\frac{1}{\ell^3} \nu_k\left(1-\frac12\ell^2\omega_k^2+o(\ell^2)+\frac{1}{2\ell^2}\left( |q_{k+1}-2q_{k}+q_{k-1}|^2\right) -1 \right)_+^2\\
&=\frac{1}{\ell^3} \nu_k\left(-\frac12\ell^2\omega_k^2+o(\ell^2)+\frac{1}{2\ell^2}\left( \ell^4 |q^k_{ss}|^2+o(\ell^4)\right) \right)_+^2\\
&=\frac14\ell\nu_k\left(|q^k_{ss}|^2-\omega_k^2\right)_+^2+o(\ell)\,.
\end{align*}
Moreover, recalling that $q^k_s\times q^k_s=0$, $q^k_{ss}\times q^k_{ss}=0$ and $q^k_{s}\times q^k_{ss}=-q^k_{ss}\times q^k_{s}$ by definition of the vector product, we obtain 
\begin{align*}
\frac{1}{\ell^5} B_k(q)&=
\frac{1}{\ell^5}\varepsilon_k\Big( (q_{k+1}-q_{k})\times(q_{k}-q_{k-1})\Big)^2\\
&=\frac{1}{\ell^5}\varepsilon_k\Big((\ell q^k_s+\frac{\ell^2}{2}q^k_{ss}+o(\ell^2))\times(\ell q^k_s-\frac{\ell^2}{2}q^k_{ss}+o(\ell^2) )\Big)^2\\
&=\frac{1}{\ell^5}\varepsilon_k\Big( \frac12\ell^3 q^k_{ss}\times q^k_{s} -\frac12\ell^3 q^k_{s}\times q^k_{ss} +o(\ell^3) \Big)^2\\
&=\ell\varepsilon_k\left(q^k_s\times q^k_{ss}\right)^2+o(\ell)\,.
\end{align*}
Similarly,
\begin{align*}
\frac{1}{\ell}H_k(q)&=
\frac{1}{\ell}\mu_k\left(\sin(\ell\omega_ku_k)-\frac{1}{\ell^2}(q_{k+1}-q_{k})\times(q_{k}-q_{k-1})\right)^2\\
&=\frac{1}{\ell}\mu_k\Big(\ell\omega_ku_k-\ell q^k_{s}\times q^k_{ss} +o(\ell) \Big)^2\\
&=\ell\mu_k\left(\omega_ku_k-q^k_s\times q^k_{ss}\right)^2+o(\ell)\,.
\end{align*}
Putting together all the terms, summing on $k=1,...,N$ and taking the formal limit as $N\to\infty$, we obtain that $\mathcal{L}_N$ in \eqref{discretelagrangian} converges to 
\begin{align*}
\mathcal{L}_\infty(q,q_t,\sigma):=\int_0^1\left(\frac12\rho|q_t|^2\right.-\frac12 \sigma(|q_s|^2 - 1)-\frac14\nu\left(|q_{ss}|^2-\omega^2\right)_+^2
\left.-\frac12\varepsilon\left(q_s\times q_{ss}\right)^2
-\frac12\mu\left(\omega u-q_s\times q_{ss}\right)^2
\right)ds\,.\end{align*}
Note that $|q_s|^2=1$ is the continuous counterpart of the inextensibility constraint. Again, since this constraint is imposed exactly via the Lagrange multiplier $\sigma$, 
we can employ it to simplify the bending momentum term in $\mathcal{L}_\infty$. Indeed, differentiating $|q_s|^2=1$ with respect to $s$, we get $q_s\cdot q_{ss}=0$, 
namely $q_s$ and $q_{ss}$ are orthogonal. This implies that $\left(q_s\times q_{ss}\right)^2=|q_s|^2|q_{ss}|^2=|q_{ss}|^2$, and we can finally obtain the following equivalent 
Lagrangian for the continuous tentacle model:
\begin{equation}\label{continuouslagrangian}
\begin{split}\mathcal{L}(q,q_t,\sigma):=\int_0^1\left(\frac12\rho|q_t|^2\right.-\frac12 \sigma(|q_s|^2 - 1)-\frac14\nu\left(|q_{ss}|^2-\omega^2\right)_+^2
\left.-\frac12\varepsilon|q_{ss}|^2
-\frac12\mu\left(\omega u-q_s\times q_{ss}\right)^2
\right)ds\,.\end{split}
\end{equation}
\subsection{Equations of motion}\label{smotion} Here we derive the equations of motion for the continuous octopus tentacle model, 
by applying the classical least action principle to the Lagrangian \eqref{continuouslagrangian}.
To this end, we first assign initial data, i.e. we impose to the tentacle initial position and velocity, by means of smooth functions $q^0, v^0:[0,1]\to\RR^2$:
\begin{equation}\label{initialdata}
q(s,0)=q^0(s)\,,\qquad q_t(s,0)=v^0(s)\,,\qquad\mbox{for }s\in[0,1]\,.
\end{equation}
Moreover, we impose two boundary conditions at one endpoint of the tentacle. Following the discrete model, we defined the first two particles of the approximating chain as 
$q_{-1}=q_0+\ell e_2$ and $q_0=0$. In the first relation we recognize a backward finite difference discretization of the derivative $q_s(0)$, 
while the second relation simply prescribes the anchor point of the tentacle. Hence, we set 
\begin{equation}\label{leftboundary}
q(0,t)=0\,,\qquad q_s(0,t)=-e_2\,,\qquad\mbox{for }t\in[0,T]\,.
\end{equation}
We now define the action 
\begin{equation}\label{action}
\mathcal{S}(q,\sigma)=\int_0^T\mathcal{L}(q,q_t,\sigma)dt\,.
\end{equation}
Note that the unknowns of the problem are both the curve $q$ 
and the tension $\sigma$. According to the least action principle, for $T>0$ fixed, the dynamics of the tentacle in the interval $[0,T]$ and between the states 
$(q(s,0),\sigma(s,0))$ and $(q(s,T),\sigma(s,T))$ is the one for which $\mathcal{S}(q,\sigma)$ is stationary, namely we require
$$
\langle \delta_q\mathcal{S}(q,\sigma),w\rangle:=\lim_{\alpha\to 0}\frac{1}{\alpha}\left(\mathcal{S}(q+\alpha w,\sigma)-\mathcal{S}(q,\sigma)\right)=0\,,
$$
$$
\langle \delta_\sigma\mathcal{S}(q,\sigma),\chi\rangle:=\lim_{\alpha\to 0}\frac{1}{\alpha}\left(\mathcal{S}(q,\sigma+\alpha \chi)-\mathcal{S}(q,\sigma)\right)=0\,,
$$
for all admissible test functions $w:[0,1]\times[0,T]\to\RR^2$ and $\chi:[0,1]\times[0,T]\to\RR$. We remark that the notion of admissible test is related to the boundary conditions in space and time. More precisely, 
the pair $(w,\chi)$ is chosen so that the corresponding variation of $(q,\sigma)$ keep the same boundary conditions. In particular, for $s\in(0,1)$, we impose
$w(s,0)=w(s,T)=0$, $\chi(s,0)=\chi(s,T)=0$, and, in view of \eqref{initialdata}, also $w_t(s,0)=0$. Moreover, in view of \eqref{leftboundary}, we impose 
$w(0,t)=w_s(0,t)=0$ for $t\in(0,T)$. Note that no condition on $(w,\chi)$ is required for $s=1$, since it corresponds to the free end of the tentacle. 
Nevertheless, as we show below, additional boundary conditions will appear at $s=1$ for both $q$ and $\sigma$, as a consequence of the stationarity of the action.

To proceed, we need additional assumptions on some elastic constants appearing in the Lagrangian \eqref{continuouslagrangian}. For some $\varepsilon_0>0$, we require
\begin{equation}\label{epsmu}
\begin{array}{ll}
\varepsilon(s)\geq \varepsilon_0\,,&\mbox{for }s\in[0,1]\,,\\\\
\mu(1,t)=\mu_s(1,t)=0\,,\qquad&\mbox{for }t\in[0,T]\,.
\end{array}
\end{equation}
The first assumption prevents a degeneracy of the equations of motion, in particular at the free end, since $\varepsilon$ has a regularizing effect as discussed below. 
The second assumption is more technical, and allows one to considerably reduce the expression for the boundary conditions. 
In order to simplify the presentation, we also define
\begin{equation}\label{GH}
G[q,\nu,\varepsilon,\omega](s,t):=\varepsilon(s)+\nu(s)\left(|q_{ss}(s,t)|^2-\omega^2(s)\right)_+,
\end{equation}
$$H[q,\mu,u,\omega](s,t):=\mu(s)\left(\omega(s) u(s,t)-q_s(s,t) \times q_{ss}(s,t)\right)\,.$$

Let us start by computing the variation of $\mathcal S$ with respect to $\sigma$ in direction $\chi$, and impose stationarity: we immediately get
$$
\langle\delta_\sigma \mathcal S, \chi \rangle=\int_0^T\int_0^1 (|q_s|^2-1)\chi\,ds\,dt=0\,,
$$
for every admissible test $\chi$. This implies the inextensibility constraint, i.e. 
$$|q_s|^2=1\qquad \mbox{almost everywhere in }(0,1)\times(0,T)\,.$$
We now compute the variation of $\mathcal S$ with respect to $q$ in direction $w$: we have
$$
\langle\delta_q \mathcal S,w\rangle=\int_0^T\int_0^1\Big\{\rho q_t \cdot w_t 
-\sigma q_s \cdot w_s  -G q_{ss}\cdot w_{ss} -H(-w_s\times q_{ss}-q_s\times w_{ss})\Big\}ds\,dt\,.
$$
Integrating by parts each term, we get 
\begin{align*}
\int_0^T\int_0^1\rho q_t \cdot w_t\,ds\,dt=&\int_0^1 \left[\rho q_t \cdot 
w\right]_0^Tds-\int_0^T\int_0^1 \rho q_{tt} \cdot w\,ds\,dt\,,\\
-\int_0^T\int_0^1 \sigma q_s \cdot w_s\,ds\,dt=&-\int_0^T \left[\sigma q_s\cdot 
w\right]_0^1dt+\int_0^T\int_0^1 \left(\sigma q_s\right)_s \cdot w\,ds\,dt\,,\\
-\int_0^T\int_0^1 G q_{ss}\cdot w_{ss}\,ds\,dt=&-\int_0^T \left[ G q_{ss}\cdot w_s\right]_0^1~dt
+\int_0^T \left[ \left(G q_{ss}\right)_s\cdot w\right]_0^1~dt-\int_0^T\int_0^1 \left( G q_{ss}\right)_{ss}\cdot w\,ds\,dt
\end{align*}
and, recalling that $q\times w=q\cdot w^\bot=-q^\bot\cdot w$,
\begin{align*}
-\int_0^T\int_0^1&H (-w_s\times q_{ss}-q_s\times w_{ss})ds\,dt=\int_0^T\int_0^1 (H 
q^\bot_{ss}\cdot w_s - H q^\bot_s\cdot w_{ss})ds\,dt\\
=&\int_0^T \left[H q^\bot_{ss}\cdot w\right]_0^1dt - \int_0^T \left[H 
q^\bot_s\cdot w_s\right]_0^1dt
+ \int_0^T \left[\left(Hq^\bot_s\right)_s\cdot 
w\right]_0^1dt \\
&- \int_0^T\int_0^1 \left(H q^\bot_{ss}\right)_s\cdot w\,ds\,dt- \int_0^T\int_0^1 
\left(Hq^\bot_s\right)_{ss}\cdot w\,ds\,dt\,.
\end{align*}
Summing up, we obtain
\begin{align*}
\langle\delta_q \mathcal S,w\rangle=&\int_0^T\int_0^1 \left\{-\rho q_{tt}+\left(\sigma q_s\right)_s - \left(G q_{ss}\right)_{ss} 
 - \left(H q^\bot_{ss}\right)_s - \left(Hq^\bot_s\right)_{ss}\right\}
                          \cdot w\,ds\,dt\\
&+\int_0^1 \Big[\rho q_t \cdot w\Big]_0^Tds\\
&+\int_0^T \Big[\left\{ -\sigma q_s+\left( G q_{ss}\right)_s
+H q^\bot_{ss}+\left(Hq^\bot_s\right)_s\right\}\cdot w-\left\{ G q_{ss}+H q^\bot_s\right\}\cdot 
w_s\Big]_0^1~dt\,.
\end{align*}
Using the admissibility conditions $w(s,0)=w(s,T)=w(0,t)=w_s(0,t)=0$ and imposing stationarity, we finally get 
\begin{align*}
 \langle\delta_q \mathcal S,w\rangle=&\int_0^T\int_0^1 \left\{-\rho q_{tt}                   +\left(\sigma q_s\right)_s 
                          - \left( G q_{ss}\right)_{ss} 
                          - \left(H q^\bot_{ss}\right)_s 
                          - \left(Hq^\bot_s\right)_{ss}
                          \right\}\cdot w\,ds\,dt
\\&+\int_0^T \Big( \left\{ -\sigma q_s+\left( G q_{ss}\right)_s+H 
q^\bot_{ss}+\left(Hq^\bot_s\right)_s\right\}\cdot w (1,t) - \left\{ G q_{ss}+H q^\bot_s\right\}\cdot w_s (1,t)\Big)dt=0\,.
\end{align*}
Now, choosing $w$ with compact support in $(0,1)\times(0,T)$, we have in particular $w(1,t)=w_s(1,t)=0$, so that 
the first term in the previous expression should vanish. By the arbitrariness of $w$ we obtain, almost everywhere, the equations of motion
$$\rho q_{tt}=\left(\sigma q_s-H q_{ss}^\bot\right)_s -\left(G q_{ss}+H q_{s}^\bot\right)_{ss}\,.$$
On the other hand, choosing $w$ such that $w(1,t)=0$ and $w_s(1,t)$ is arbitrary, we get the following boundary condition at $s=1$:
$$G q_{ss}+H q^\bot_s=0\,.$$
Note that $H$ vanishes due to the assumption $\mu(1,t)=0$. Moreover, $G$ is strictly positive due to the assumption $\varepsilon(s)\ge\varepsilon_0>0$. Then, we obtain
$$q_{ss}(1,t)=0\qquad \mbox{for }t\in(0,T)\,.$$
Similarly, choosing $w$ such that $w_s(1,t)=0$ and $w(1,t)$ is arbitrary, we also get 
$$-\sigma q_s+G_s q_{ss}+G q_{sss}+H q^\bot_{ss}+H_sq^\bot_s+Hq^\bot_{ss}=0\,,$$
where we expanded, for convenience, the derivatives of the two products. Using again the assumption $\mu(1,t)=0$, and also $\mu_s(1,t)=0$, we observe that both 
$H$ and $H_s$ vanish at $s=1$. Moreover, we already know that $q_{ss}(1,t)=0$, hence by definition of $G$ we get
\begin{equation}\label{zeroshearstress}
-\sigma q_s+\varepsilon q_{sss}=0
\end{equation}
and, dot multiplying by $q_s$, also
\begin{equation}\label{zeroshearstressscalar}
-\sigma |q_s|^2+\varepsilon q_s\cdot q_{sss}=0\,.
\end{equation}
We now employ the inextensibility constraint $|q_s|^2=1$ obtained above. Differentiating twice with respect to $s$, 
we obtain $q_s\cdot q_{ss}=0$ and $q_s\cdot q_{sss}=-|q_{ss}|^2$ respectively. Prolonging these relations by continuity up to $s=1$, 
and plugging them into \eqref{zeroshearstressscalar}, we get
$$
-\sigma-\varepsilon |q_{ss}|^2=0\,,
$$
which implies $\sigma(1,t)=0$ for $t\in(0,T)$, using again $q_{ss}(1,t)=0$. 
Coming back to \eqref{zeroshearstress}, by the assumption $\varepsilon(s)>0$, we conclude that
$$q_{sss}(1,t)=0\qquad \mbox{for }t\in(0,T)\,.$$
Putting together all the initial and boundary conditions with the necessary conditions derived above, we end up with the following strong formulation of the 
system of motion for the tentacle model:
\begin{equation}\label{tentaclemotion}
\left\{\begin{array}{ll}
                  \rho q_{tt}=\left(\sigma q_s-H q_{ss}^\bot\right)_s -\left(G q_{ss}+H q_{s}^\bot\right)_{ss}&\mbox{in }(0,1)\times(0,T)\\
                  |q_s|^2=1 &\mbox{in }(0,1)\times(0,T)\\
                  q(s,0)=q^0(s) & s\in(0,1)\\
                  q_t(s,0)=v^0(s) & s\in(0,1)\\
                  q(0,t)=0 & t\in(0,T)\\
                  q_s(0,t)=-e_2 & t\in(0,T)\\
                  q_{ss}(1,t)=0 & t\in(0,T)\\
                  q_{sss}(1,t)=0 &t\in(0,T)\\
                  \sigma(1,t)=0 &t\in(0,T)
               \end{array}
\right.
\end{equation}
We remark that the relations at the free endpoint are known respectively as the zero bending momentum, zero shear stress and zero tension boundary conditions. 
They provide, with the anchor point and the fixed tangent at $s=0$, the so called {\em cantilevered} boundary conditions for the classical 
nonlinear Euler-Bernoulli beam discussed in the Introduction. We also observe that the equation is of order four in space. The positive term $\varepsilon$ in $G$ plays 
the role of a regularization and prevents the equation to degenerate to the third order or, worse, to the second order if also $H=0$ somewhere or, worse than worse, 
to the first order at $s=1$ where $\sigma=0$.

To conclude this section, let us spend few words about dissipation for the tentacle model. We can introduce two kinds of dissipation mechanisms: the first one is 
an environmental viscous friction proportional to the velocity, modeling the fact that the tentacle can be immersed in a fluid; 
the second one is an internal viscous friction, associated to the muscles 
contractions and hence proportional to the change in time of the tentacle curvature. It is very well known that such dissipative forces do not fit the standard variational framework of Lagrangian mechanics, they 
can enter in the equations of motion only formally, introducing a suitable Rayleigh dissipation functional of the form  $\int_0^1\mathcal{R}(q_t(s))ds$, 
and defining, ad hoc, the variation of the action \eqref{action} (note that the variation of $\mathcal{R}$ is performed with respect to $q_t$ and not $q$!):
$$
\langle \delta_q\mathcal{S},w\rangle:=\int_0^T\left(\langle \delta_q \mathcal{L},w\rangle-\langle \delta_{q_t} \mathcal{R},w\rangle\right)dt\,.
$$
Here we choose 
$$
\mathcal{R}(q_t)=\int_0^1\left(\beta(s)|q_t|^2+\gamma(s)|q_{sst}|^2\right)ds\,,
$$
where $\beta$ and $\gamma$ are positive functions playing the role of viscous friction coefficients, typically proportional to the local surface area. 
Since our tentacle model is thick at the base and thin at the tip, we can assume, as for the bending parameters $\nu, \varepsilon, \mu$, that also $\beta$ and $\gamma$ are decreasing in $s$. 
Imposing stationarity of the action above, using integration by parts and the boundary conditions in \eqref{tentaclemotion}, 
we easily obtain the corresponding frictional forces in the right hand side of the equations of motion: 
\begin{equation}\label{tentaclemotionfriction}
\rho q_{tt}=\left(\sigma q_s-H q_{ss}^\bot\right)_s -\left(G q_{ss}+H q_{s}^\bot\right)_{ss}-\beta q_t-\gamma q_{sssst}\,.
\end{equation}
This dissipative version of the model will be employed in the following numerical simulations, in order to observe the equilibrium configurations of the system.  
\subsection{Equilibria}\label{sstationary}
Here we characterize the equilibria of \eqref{tentaclemotion}, namely we provide, up to an ordinary integration in space, an explicit solution $(q,\sigma)$ to the stationary problem
\begin{equation}\label{tentacleequilibrium}
\left\{\begin{array}{ll}
                  \left(\sigma q_s-H q_{ss}^\bot\right)_s -\left(G q_{ss}+H q_{s}^\bot\right)_{ss}=0&\mbox{in }(0,1)\\
                  |q_s|^2=1 &\mbox{in }(0,1)\\
                  q(0)=0 \\
                  q_s(0)=-e_2 \\
                  q_{ss}(1)=0 \\
                  q_{sss}(1)=0 \\
                  \sigma(1)=0 
               \end{array}
\right.
\end{equation}
in terms of a given stationary control map $u:[0,1]\to[-1,1]$. Indeed, we prove the following proposition.
\begin{proposition}\label{pequilibria}
Let $u\in C^2[0,1]$ and assume $\mu(1)=\mu_s(1)=0$, $\varepsilon(s)>0$. Then the stationary problem \eqref{tentacleequilibrium} 
admits a unique solution $(q,\sigma)\in C^4([0,1])\times C^1([0,1])$, which is given, for $s\in[0,1]$, by 
\begin{equation}\label{control2state}
q(s)=\int_0^s\Big(\sin(\theta(\xi)),-\cos(\theta(\xi))\Big)d\xi\,,\qquad \sigma(s)=\varepsilon(s) \Big(\bar\omega(s) u(s)\Big)^2\,,
\end{equation}
where 
$$\theta(\xi):=\int_0^\xi \bar\omega(z) u(z)\,dz\qquad\mbox{and}\qquad \bar\omega(s) :=\frac{\mu(s) \omega(s)}{\mu(s)+\varepsilon(s)}\,.$$ 
\end{proposition}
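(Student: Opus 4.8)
The strategy is to use the exact inextensibility constraint to convert the fourth-order vector system into a scalar problem for the turning angle. Since $|q_s|^2=1$ on $[0,1]$ and $q_s(0)=-e_2$, I would set $\theta(s):=\int_0^s q_s(z)\times q_{ss}(z)\,dz$; a standard lifting argument then gives $q_s=(\sin\theta,-\cos\theta)$, $q_{ss}=\theta'(\cos\theta,\sin\theta)$, and $q_s\times q_{ss}=\theta'$, so the signed curvature equals $\theta'$. Working in the orthonormal moving frame $\{q_s,q_s^\bot\}$ one records the pointwise identities $q_{ss}=-\theta'q_s^\bot$, $q_{ss}^\bot=\theta'q_s$, $q_{sss}=-\theta''q_s^\bot-(\theta')^2q_s$, and notes that $G$ and $H$ from \eqref{GH} become the scalar functions $G=\varepsilon+\nu((\theta')^2-\omega^2)_+$ and $H=\mu(\omega u-\theta')$.

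Next I would integrate the equation of motion in \eqref{tentacleequilibrium} once in $s$: writing it as $(A-B_s)_s=0$ with $A=\sigma q_s-Hq_{ss}^\bot$ and $B=Gq_{ss}+Hq_s^\bot$, we get $A-B_s\equiv C$ for a constant vector $C$. Evaluating at $s=1$ and using the free-end conditions $q_{ss}(1)=q_{sss}(1)=0$, $\sigma(1)=0$ together with $\mu(1)=\mu_s(1)=0$ (which force $H(1)=H_s(1)=0$), every term vanishes, so $C=0$. Projecting the relation $\sigma q_s-Hq_{ss}^\bot-(Gq_{ss}+Hq_s^\bot)_s=0$ onto $q_s$ and $q_s^\bot$ and using the frame identities, the $q_s^\bot$-component reduces to $(G\theta'-H)_s=0$ and the $q_s$-component to $\sigma=2H\theta'-G(\theta')^2$.

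From $(G\theta'-H)_s=0$ and the values at $s=1$ (where $\theta'(1)=0$ since $q_{ss}(1)=0$, and $H(1)=0$) one gets $G\theta'=H$ on $[0,1]$; for each fixed $s$ this is the scalar equation $g(\theta')=\mu\omega u$ with $g(x):=(\varepsilon+\mu+\nu(x^2-\omega^2)_+)\,x$. Since $\varepsilon+\mu\ge\varepsilon_0>0$ by \eqref{epsmu} and $x\mapsto\nu(x^2-\omega^2)_+x$ is odd and nondecreasing on $[0,\infty)$, hence nondecreasing on $\RR$, the map $g$ is a continuous strictly increasing bijection of $\RR$, so the equation has a unique root. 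A direct check shows $\theta'=\bar\omega u$ is that root: since $|\bar\omega u|\le\bar\omega\le\omega$ (as $|u|\le1$ and $\mu/(\mu+\varepsilon)\le1$) we have $((\theta')^2-\omega^2)_+=0$, so $G\equiv\varepsilon$, $H=\mu(\omega-\bar\omega)u=\varepsilon\bar\omega u$, and $(\varepsilon+\mu)\bar\omega u=\mu\omega u$. Consequently $\theta(s)=\int_0^s\bar\omega(z)u(z)\,dz$ (using $\theta(0)=0$), $q(s)=\int_0^s(\sin\theta(\xi),-\cos\theta(\xi))\,d\xi$ (using $q(0)=0$), and $\sigma=2\varepsilon(\theta')^2-\varepsilon(\theta')^2=\varepsilon(\bar\omega u)^2$, which is exactly \eqref{control2state}. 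This yields uniqueness; for existence one checks that the triple defined by \eqref{control2state} solves \eqref{tentacleequilibrium} — the conditions at $s=1$ hold because $\bar\omega(1)=\bar\omega_s(1)=0$ (a consequence of $\mu(1)=\mu_s(1)=0$), and the equation of motion holds because $A-B_s$ is identically zero by construction. The regularity claim is then immediate: $\bar\omega\in C^2$ since $\mu+\varepsilon\ge\varepsilon_0$, so $\theta'=\bar\omega u\in C^2$, whence $q\in C^4$ and $\sigma=\varepsilon(\bar\omega u)^2\in C^1$.

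I expect the main obstacle to be the sign and orthogonal-complement bookkeeping in the frame projection — correctly obtaining the two scalar equations $(G\theta'-H)_s=0$ and $\sigma=2H\theta'-G(\theta')^2$ — together with the argument that the integration constant vanishes, which is precisely where all the free-end boundary conditions and the technical hypotheses $\mu(1)=\mu_s(1)=0$ are used.
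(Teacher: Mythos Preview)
Your argument is correct and arrives at the same formulas, but the route differs from the paper's in the step that determines the curvature. The paper invokes the variational origin of \eqref{tentacleequilibrium}: writing $q_{ss}=\alpha\,q_s^\bot$, it minimizes the potential integrand $f(\alpha)=\tfrac14\nu(\alpha^2-\omega^2)_+^2+\tfrac12\varepsilon\alpha^2+\tfrac12\mu(\omega u-\alpha)^2$ pointwise, obtaining $\alpha=\bar\omega u$ as the unique critical point of $f$. You instead stay at the level of the Euler--Lagrange PDE: after integrating once and killing the constant with the free-end data, you project onto the moving frame $\{q_s,q_s^\bot\}$ and obtain the two scalar relations $(G\theta'-H)_s=0$ and $\sigma=2H\theta'-G(\theta')^2$. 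The first, combined with the endpoint values, yields $G\theta'=H$, which is exactly $f'(\theta')=0$; your monotonicity argument for $g(x)=(\varepsilon+\mu+\nu(x^2-\omega^2)_+)x$ then replaces the paper's convexity of $f$. So the two proofs solve the same scalar equation but reach it differently: the paper's pointwise minimization is shorter and more transparent about why the curvature constraint is inactive, while your PDE-projection approach is more self-contained (it does not appeal to the statement ``a solution is a minimum of the potential'') and simultaneously delivers the formula for $\sigma$ as the $q_s$-component of the same first integral. For $\sigma$, the paper proceeds exactly as you do: integrate once, use the boundary data at $s=1$ to annihilate the constant, and dot with $q_s$.
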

\begin{proof}
First of all, we observe that a solution to \eqref{tentacleequilibrium} is, by construction, a minimum point of the potential 
\begin{equation}\label{potential}
\mathcal{V}(q)=\int_0^1\left(\frac14\nu\left(|q_{ss}|^2-\omega^2\right)_+^2
+\frac12\varepsilon|q_{ss}|^2
+\frac12\mu\left(\omega u-q_s\times q_{ss}\right)^2
\right)ds\,.
\end{equation}
Differentiating the inextensibility constraint $|q_s|^2=1$, we obtain the orthogonality condition $q_s\cdot q_{ss}=0$ and hence, for every $s\in(0,1)$, there exists 
$\alpha\in\RR$ such that $q_{ss}=\alpha q_s^\bot$, since $q_s$ and $q_s^\bot$ define a local base for $\RR^2$. Moreover, we have $|q_{ss}|=|\alpha|$ and 
$q_s\times q_{ss}=\alpha$. It follows that the minimization of $\mathcal{V}$ in \eqref{potential} can be performed pointwise, namely minimizing, for every $s\in(0,1)$, 
the following function of $\alpha$: 
$$
f(\alpha)=\frac14\nu\left(\alpha^2-\omega^2\right)_+^2+\frac12\varepsilon\alpha^2+\frac12\mu\left(\omega u-\alpha\right)^2\,. 
$$
We easily obtain that
$$
f^\prime(\alpha)=\alpha\nu\left(\alpha^2-\omega^2\right)_+  +\varepsilon\alpha-\mu\left(\omega u-\alpha\right)=0
$$
if and only if
$$
\alpha=\frac{\mu\omega u}{\mu+\varepsilon+\nu\left(\alpha^2-\omega^2\right)_+}\,.
$$
Recalling that $\varepsilon>0$ and $u\in[-1,1]$, we have $|\alpha|<\omega$ and then $\left(\alpha^2-\omega^2\right)_+=0$. We conclude that the unique minimum point of $f$ is
$$
\alpha=\frac{\mu\omega u}{\mu+\varepsilon}=\bar\omega u\,.
$$
Hence, the stationary problem \eqref{tentacleequilibrium} is equivalent to the following second order problem: find $q$ such that
\begin{equation}\label{reducedstationary}
\quad\left\{\begin{array}{ll}
                  q_{ss}=\bar \omega u q_s^\bot&\mbox{in }(0,1)\\
                  |q_s|^2=1 &\mbox{in }(0,1)\\
                  q(0)=0\\q_{s}(0)=-e_2
                 \end{array}
\right. 	\end{equation}
Note that, due to the assumption $\mu(1)=\mu_s(1)=0$, the conditions $q_{ss}(1)=q_{sss}(1)=0$, although irrelevant, are now embedded in the first equation of \eqref{reducedstationary}. 
We proceed by introducing polar coordinates such that 
$$q_s(s)=\Big(\sin(\theta(s)),-\cos(\theta(s))\Big)\,,$$
where $\theta:[0,1]\to\RR$ is unknown. It follows that $|q_s|^2=1$, while $q_s(0)=-e_2$ is equivalent to $\theta(0)=0$. Moreover, 
differentiating in $s$, we obtain 
$$q_{ss}(s)=\theta_s(s)\Big(\cos(\theta(s)),\sin(\theta(s))\Big)=\theta_s(s)q_s^\bot(s)\,,$$
then we end up with the following Cauchy problem in $\theta$:
$$
\left\{\begin{array}{ll}
                  \theta_s=\bar\omega u &\mbox{in }(0,1)\\
                  \theta(0)=0\\
               \end{array}
\right.
$$
The unique solution is given by
$$
\theta(s)=\int_0^s \bar\omega(z) u(z)\,dz\,,
$$
and, integrating $q_s$, we conclude
$$
q(s)=\int_0^s\Big(\sin(\theta(\xi)),-\cos(\theta(\xi))\Big)d\xi\,,
$$
where, for $s=0$, we also recover the condition on the anchor point $q(0)=0$. 

We now recall the differential equation in the stationary problem \eqref{tentacleequilibrium}, 
$$
 \left(\sigma q_s-H q_{ss}^\bot\right)_s -\left(G q_{ss}+H q_{s}^\bot\right)_{ss}=0\,.
$$
Integrating in $s$ and expanding the second term, we get
$$
 \sigma q_s-2H q_{ss}^\bot -H_s q_{s}^\bot -G q_{sss}-G_s q_{ss}=C\,,
$$
for some constant vector $C\in\RR^2$. By continuity, for $s\to 1$, using the boundary conditions $\sigma(1)=H(1)=H_s(1)=0$ and $q_{ss}(1)=q_{sss}(1)=0$, 
we easily get $C=0$. 
Moreover, dot multiplying by $q_s$ and using $|q_s|^2=1$, $q_{s}\cdot q_{ss}=0$ and $q_s\cdot q_{sss}=-|q_{ss}|^2$, we get
$$
 \sigma -2H q_s\times q_{ss}+G |q_{ss}|^2=0\,.
$$
Finally, using the definitions of $G, H, \bar\omega$ and substituting $q_s\times q_{ss}=\bar\omega u$, $|q_{ss}|^2=(\bar\omega u)^2$, we conclude
$$
\sigma =2\mu(\omega u- \bar\omega u) \bar\omega u -\varepsilon (\bar\omega u)^2=
2\mu\left(\frac{\mu+\varepsilon}{\mu}- 1\right) (\bar\omega u)^2 -\varepsilon (\bar\omega u)^2=\varepsilon (\bar\omega u)^2\,,
$$
and this completes the proof. 
\end{proof}

\subsection{Discretization}\label{Sec:Discretization}
In this section we give some details on the numerical approximation and solution of the system of motion \eqref{tentaclemotion}, 
in particular we numerically validate the model and the characterization of its equilibria provided by Proposition \ref{pequilibria}. 
We introduce a uniform discretization of the space-time $[0,1]\times[0,T]$, namely we consider integers $N,M$ and define grid nodes $s_k=k\Delta s$ for $k=0,...,N$ and 
$t_n=n\Delta t$ for $n=0,...,M$, where the discretization steps are given respectively by $\Delta s=1/N$ and $\Delta t=T/M$. For a generic scalar or vector function 
$\chi$ defined on $[0,1]\times[0,T]$ we adopt the standard notation $\chi_k^n\approx\chi(s_k,t_n)$ to identify the corresponding approximation. 
In particular, we suitably define the approximations of all the data functions $\rho, \varepsilon, \nu, \omega, \mu, u, q^0, v^0$ appearing in \eqref{tentaclemotion}.

Space discretization for the derivatives of the unknowns $q,\sigma$ is performed by employing standard finite difference operators. 
Denoting by $\Dm$,$\Dp$ respectively first order backward and forward differences, and by $D^2_c$ second order central differences, we introduce the following 
approximation of the accelerations in the 
equations of motion:
$$
a(q_k^n,\sigma_k^n):=\displaystyle\frac{1}{\rho_k}\left(\Dp\left(\sigma_k^n \Dm q_k^n-H_k^n D^2_cq_{k}^{n\bot}\right) -D^2_c\left(G_k^n D^2_cq_{k}^n+H_k^n \Dm q_{k}^{n\bot}\right)\right)
$$
with 
$$G_k^n=\varepsilon_k+\nu_k\left(|D^2_c q_k^n|^2-\omega_k^2\right)_+,\qquad H_k^n=\mu_k\left(\omega_k u_k^n-\Dm q_k^n \times D^2_c q_k^n\right)\,.$$
Note that the tangent $q_s$ is approximated by backward differences, whereas other first order derivatives are approximated by forward differences. 
This choice clearly gives an overall approximation of order one in space for the equations of motion and naturally comes from the discrete model. 
More precisely, the approximations $a(q_k^n,\sigma_k^n)$ coincide with the accelerations of the particle system discussed in Section \ref{sdiscrete}: 
they can be derived from the least action principle directly applied to the discrete Lagrangian $\mathcal{L}_N$ in \eqref{discretelagrangian}. 

Let us take a look at the boundary conditions. Introducing two ghost nodes $s_{-1}=-\Delta s$ and $s_{N+1}=(N+1)\Delta s$ at the ends of the interval $[0,1]$, we set  
$$
\left\{\begin{array}{ll}
                  q_{N+1}^n-2q_N^n+q_{N-1}^n=0 & \mbox{zero bending moment}\\
                   q_{N+1}^n-3q_N^n+3q_{N-1}^n-q_{N-2}^n=0  & \mbox{zero shear stress}\\
                    \sigma_N^n=0  & \mbox{zero tension}\\
                                q_0^n=0 & \mbox{anchor point}\\
                  q_0^n-q_{-1}^n=-e_2\Delta s & \mbox{fixed tangent}
               \end{array}
\right.
$$
We remark that, from the first two (linear) equations, we readily get the values of $q_N^n$ and $q_{N+1}^n$ in terms of the internal nodes, 
while from the last equation we obtain 
a fixed value for the node $q_{-1}^n$.

Finally, we proceed with the time integration, by employing a classical Velocity Verlet scheme. 
This choice is mainly motivated by the symplectic nature of the scheme, in particular 
its ability to preserve energy regardless of the duration of the simulation. 
For $k=1,...,N-1$ and $n=1,...,M-1$, starting from $q_k^0=q^0(s_k)$ and $v_k^0=v^0(s_k)$, we discretize the equations of motion and the inextensibility constraints as 
\begin{equation}\label{verlet}
\left\{\begin{array}{l}
q_k^{n+1}=q_k^n+v_k^n\Delta t+\displaystyle\frac12 a(q_k^n,\sigma_k^n) \Delta t^2 \\
|\Dm q_k^{n+1}|^2=1
\end{array}
\right.
\end{equation}
and we update the velocities by setting
$$v_k^{n+1}=v_k^n+\displaystyle\frac12\Big(a(q_k^n,\sigma_k^n)+a(q_k^{n+1},\sigma_k^n)\Big)\Delta t\,.$$
In the dissipative case, the frictional forces appearing in \eqref{tentaclemotionfriction} can be included in $a(q_k^n,\sigma_k^n)$ using the following approximations:
$$
-\beta(s_k) q_t(s_k,t_n)\approx -\beta_k v_k^n\,,\qquad -\gamma(s_k) q_{sssst}(s_k,t_n)\approx -\gamma_k D^2_c D^2_c v_k^n\,.
$$
Note that, for every time step $n=0,...,M-1$, the system \eqref{verlet} consists in $3(N-1)$ equations in the unknowns $(q_k^{n+1},\sigma_k^n)_{k=1,...,N-1}$. 
Due to the definition of $a(q_k^n,\sigma_k^n)$, the first $2(N-1)$ equations are linear, whereas the last $N-1$ are nonlinear, and a solution can be iteratively computed 
by means of a standard Newton method.\\

\noindent Now, let us fix the parameters for the simulation. We set $\Delta s=2\times 10^{-2}$ (corresponding to $N=50$ joints), $\Delta t=10^{-4}$ and
$$\rho(s)=\exp(-s),\qquad \varepsilon(s)=10^{-3}(1-0.9s),\qquad \nu(s)=10^{-3}(1-0.09s),\qquad \omega(s)=2\pi(1+s^2),$$
$$\mu(s)=(1-s) \exp(-0.1s^2/(1-s^2)),\qquad \beta(s)=4-s,\qquad \gamma(s)=10^{-6}(4-s)\,.$$
We observe that $\varepsilon$ satisfies the assumption $\varepsilon(1)=10^{-4}>0$, whereas 
$\mu$ is just a small perturbation of a linear function satisfying the assumption $\mu(1)=\mu_s(1)=0$. 
We choose the fully extended initial profile $q^0(s)=-e_2\,s$, the initial velocity $v^0(s)\equiv 0$ and the control map $u(s,t)\equiv 1$, corresponding to a 
full contraction of the tentacle, constant in time. Figure \ref{tentacledynamics} shows the evolution computed by the numerical scheme in the time interval $[0,12]$. 
In each frame we represent the tentacle using a tube of width proportional to $\rho$, to give a qualitative idea of the non uniform mass distribution. 
Moreover, we show the different configurations in gray scales, from light gray to black as the time increases. 
\begin{figure}[!h]
\centering
\begin{tabular}{cccc}
 \includegraphics[scale=0.45]{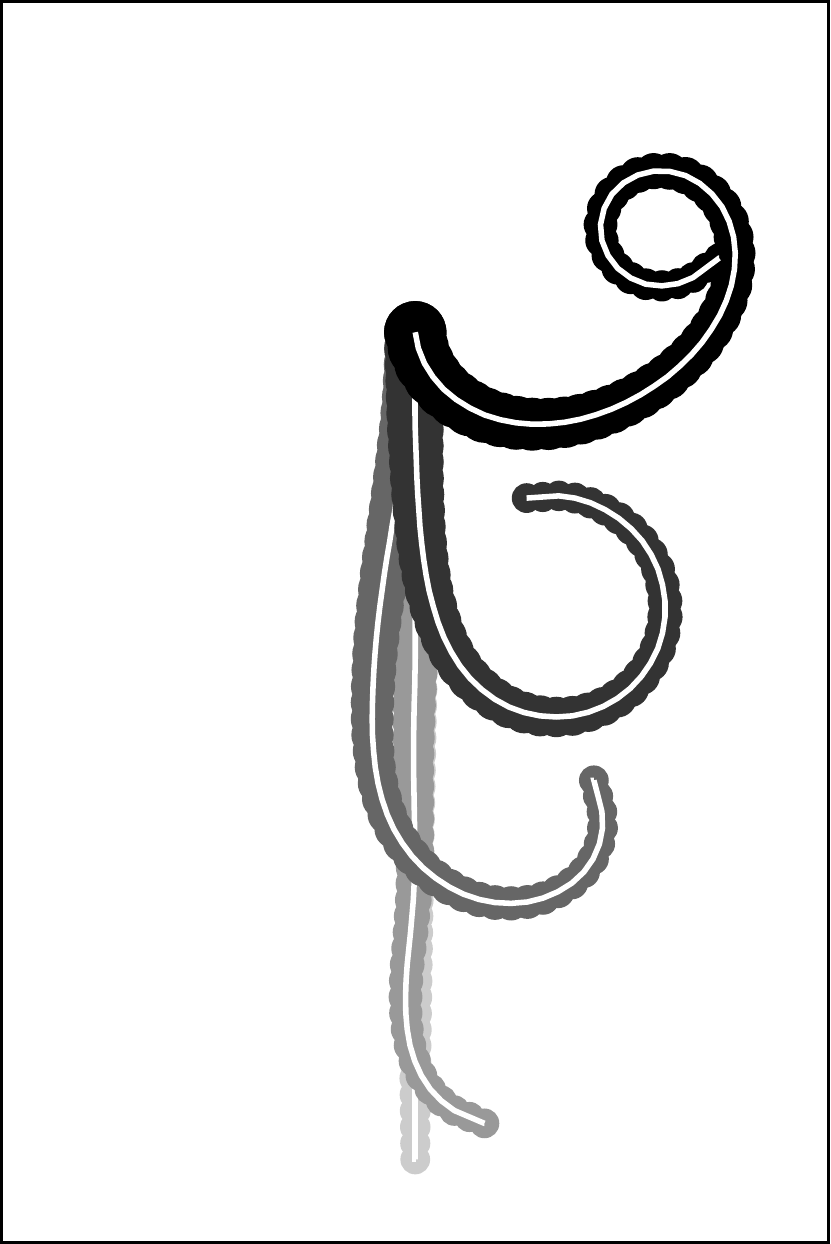}&
 \includegraphics[scale=0.45]{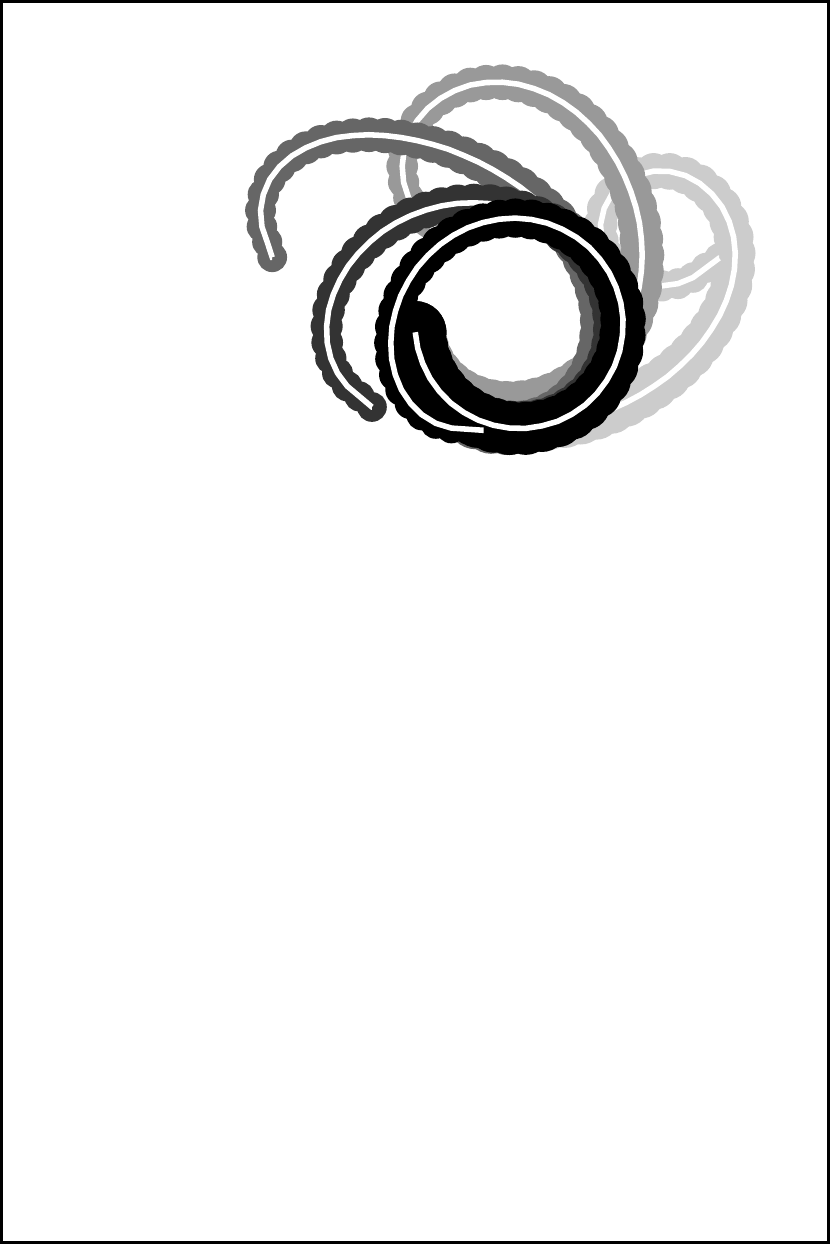}&
 \includegraphics[scale=0.45]{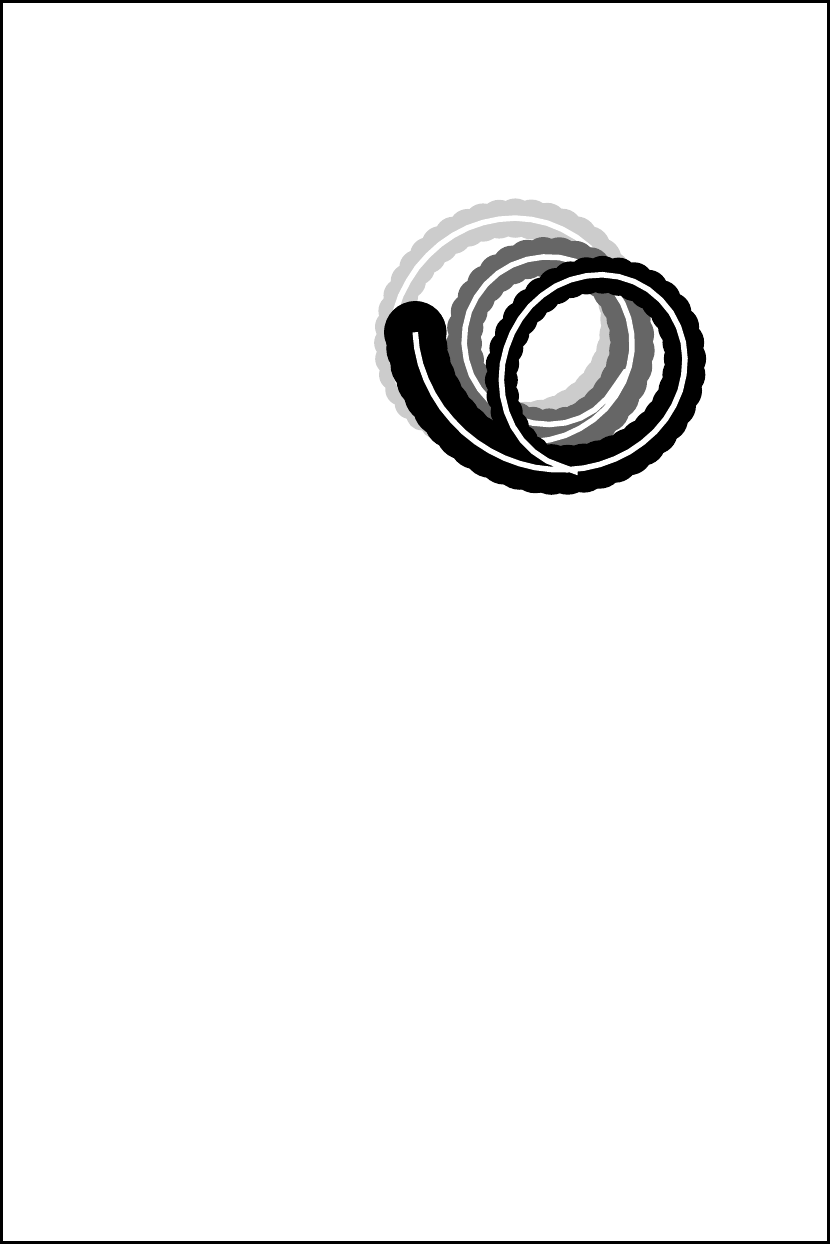}&
 \includegraphics[scale=0.45]{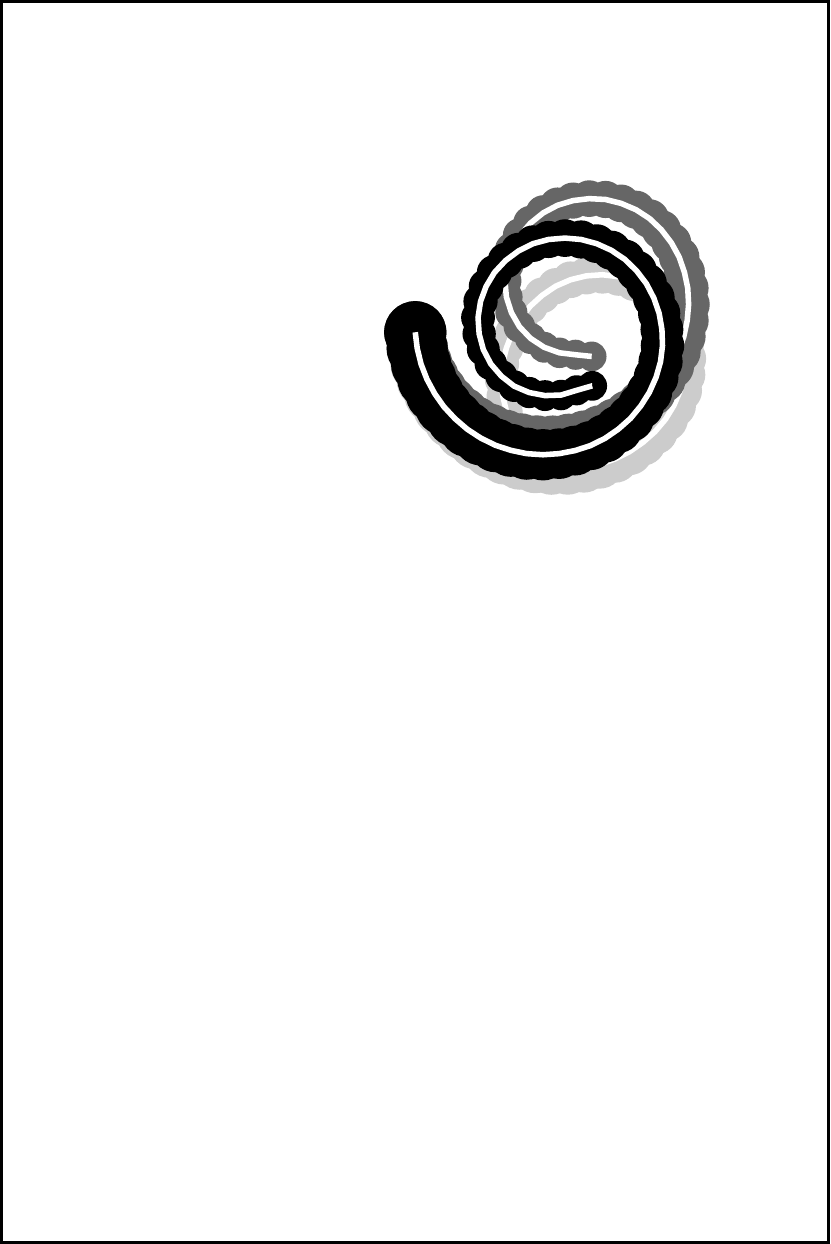}\\
 $0\le t \le 0.35$ &$0.35\le t \le 0.8$ &$0.8\le t \le 1.5$ &$1.5\le t \le 12$ 
\end{tabular}
\caption{tentacle dynamics for a full contraction control}\label{tentacledynamics}
\end{figure}

\noindent The effect of the curvature constraints is clearly visible, 
while the presence of frictional forces guarantees convergence to the equilibrium configuration given in Proposition \ref{pequilibria}. 
This is confirmed in Figure \ref{errors-curvature-tension}, showing the asymptotic behavior of 
$\mathcal{E}_{q}(n)=\||D^2_c q^n|-\bar\omega\|_\infty$ and $\mathcal{E}_{\sigma}(n)=\|\sigma^n -\varepsilon\bar\omega^2\|_\infty$, respectively 
the error between the curvature at time $t_n=n\Delta t$ and the equilibrium curvature, and the error between the tension at time $t_n$ and the equilibrium tension.
\begin{figure}[!h]
\centering
\begin{tabular}{cc}
 \includegraphics[width=0.3\textwidth]{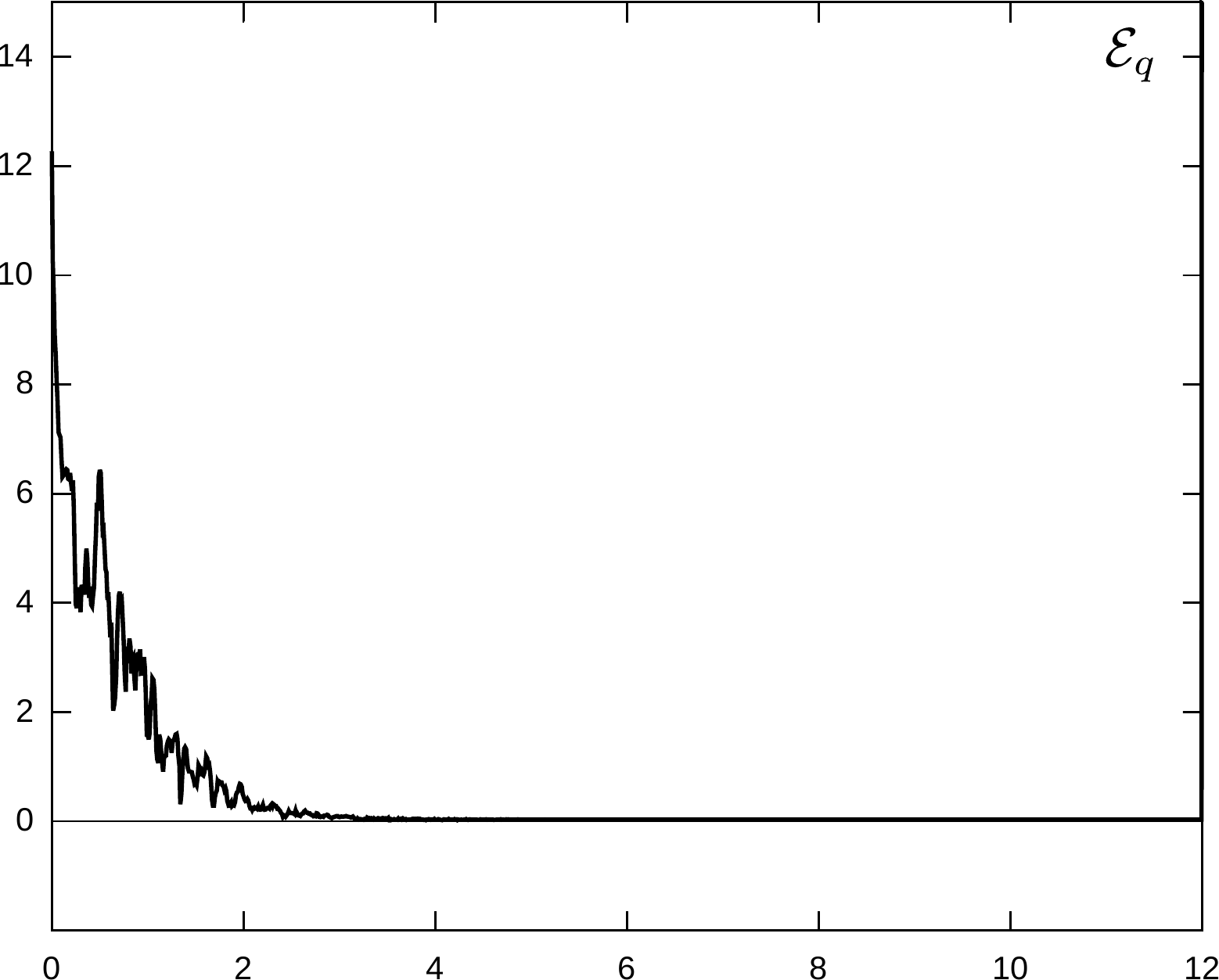}&
 \includegraphics[width=0.3\textwidth]{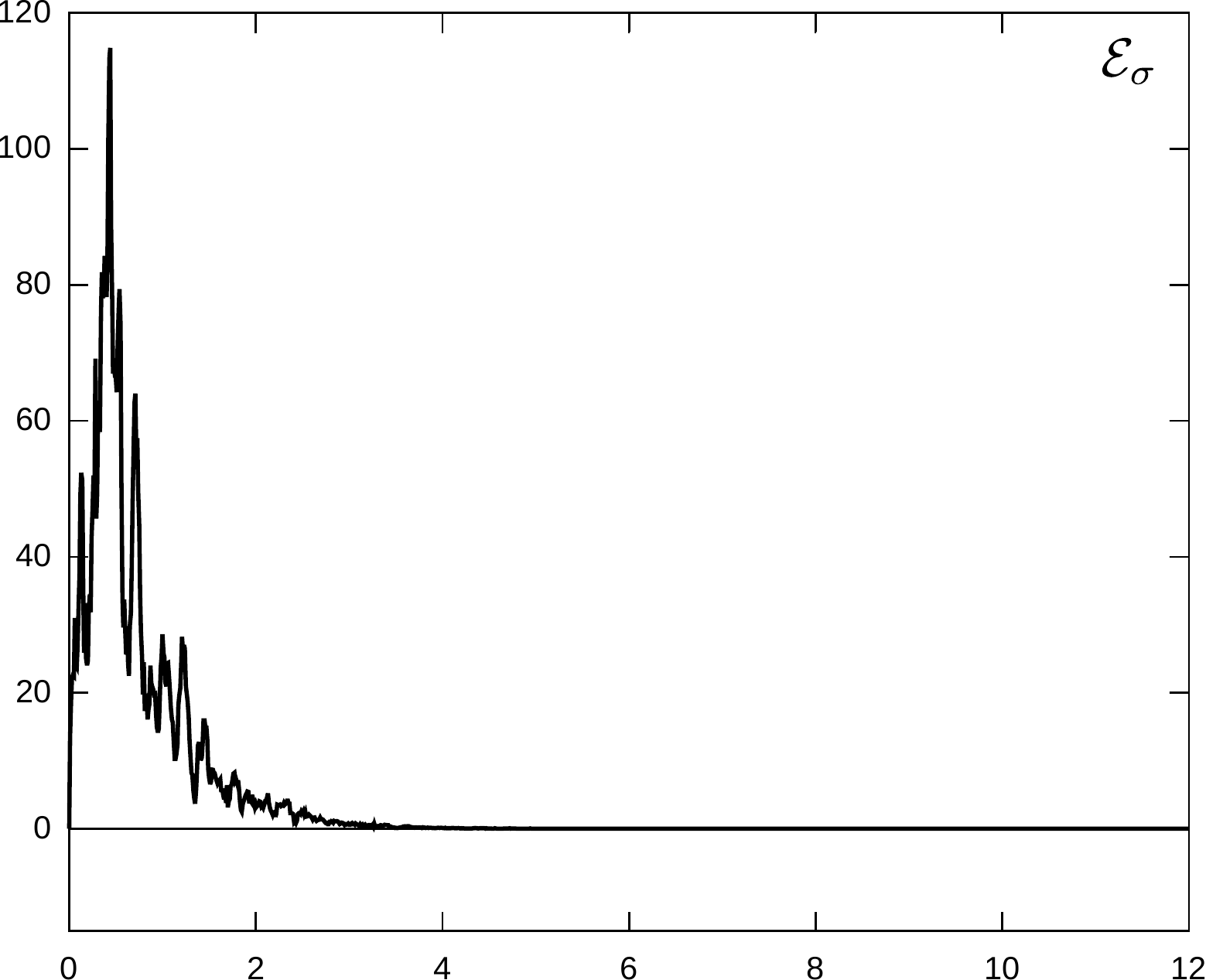}\\
 $(a)$& $(b)$
\end{tabular}
\caption{Asymptotic behavior of curvature error $\mathcal{E}_{q}$ (a) and tension error $\mathcal{E}_{\sigma}$ (b)}\label{errors-curvature-tension}
\end{figure}

\section{The stationary optimal control problem}\label{Sec:Static}
In this section we address the static optimal control problem discussed in the Introduction. 
The goal is to touch a point with the tentacle tip and minimum effort. More precisely, 
given $q^\ast\in\RR^2$ and $\tau>0$, we consider the following optimization problem:
\begin{equation}\label{touchfunctional}
\min\left\{\frac12\int_0^1 u^2 ds+\frac{1}{2\tau}|q(1)-q^\ast|^2\right\}\quad\mbox{subject to}\quad\left\{\begin{array}{ll}
                  q_{ss}=\bar \omega u q_s^\bot &\mbox{in }(0,1)\\
                  |q_s|^2=1 &\mbox{in }(0,1)\\
                  |u|\leq 1 &\mbox{in }(0,1)\\
                  q(0)=0\\q_{s}(0)=-e_2
                 \end{array}
\right. 
\end{equation}
where the first term in the functional quantifies the activation of the tentacle muscles in terms of the control, 
while the second term penalizes the distance of the tentacle tip from the target point $q^\ast$. We observe that, 
by virtue of Proposition \ref{pequilibria}, we have an explicit characterization of the constraints appearing in  \eqref{touchfunctional} in terms of the control. 
In the context of optimal control, formula \eqref{control2state} provides the so called {\em control to state} map, 
and it can be employed to completely remove $q$ from the optimization problem above, yielding the following optimization in the control only:
$$\min_{|u|\leq 1}\left\{\frac12\int_0^1 u^2 ds+\frac{1}{2\tau}\left|\int_0^1\left(\sin\left(\int_0^s \bar\omega(\xi) u(\xi)\,d\xi\right),
-\cos\left(\int_0^s \bar\omega(\xi) u(\xi)\,d\xi\right)\right)ds-q^\ast\right|^2\right\}\,.$$
Note that the inextensibility constraint is now hidden in the integral formulation but, from a numerical point of view, we found that this problem is quite involved, mainly due to the fact that, in polar coordinates, 
the tentacle tip $q(1)$ is obtained by integration on the whole curve. This results in a non local dependency of the functional on the control, 
so that the corresponding Euler-Lagrange equation is in turn an integral equation in $u$. Then, we decided to follow the opposite approach, namely remove 
the dependency on the control, using the relation $|q_{ss}|=\bar \omega |u|$. This gives the following equivalent formulation of the problem: 
\begin{equation}\label{staticfunctional}
\begin{split}\min\left\{\frac12\int_0^1 \frac{1}{\bar\omega^2}|q_{ss}|^2 ds+\frac{1}{2\tau}|q(1)-q^\ast|^2\right\}
\mbox{ subject to }\left\{\begin{array}{ll}
        |q_s|^2=1 &\mbox{in }(0,1)\\
        |q_{ss}|\leq \bar \omega&\mbox{in }(0,1)\\
        q(0)=0\\q_{s}(0)=-e_2
        \end{array}
\right.\end{split}
\end{equation}
The constraints are still there, and they should be enforced by suitable multipliers. Nevertheless, the problem is more tractable, as we will show below 
introducing an augmented Lagrangian method for its numerical solution. 
But first, we present an investigation on the reachability of target points $q^*$, i.e., 
we study the set of $q^*$ for which the equation $q(1)=q^*$ admits a solution, under the inextensibility and curvature constraints. In particular, we establish 
a connection with the celebrated \emph{Dubins car} problem, a classical example studied in motion planning \cite{markov}.
\subsection{Reachability}
Let us start by making an important assumption on the model, i.e., 
that the elastic constants $\mu, \varepsilon$ and the curvature bound $\omega$ are balanced in the following way:
$$
\bar\omega(s)=\frac{\mu(s)\omega(s)}{\mu(s)+\varepsilon(s)}\equiv \bar\omega_0\quad\mbox{for }s\in[0,1)\quad\mbox{and}\quad\lim_{s\to 1}\bar\omega(s)=\bar\omega_0\,,$$
for some $\bar\omega_0>0$. Incidentally, notice that this implies that $\omega$ is unbounded near $1$, since $\mu(1)=0$. 
With this assumption standing, we recover the same results of Proposition \ref{pequilibria}, by restricting our set of admissible controls to those satisfying $u(1)=u_s(1)=0$. More precisely  we define the \emph{set of admissible controls} as follows
 $$\mathcal A:=\{ u:[0,1]\to\RR\mid u\in C^2([0,1]),\,|u|\leq 1,\, u(1)=u_s(1)=0\}$$
 and we rewrite the problem \eqref{reducedstationary} in the following way:
\begin{equation}\label{polar}
\left\{
\begin{array}{ll}
q_s=(\sin(\theta),-\cos(\theta))&\mbox{in }(0,1)\\
\theta_s= \bar\omega_0 u&\mbox{in }(0,1)\\
q(0)=0\\
\theta(0)=0
\end{array}
\right.
\end{equation}
which is exactly the classical Dubins car control system \cite{markov}. Then, we define the \emph{reachable set} as
$$R:=\{q(1)\mid q=q[u] \text{ is a $C^1$ solution of \eqref{polar}}, u\in\mathcal A\}.$$
Moreover, we consider the wider control set
$$\mathcal A':=\{u:[0,1]\to\RR \mid u \text{ piecewise continuous}, |u|\leq 1\}$$
and let
$$R':=\{q[u]\mid q[u]  \text{ is a $C^1$ solution of } \eqref{polar}, u\in \mathcal A'\}.$$
 In \cite{cockayne} the set $R'$ is characterized via the following control set:
$$\mathcal A'':=\{u:[0,1]\to\{0,\pm 1\} \mid u \text{ has a finite number of discontinuities}\}.$$
In order to properly cite this result, we adopt the following notations:  $q\in CL$ (respectively $q\in CC$) if $q=q[u]$ is a $C^1$ solution of \eqref{polar} with  
\begin{equation}\label{uCL}
u(s)=\begin{cases}
\bar u &s\in[0,\bar s]\\
0 \text{ (respectively, }-\bar u)&s\in (\bar s,1]
\end{cases}
\end{equation} 
for some $\bar s\in [0,1]$ and $\bar u\in\{\pm 1\}$. In other words, $q[u]\in CL$ if it is composed by a circular arc with radius $1/\bar\omega_0$ 
and by a line segment, while $q[u]\in CC$ if it is composed by two tangent circular arcs with radius $1/\bar\omega_0$. 
We are now in position to recall Cockayne and Hall's result:
\begin{proposition}\cite{cockayne}\label{cockaine}
	For every $\bar\omega_0>0$, if $q^*\in R'$, then there exists  $u\in \mathcal A''$ such that the Carath\'eodory solution $q[u]$ of \eqref{polar} satisfies 
	$q[u](1)=q^*$. Moreover, $R'$ is a compact set and $q[u](1)$ belongs to its boundary only if $u$ is a piecewise constant function of the form \eqref{uCL}, i.e., $\partial R'\subseteq \{q(1)\mid q\in CC\cup CL\}$.
	\end{proposition}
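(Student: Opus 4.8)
The plan is to regard \eqref{polar} as a control system with state $(q,\theta)\in\RR^2\times\RR$, control $u$, and ``time'' the arclength $s\in[0,1]$, and to deduce both the compactness of $R'$ and the structure of $\partial R'$ from the Pontryagin maximum principle applied to the reachability problem with free final state.

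\emph{Compactness.} First I would relax the problem to all measurable controls $u:[0,1]\to[-1,1]$, whose set $U$ is weak-$*$ compact in $L^\infty(0,1)$, and call $\widetilde R$ the corresponding set of endpoints $q[u](1)$. Since $\theta[u](s)=\bar\omega_0\int_0^s u$, the map $u\mapsto\theta[u]$ is continuous from the weak-$*$ topology to $C([0,1])$: the functions $\theta[u]$ are uniformly Lipschitz, and $u_n\rightharpoonup^* u$ gives $\int_0^s u_n\to\int_0^s u$ for every $s$, hence uniform convergence by Arzel\`{a}--Ascoli. Composing with $q[u](1)=\int_0^1(\sin\theta[u],-\cos\theta[u])\,ds$ and using dominated convergence, the endpoint map $u\mapsto q[u](1)$ is weak-$*$ continuous, so $\widetilde R$ is compact. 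The identification $\widetilde R=R'$ --- i.e.\ that every point of $\widetilde R$ is already reached by a control in $\mathcal A''\subset\mathcal A'$ --- then follows from the structural analysis below for boundary points, together with a prolongation argument for interior points (this is the step where, if needed, I would simply invoke \cite{cockayne}).

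\emph{Boundary points are extremal.} Let $q^*\in\partial\widetilde R$. By the maximum principle for the boundary of the reachable set, along the corresponding trajectory there is a nontrivial adjoint $(p_1,p_2,p_\theta)$ solving $p_1'=p_2'=0$, $p_\theta'=-(p_1\cos\theta+p_2\sin\theta)$, with the free-endpoint transversality condition $p_\theta(1)=0$ on the unconstrained component $\theta$, and $u(s)$ maximizing $u\mapsto\bar\omega_0\,p_\theta(s)\,u$ over $[-1,1]$ for a.e.\ $s$. Thus $p_1\equiv a$ and $p_2\equiv b$ are constants with $(a,b)\neq 0$ --- otherwise $p_\theta'\equiv 0$ and $p_\theta\equiv p_\theta(1)=0$, contradicting nontriviality --- and, writing $a\cos\theta+b\sin\theta=R\cos(\theta-\psi)$ with $R>0$, one gets $u=\operatorname{sign}(p_\theta)$ wherever $p_\theta\neq 0$, while on any interval where $p_\theta\equiv 0$ (a singular arc) one has $p_\theta'\equiv 0$, hence $\cos(\theta-\psi)\equiv 0$, so $\theta$ is constant and $u\equiv 0$ there. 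Geometrically the extremal trajectory is a concatenation of circular arcs of radius $1/\bar\omega_0$ (where $p_\theta\neq 0$) and straight segments (singular arcs), which --- once one checks that $p_\theta$ has finitely many sign changes on $[0,1]$ --- already shows the control can be taken in $\mathcal A''$.

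\emph{Reducing to $CC\cup CL$.} It remains to show that an extremal reaching $\partial\widetilde R$ has at most one switch and its first piece is a circular arc, i.e.\ that $u$ has the form \eqref{uCL}. Here I would use the initial condition $\theta(0)=0$ together with $p_\theta(1)=0$: a leading straight (singular) piece would force $\theta\equiv 0$ on it, hence $\psi=\pm\pi/2$, and on each circular arc, dividing $p_\theta'=-R\cos(\theta-\psi)$ by $\theta'=\pm\bar\omega_0$, $p_\theta$ becomes an explicit function of $\theta$ of the form $\mp\frac{R}{\bar\omega_0}\sin(\theta-\psi)+\text{const}$; tracking the sign of $p_\theta$ across consecutive zeros and matching it to the boundary data then leaves only the patterns $C$, $CL$ and $CC$ of \eqref{uCL}. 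I expect this last step --- compressing the a priori multi-switch bang-bang/singular structure down to a single switch by exploiting the two endpoint conditions on $\theta$ and $p_\theta$ --- to be the main obstacle; it is precisely the content of Cockayne and Hall's analysis in \cite{cockayne}. Once it is in place, every $q^*\in\partial R'$ is the endpoint of a path in $CC\cup CL$, while $R'=\widetilde R$ is compact, which is the assertion of the proposition.
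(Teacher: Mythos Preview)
The paper does not prove this proposition at all: it is introduced with ``We are now in position to recall Cockayne and Hall's result'' and is simply quoted from \cite{cockayne}. There is therefore no proof in the paper to compare your proposal against.

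That said, your sketch via the Pontryagin maximum principle is the natural route and is, in outline, the one taken in the original reference. The compactness argument (weak-$*$ compactness of the control set, continuity of the endpoint map via Arzel\`a--Ascoli on $\theta[u]$ and dominated convergence) is correct, as is the derivation of the adjoint system and the identification of bang and singular arcs. You are also right to flag the reduction to a single switch --- i.e.\ ruling out patterns other than $C$, $CL$, $CC$ --- as the substantive step; this is precisely where Cockayne and Hall's analysis does the work, and your honest deferral to \cite{cockayne} at that point is appropriate.

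One small circularity to be aware of: in the compactness paragraph you write that the identification $\widetilde R=R'$ follows ``from the structural analysis below for boundary points, together with a prolongation argument for interior points.'' But the first assertion of the proposition is exactly that every point of $R'$ is already reached by a control in $\mathcal A''$, so you cannot invoke it as an ingredient. You would need to argue directly that interior points of $\widetilde R$ are reachable by finite concatenations of arcs and segments --- which is again part of what \cite{cockayne} proves.
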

\noindent Using this result, we can prove the following proposition.
\begin{proposition}\label{th-reach} For every $\bar\omega_0>0$
	$$R=\text{int}(R'),$$
	where $\text{int}(\cdot)$ denotes the interior of a set. In particular, 
	$$\partial R\subseteq\{q(1)\mid q\in CC\cup CL\}.$$
\end{proposition}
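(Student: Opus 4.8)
The plan is to establish the two inclusions $R\subseteq\mathrm{int}(R')$ and $\mathrm{int}(R')\subseteq R$ separately, using Proposition~\ref{cockaine} as the main external input. The inclusion $R\subseteq R'$ is immediate since $\mathcal A\subseteq\mathcal A'$ (any $C^2$ control with $|u|\le1$ is in particular piecewise continuous with $|u|\le1$), so the real content of the first inclusion is that no element of $R$ lands on $\partial R'$. By Proposition~\ref{cockaine}, $\partial R'\subseteq\{q(1)\mid q\in CC\cup CL\}$, i.e.\ every boundary point is reached by a control of the bang-bang form \eqref{uCL}. I would argue that such a control cannot be matched by any admissible $u\in\mathcal A$: a control $u\in\mathcal A''$ of type \eqref{uCL} with $\bar u=\pm1$ forces $|\theta_s|=\bar\omega_0$ on $[0,\bar s]$, and correspondingly the curve $q[u]$ contains a genuine circular arc of radius $1/\bar\omega_0$ (nondegenerate whenever $\bar s>0$); to reproduce the same endpoint $q(1)=q^*$ with a $C^2$ control satisfying $u(1)=u_s(1)=0$ and $|u|\le1$ would require, by uniqueness of the Cauchy problem \eqref{polar}, that the two controls coincide as functions — but the $C^2$ requirement (or the endpoint conditions, or $|u|\le1$ with equality only on a null set forced by continuity at a corner between two arcs) is incompatible with the discontinuities / corner points of \eqref{uCL}. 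The degenerate case $\bar s=0$ (so $q[u]$ is a pure line segment or a single arc through $q^*=q(1)$) must be handled by a direct check that these finitely-parametrised boundary curves are also not attainable in $\mathcal A$, or else absorbed by noting such points still lie on $\partial R'$ and using compactness of $R'$.

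For the reverse inclusion $\mathrm{int}(R')\subseteq R$, I would use a density/approximation argument: given $q^*\in\mathrm{int}(R')$, I want to produce $u\in\mathcal A$ with $q[u](1)=q^*$. The natural route is to first obtain, from Proposition~\ref{cockaine} applied to points near $q^*$, a piecewise-constant control in $\mathcal A''$ reaching some nearby point, then mollify it: smoothing a bang-bang control $u\in\mathcal A''$ by convolution with a mollifier produces $\tilde u\in C^\infty$ with $|\tilde u|\le1$, and one can further multiply by a cutoff that is $\equiv1$ except near $s=1$ so as to enforce $\tilde u(1)=\tilde u_s(1)=0$; the endpoint $q[\tilde u](1)$ depends continuously on $\tilde u$ in, say, the $L^1$ topology, so it can be made arbitrarily close to $q^*$. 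Since $q^*$ is an \emph{interior} point of $R'$, a small ball $B(q^*,\delta)\subseteq R'$; combining this with an openness/surjectivity argument — perturbing the smoothed control and using that the endpoint map $u\mapsto q[u](1)$ is $C^1$ and, on a suitable finite-dimensional family of perturbations supported away from $s=1$, has full-rank differential — lets me hit $q^*$ exactly rather than merely approximately. Finally, $R$ is open: by the same openness of the endpoint map on $\mathcal A$, $q[u](1)$ is interior to $R$ for every $u\in\mathcal A$, so $R\subseteq\mathrm{int}(R')$ upgrades to $R=\mathrm{int}(R')$; and the last assertion $\partial R\subseteq\{q(1)\mid q\in CC\cup CL\}$ then follows because $\partial R=\partial(\mathrm{int}(R'))\subseteq\partial R'$ (using that $R'$ is compact, hence equals the closure of its interior in the relevant planar region, so $\overline{\mathrm{int}(R')}\subseteq R'$ and the topological boundary is contained in $\partial R'$).

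The main obstacle I anticipate is the exact-hit surjectivity step in $\mathrm{int}(R')\subseteq R$: proving that the interior of the reachable set of the \emph{large} control class $\mathcal A'$ is contained in the reachable set of the \emph{restricted, smoother} class $\mathcal A$ requires a genuine quantitative controllability argument, not just density, because $\mathcal A$ is a proper subset of $\mathcal A'$ and a priori its reachable set could be a dense-but-not-open subset of $\mathrm{int}(R')$. The cleanest fix is to show the endpoint map restricted to $\mathcal A$ is \emph{open} (so $R$ is open) and \emph{dense-valued} in $\mathrm{int}(R')$ (via mollification of the Cockayne–Hall bang-bang controls, with the cutoff near $s=1$ making them land in $\mathcal A$), since an open dense subset of a connected open set that is relatively closed must be the whole set — and relative closedness would follow from a compactness/limit argument on minimising sequences of controls, exploiting that $|u|\le1$ gives weak-$*$ compactness and that, away from $s=1$ where $\bar\omega\equiv\bar\omega_0$, limits of admissible endpoints stay admissible. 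Getting all three properties (open, dense, relatively closed) to interlock correctly, while respecting the boundary conditions $u(1)=u_s(1)=0$, is where the care is needed; everything else is routine ODE continuity and mollification.
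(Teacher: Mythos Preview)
Your argument for $R\subseteq\mathrm{int}(R')$ contains a genuine error. You write that reproducing a boundary endpoint $q^*$ with a control in $\mathcal A$ ``would require, by uniqueness of the Cauchy problem \eqref{polar}, that the two controls coincide as functions.'' This is false: uniqueness for \eqref{polar} determines the \emph{trajectory} from the control, not the control from the endpoint $q(1)$; distinct controls can and do share the same terminal point. The correct route---and this is exactly what the paper does in one line---is to read Proposition~\ref{cockaine} as asserting that \emph{any} control $u\in\mathcal A'$ with $q[u](1)\in\partial R'$ must already be of the piecewise-constant form \eqref{uCL}. Since $\mathcal A\subset\mathcal A'$ and non-trivial controls of form \eqref{uCL} fail the $C^2$ and endpoint requirements of $\mathcal A$, one gets $R\cap\partial R'=\emptyset$ directly, with no detour through trajectory uniqueness. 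Drop that step and invoke Cockayne--Hall as stated.

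For the reverse inclusion your plan is considerably more elaborate than the paper's, but it addresses a point the paper glosses over. The paper takes $u\in\mathcal A''$ with $q[u](1)=q^*$, approximates it in $L^2$ by $u_n\in\mathcal A$, and derives the explicit endpoint bound
\[
|q[u_n](1)-q^*|^2\le \frac{\bar\omega_0^2+\bar\omega_0^4}{n}
\]
by estimating $q_{ss}[u_n]\cdot q_{ss}[u]$ directly. It then writes ``$R'\subseteq\mathrm{cl}(R)$ and consequently $\mathrm{int}(R')\subseteq R$'' without further comment. That last implication is precisely the exact-hit gap you flag: density gives only $\mathrm{int}(R')\subseteq\overline{R}$, and passing to $\mathrm{int}(R')\subseteq R$ needs something like the openness of $R$ that you propose to prove via a full-rank perturbation of the endpoint map. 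So the openness/surjectivity machinery in your plan is not over-engineering; it is what would be needed to make the paper's own conclusion rigorous. The paper buys brevity with its explicit $L^2$-to-endpoint estimate; your approach buys completeness at the cost of a local surjectivity argument.
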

\begin{proof}
First of all, we remark that, by definition, $R\subset R'$. By Proposition \ref{cockaine},  
the boundary of $R'$ is reachable only by piecewise constant controls, then we have $R\subseteq \text{int}( R')$. To prove the other inclusion, let $q^*\in \text{int}(R')$: again by Proposition \ref{cockaine} there exists a control $u\in \mathcal A''$ be such that $q[u](1)=q^*$. 
Now, note that $\mathcal A$ is dense in $\mathcal A''$ with respect to the $L^2$ norm, we then may consider an approximating control sequence $u_n\in \mathcal A$ such that 
$$\int_0^1 |u_n-u|^2 \leq \frac{1}{n}\quad \forall n\in\mathbb N.$$
Setting $\theta_n(s)=\int_0^s\bar \omega_0 u_n(\xi)d\xi$ and $\theta(s)=\int_0^s \bar \omega_0 u(\xi)d\xi$ we also get the estimate
$$|\theta_n(s)-\theta(s)|^2=\bar \omega^2_0\left|\int_0^s (u_n(\xi)-u(\xi))d\xi\right|^2\leq\bar \omega^2_0\int_0^1|u_n-u|^2\leq \frac{\bar \omega^2_0}{n}\quad \forall s\in[0,1].$$
 The role of the function $\theta$ (and similarly of $\theta_n$) becomes clearer by remarking that from \eqref{polar}, we get
 $$q_{ss}[u]=\bar\omega_0 u(\cos(\theta),\sin(\theta)) \quad \text{a.e. in }[0,1]\,.$$
By the assumption $|u|,|u_n|\leq 1$, it follows that
 \begin{align*}
q_{ss}[u_n]\cdot q_{ss}[u]&=\bar\omega_0^2 u_nu\cos(\theta-\theta_n)\geq \bar\omega_0^2 u_nu(1-\frac{1}{2}(\theta-\theta_n)^2)
\geq \bar\omega_0^2 u_nu\left(1-\frac{\bar \omega_0^2}{2n}\right)\geq \bar\omega_0^2 u_nu-\frac{\bar\omega_0^4}{2n}\,.
 \end{align*}
Now, recalling that $q[u](0)=q[u_n](0)=0$ and $q_s[u](0)=q_s[u_n](0)=-e_2$, we have
\begin{align*}
|q[u_n](1)-q^*|^2=&\left|\int_0^1\int_0^s \left(q_{ss}[u_n](\xi)-q_{ss}[u](\xi)\right)d\xi ds\right|^2\\\leq& \int_0^1\int_0^1|q_{ss}[u_n]-q_{ss}[u]|^2=\int_0^1\int_0^1|q_{ss}[u_n]|^2+|q_{ss}[u]|^2-2q_{ss}[u_n]\cdot q_{ss}[u]\\ 
\leq &\bar\omega_0^2 \int_0^1\int_0^1\left(( u_n- u)^2+\frac{\bar\omega_0^2}{n}\right) \leq \frac{\bar\omega_0^2+\bar\omega_0^4}{n}\to 0\qquad \text{as }\, n\to+\infty.
\end{align*}
We deduce from the arbitrariness of $q^*$ that $R'\subseteq cl(R)$ and consequently $int(R')\subseteq R$. The proof is complete. 
\end{proof}
We conclude this section by employing Proposition \ref{th-reach} to compute the boundary of the reachable set $R$. To this end, we choose a uniform grid on $[0,1]$ and 
we discretize the integral representation of the tentacle tip  
$$q[u](1)=\int_0^1\left(\sin\left(\int_0^s \bar\omega_0 u(\xi)\,d\xi\right),
-\cos\left(\int_0^s \bar\omega_0 u(\xi)\,d\xi\right)\right)ds$$
by means of a rectangular quadrature rule. Then we let $u$ exhaustively attain all the extremal configurations of type \eqref{uCL} projected on the grid.  
Figure \ref{reachset} shows the results, depending on the constant parameter $\bar\omega_0$. 

\begin{figure}[!h]
\centering
\begin{tabular}{ccccc}
 \includegraphics[scale=0.225]{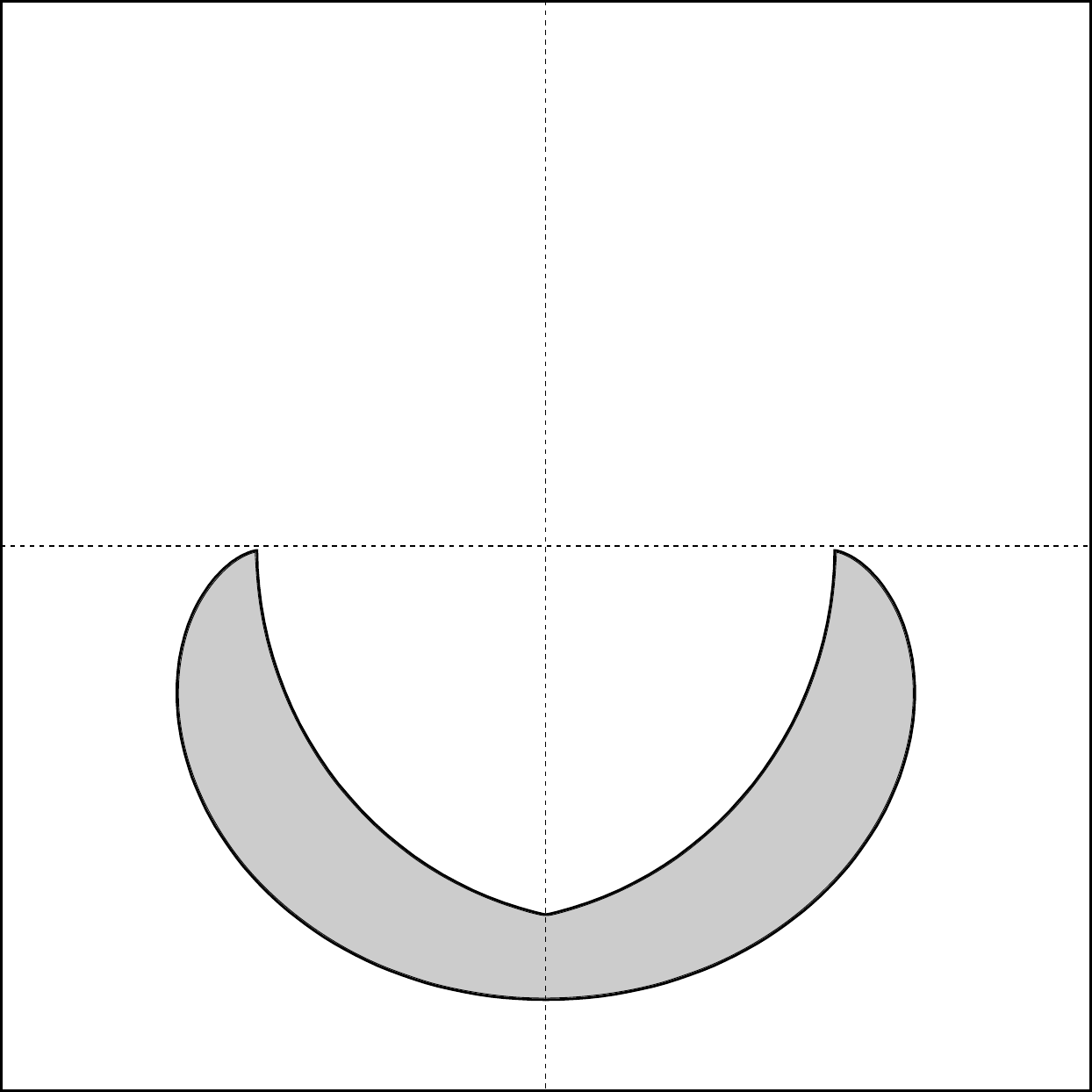}&
 \includegraphics[scale=0.225]{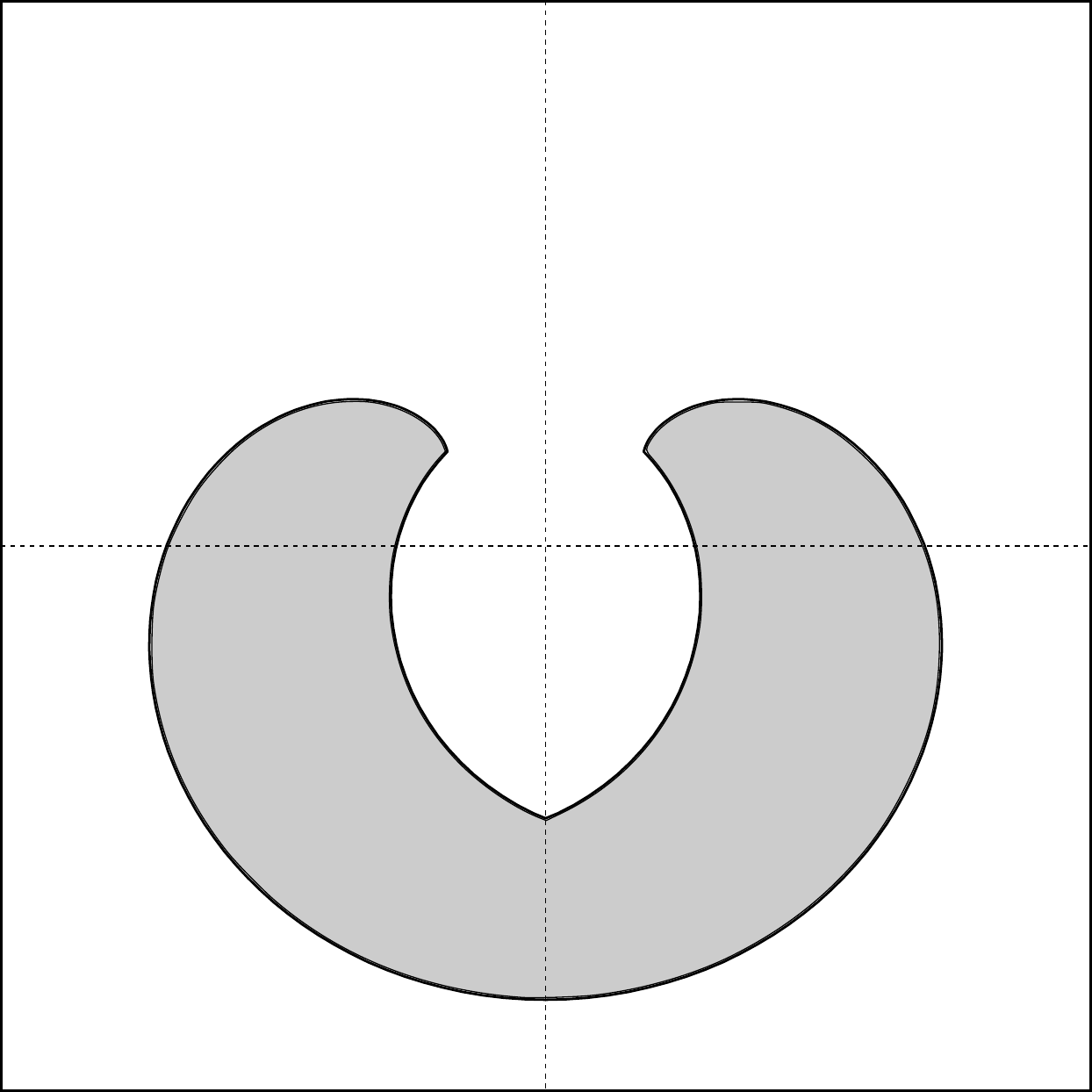}&
 \includegraphics[scale=0.225]{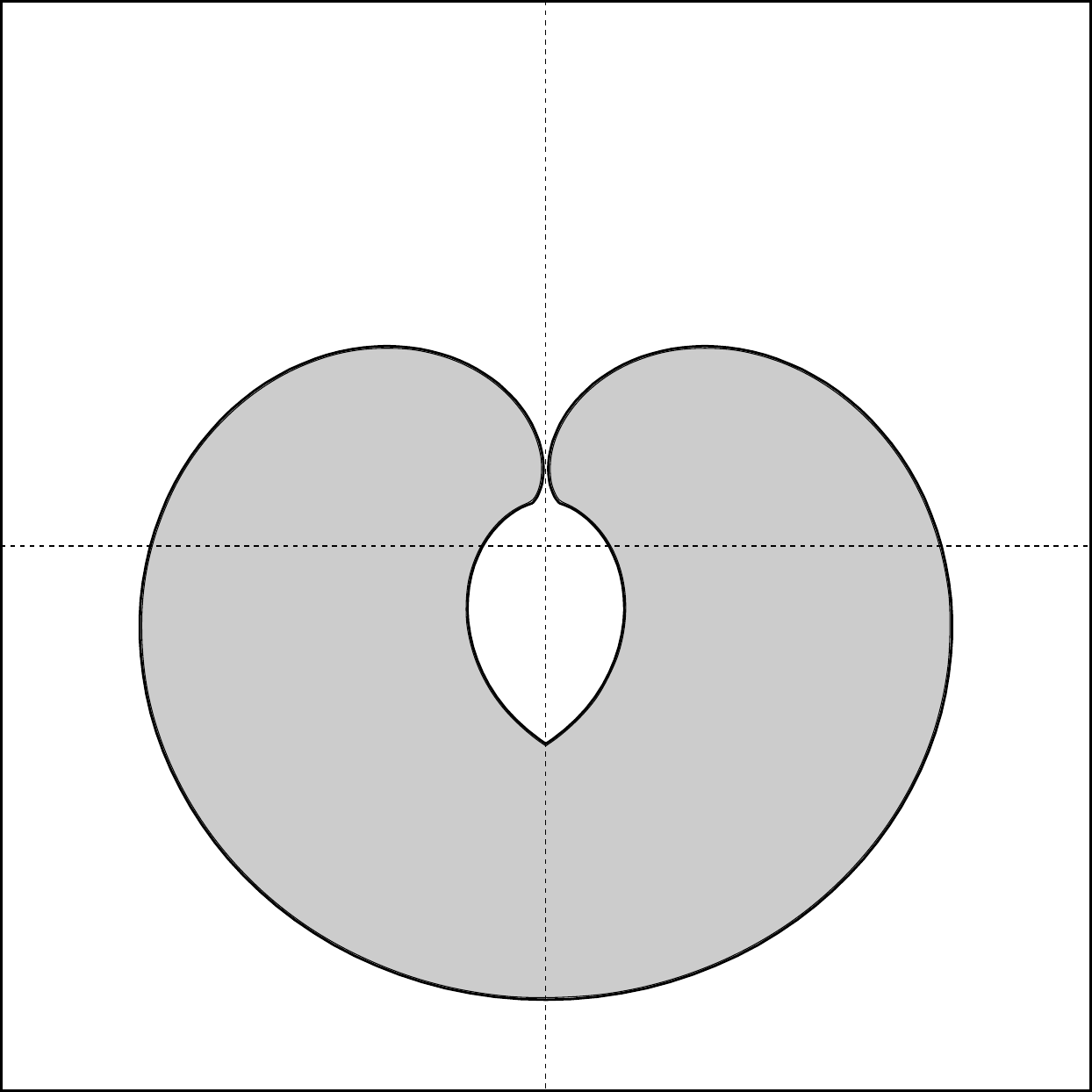}&
 \includegraphics[scale=0.225]{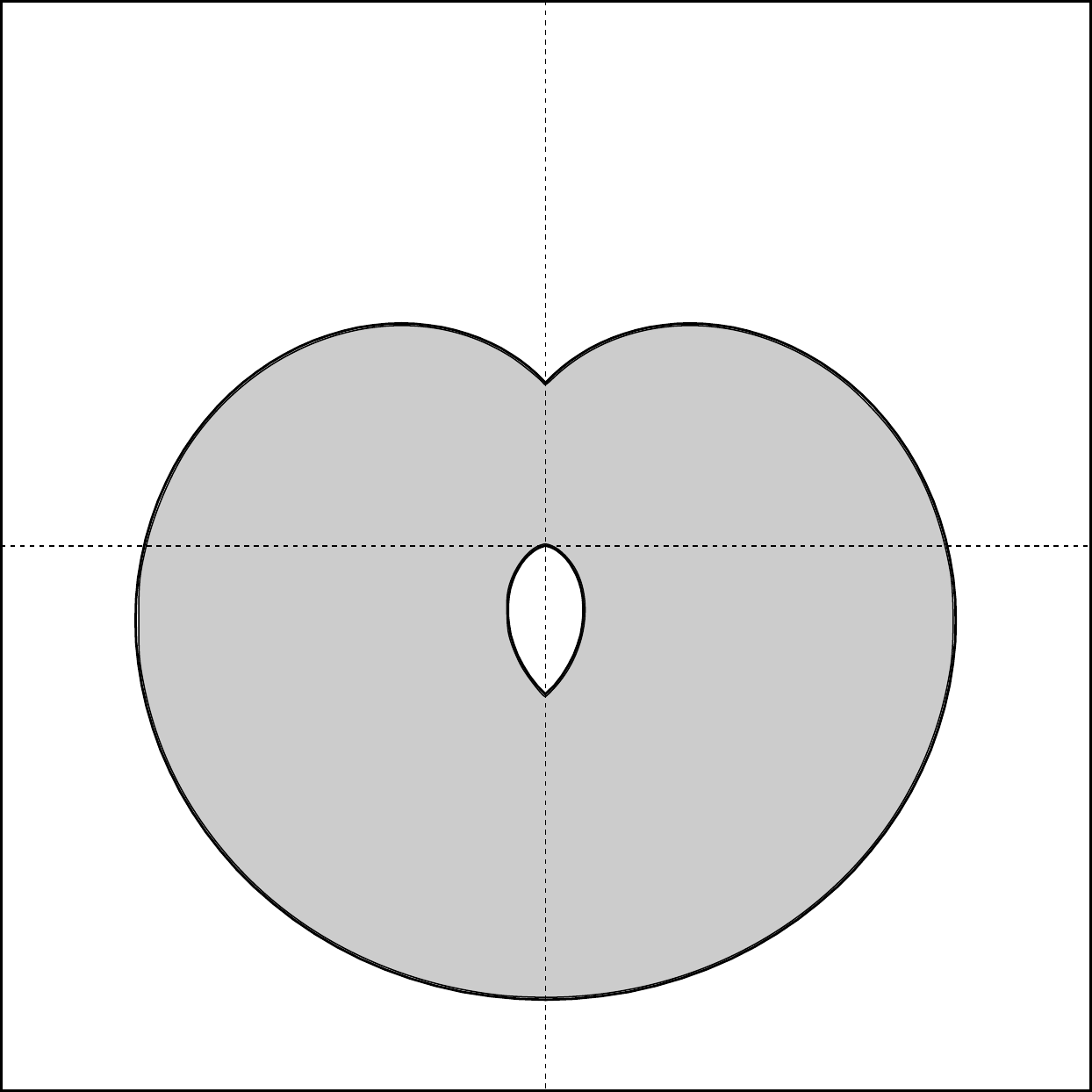}&
 \includegraphics[scale=0.225]{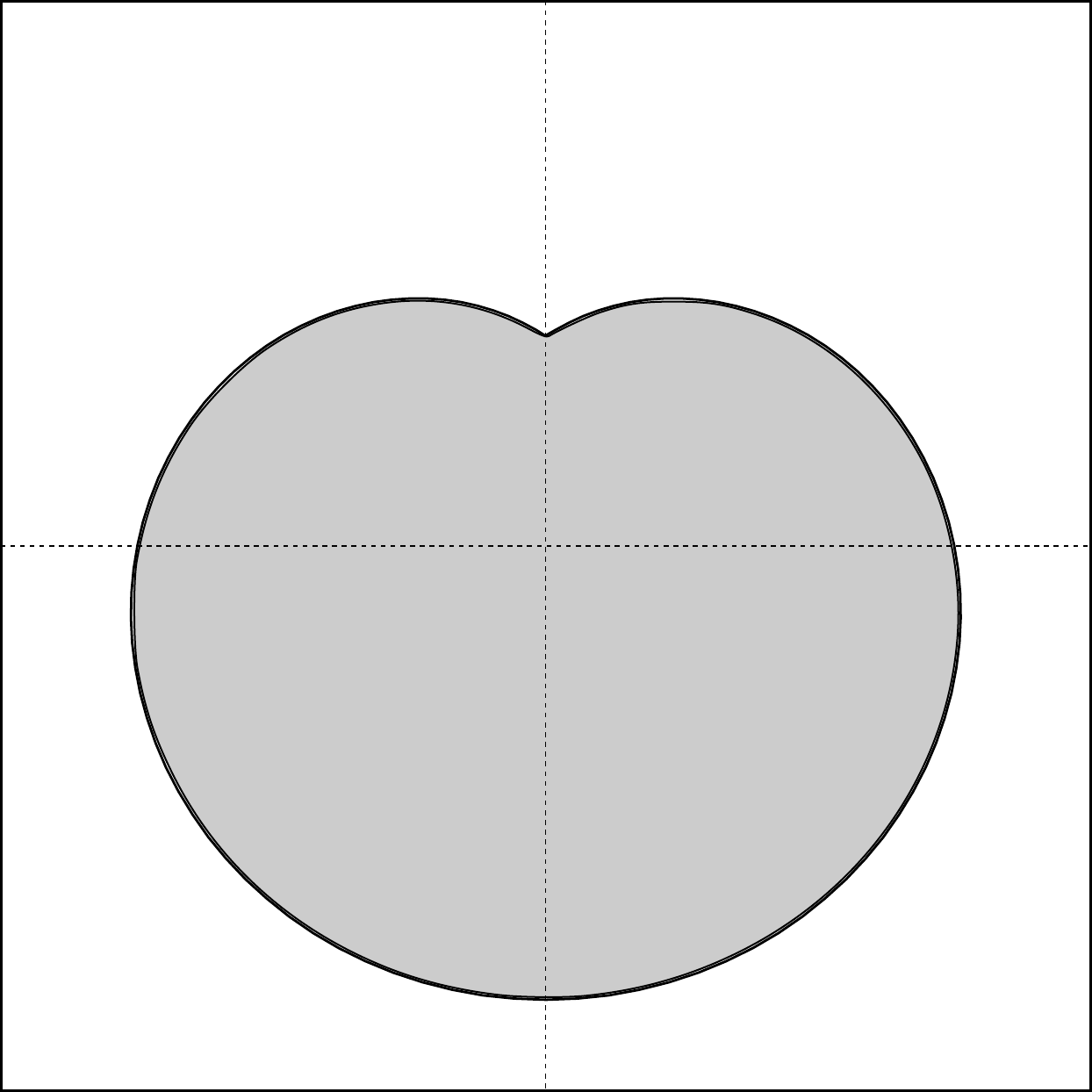}\\
 $\bar \omega_0= \pi$ &
 $\bar \omega_0= \frac32\pi$ &
 $\bar \omega_0= \frac32\pi+1$ &
 $\bar \omega_0= 2\pi$ &
 $\bar \omega_0= \frac94\pi$
\end{tabular}
\caption{reachable set depending on $\bar\omega_0$}\label{reachset}
\end{figure}
\noindent Starting from the extremal value $\bar \omega_0=0$, for which $(0,-1)$ is clearly the unique reachable point, 
the set $\mathcal{R}$ grows up to form a cardiod, enclosing a region of unreachable points. As $\bar \omega_0$ increases, the hole disappears, while for 
$\bar \omega_0\to\infty$, $\mathcal{R}$ converges to the full circle of radius $1$, which is exactly the reachable set of an inextensible string without curvature constraints.\\
Numerical evidence shows that Proposition \ref{th-reach} holds also in the case of non constant curvature $\bar \omega$, as for the simulation presented in 
the next section. Nevertheless, a complete proof of such result, based on Pontryagin's maximum principle, is still under investigation.
\subsection{An augmented Lagrangian method}
Here we propose a numerical method for solving the stationary optimal control problem \eqref{staticfunctional}, and we present some simulations. 
First of all, we relax the inequality constraint on the curvature in \eqref{staticfunctional}, by introducing a so called {\em slack variable}, namely we define 
$z:=\bar\omega^2-|q_{ss}|^2$, so that $|q_{ss}|\le \bar \omega$ if and only if $z\ge0$.
Note that we still have an inequality constraint in $z$, but it is much simpler to treat, as we show below. 
We introduce the following augmented Lagrangian
$$\mathcal{L}(q,\sigma,z,\lambda)=\mathcal{J}(q)+\frac12\hspace{-2pt}\int_0^1 \hspace{-5pt}\sigma(|q_s|^2-1)ds+\frac12\hspace{-2pt}\int_0^1\hspace{-5pt}\lambda(|q_{ss}|^2-\bar\omega^2+z)ds+\frac{1}{4\rho_\lambda}\hspace{-2pt}\int_0^1\hspace{-5pt}(|q_{ss}|^2-\bar\omega^2+z)^2ds\,,
$$
where
\begin{equation}\label{staticfun}
\mathcal{J}(q)=\frac12\hspace{-2pt}\int_0^1 \hspace{-5pt}\frac{1}{\bar\omega^2}|q_{ss}|^2 ds+\frac{1}{2\tau}|q(1)-q^\ast|^2\,,
\end{equation}
$\sigma$ is again an exact Lagrange multiplier for the inextensibility constraint, while 
$\lambda$ and $\rho_\lambda>0$ are respectively the multiplier and penalty parameter related to the relaxed constraint $|q_{ss}|^2-\bar\omega^2+z=0$. 
We apply the classical method of multipliers \cite{multipliers-H,multipliers-P} to compute a solution of our minimization problem, namely we iterate on $i\ge 0$ up to convergence
$$
\left\{\begin{array}{l}(q^{(i+1)},\sigma^{(i+1)},z^{(i+1)})=\arg\hskip-2pt\displaystyle\min_{\hskip-13ptq,\,\sigma,\,\,\,z\ge 0}\mathcal{L}(q,\sigma,z,\lambda^{(i)})\\\\
\lambda^{(i+1)}=\lambda^{(i)}+\displaystyle\frac{1}{\rho_\lambda}(|q_{ss}^{(i+1)}|^2-\bar\omega^2+z^{(i+1)})
\end{array}
\right.$$
where the optimization sub-problem for fixed $\lambda^{(i)}$ is solved as follows, by imposing first order optimality conditions. 

Taking the variation of $\mathcal{L}$ with respect to $\sigma$ in direction $\chi$, we immediately recover the inextensibility constraint. Indeed,
$$\langle\delta_\sigma\mathcal{L},\chi\rangle=\int_0^1 (|q_s|^2-1)\chi ds=0\quad\forall\,\chi\quad
\Longrightarrow\quad |q_s|^2=1 \quad\mbox{a.e. in $(0,1)$}.
$$
On the other hand, taking the variation of $\mathcal{L}$ with respect to $z$ in direction $v$ and imposing the constraint $z\ge 0$, we get the variational inequality
$$
\langle\delta_z\mathcal{L},v\rangle=\frac12\int_0^1\left(\lambda^{(i)}+\displaystyle\frac{1}{\rho_\lambda}(|q_{ss}|^2-\bar\omega^2+z)\right)(v-z)ds\ge 0\qquad \forall\,v\ge 0\,,
$$
from which we readily obtain the optimal value 
$$
z=\max\left\{-\lambda^{(i)}\rho_\lambda-|q_{ss}|^2+\bar\omega^2,0\right\}\quad\mbox{a.e. in $(0,1)$}.
$$
Now, taking the variation of $\mathcal{L}$ with respect to $q$ in direction $w$ and substituting the above expression for the optimal $z$, we get
$$
\langle\delta_q\mathcal{L},w\rangle=\hskip-5pt\int_0^1\hskip-5pt\left( \Lambda(q_{ss},\lambda^{(i)})q_{ss}\cdot w_{ss} 
+\sigma q_s\cdot w_s\right)\,ds+\frac{1}{\tau}(q(1)-q^\ast)\cdot w(1)=0\qquad \forall\,w
$$
with
$$
\Lambda(q_{ss},\lambda^{(i)})= \frac{1}{\bar\omega^2}+\max\left\{\lambda^{(i)}+\frac{1}{\rho_\lambda}(|q_{ss}|^2-\bar\omega^2),0\right\}\,.
$$
Integrating by parts and imposing boundary conditions, we end up with the following strong formulation of the optimality conditions:
\begin{equation}\label{staticcontrol}
\left\{\begin{array}{ll}
                  \left(\Lambda(q_{ss},\lambda^{(i)})q_{ss}\right)_{ss} -\left(\sigma q_s\right)_s=0 & \mbox{in }(0,1)\\
                  |q_s|^2=1 & \mbox{in }(0,1)\\
                  q(0)=0,\quad q_{s}(0)=-e_2\\q_{ss}(1)=0,\quad q_{sss}(1)=0\\
                  \sigma(1)q_s(1)+\frac{1}{\tau}(q(1)-q^\ast)=0
                 \end{array}
\right.
\end{equation}

We remark that, apart the nonlinear term in $\Lambda$ related to the curvature constraint, the first equation is exactly the stationary Euler's elastica equation, 
completed with cantilevered boundary conditions. 
On the other hand, the last equation quantifies the balance between the tension of the tentacle and the boundary force resulting from 
the potential energy $\frac{1}{2\tau}|q(1)-q^\ast|^2$ associated to the target point.
More explicitly, dot-multiplying this equation by $q_s$ and using the inextensibility constraint, we obtain
$$
\sigma(1)=-\frac{1}{\tau}(q(1)-q^\ast)\cdot q_s(1)\,,
$$
i.e., the tension at the free end is exactly opposite to the tangent projection of the boundary force.

We discretize the optimality system \eqref{staticcontrol} by means of a finite difference scheme as in Section \ref{Sec:Discretization}, obtaining
\begin{equation}\label{staticcontroldiscrete}
\left\{\begin{array}{ll}
                  D^2_c\left(\Lambda(D^2_c q_{k},\lambda^{(i)})D^2_c q_{k}\right) -\Dp\left(\sigma_k \Dm q_k\right)=0 & k=1,...,N-1\\
                  |\Dm q_k|^2=1 & k=1,...,N-1\\
                  q_0=0,\quad q_{-1}=q_0+e_2\Delta s\\
                  q_{N+1}-2q_N+q_{N-1}=0\\
                  q_{N+1}-3q_N+3q_{N-1}-q_{N-2}=0\\
                  \sigma_N \Dm q_N+\frac{1}{\tau}(q_N-q^\ast)=0
                 \end{array}
\right.
\end{equation}
This is again a nonlinear system in the pair of unknowns $(q_k,\sigma_k)_{k=1,...,N}$, 
and it can be easily solved by means of a quasi-Newton method. More precisely, at each iteration, the Newton step for \eqref{staticcontroldiscrete} is computed 
by freezing the nonlinearity in the term $\Lambda(D^2_c q_{k},\lambda^{(i)})$ at the previous iteration, 
namely by replacing the full Jacobian of $D^2_c\left(\Lambda(D^2_c q_{k},\lambda^{(i)})D^2_c\right)$ with 
$\Lambda(D^2_c q_{k},\lambda^{(i)})D^2_c D^2_c$ only. This is a reasonable approximation as we approach a solution,  
indeed, by definition, $\Lambda$ does not depend on $q$ whenever the curvature constraint is satisfied. \\
Finally, to evaluate convergence, we discretize the functional $\mathcal{J}$ in \eqref{staticfun} by means of a simple rectangular quadrature rule:
\begin{equation}\label{staticfundis}
\mathcal{J^\sharp}(q)=\frac12\Delta s\sum_{k=0}^N\frac{1}{\bar\omega^2_k}|D^2_c q_k|^2+\frac{1}{2\tau}|q_N-q^\ast|^2\,.
\end{equation}
Summing up, we have the following augmented Lagrangian algorithm: 
\begin{algorithm}
\begin{algorithmic}[1]
\State Assign $q^{(0)}, \lambda^{(0)}$, a target point $q^\ast$, penalties $\tau, \rho_\lambda>0$ and a tolerance $tol>0$. Compute $\mathcal{J^\sharp}(q^{(0)})$ and set $i=0$
\Repeat
\State Compute the solution $(q^{(i+1)},\sigma^{(i+1)})$ of \eqref{staticcontroldiscrete}
\State Update the multipliers
$\lambda_k^{(i+1)}=\max\left\{\lambda_k^{(i)}+\displaystyle\frac{1}{\rho_\lambda}(|D^2_c q_{k}^{(i+1)}|^2-\bar\omega^2),0\right\}$ for $k=1,...,N-1\,.$
\State Compute $\mathcal{J^\sharp}(q^{(i+1)})$ and set $i=i+1$\medskip
\Until {$\left|\mathcal{J^\sharp}(q^{(i)})-\mathcal{J^\sharp}(q^{(i-1)})\right| < tol$}
\end{algorithmic}
\caption{}\label{ALG1}
\end{algorithm}

\noindent Note that this algorithm consists in two nested loops, an external loop for the update of the multipliers, and an internal loop for the computation of the solution. In 
practice, we realized that a single loop is enough to obtain and speed up the convergence, by performing both the update and the Newton step simultaneously. \\
We choose the parameters for the simulations as in Section \ref{Sec:Discretization}. Moreover, we set $q^{(0)}=-e_2 s$, $\lambda^{(0)}=0$, $\tau=10^{-4}$, $\rho_\lambda=10^2$ and $tol=10^{-8}$.  
In Figure \ref{Fig:static-control}, we show the corresponding reachable set, and some optimal solutions computed by Algorithm \ref{ALG1} for different target points. 
For each configuration 
we also report the graph of the signed curvature $\kappa=\Dm q \times D^2_c q$ (thick line), compared with the bounds $[-\bar\omega,\bar \omega]$ (thin lines). 
In particular, we observe that the reachable set is a cardiod also in the case of non constant curvature $\bar \omega$. Moreover, 
we recognize a {\em Dubins path} in the top-right solution, formed by a $CL$-like curve as in \eqref{uCL}, with a circular part of 
variable maximal radius $1/\bar\omega$ plus a straight segment. 
\begin{figure}[!h]
\centering
\includegraphics[width=.875\textwidth]{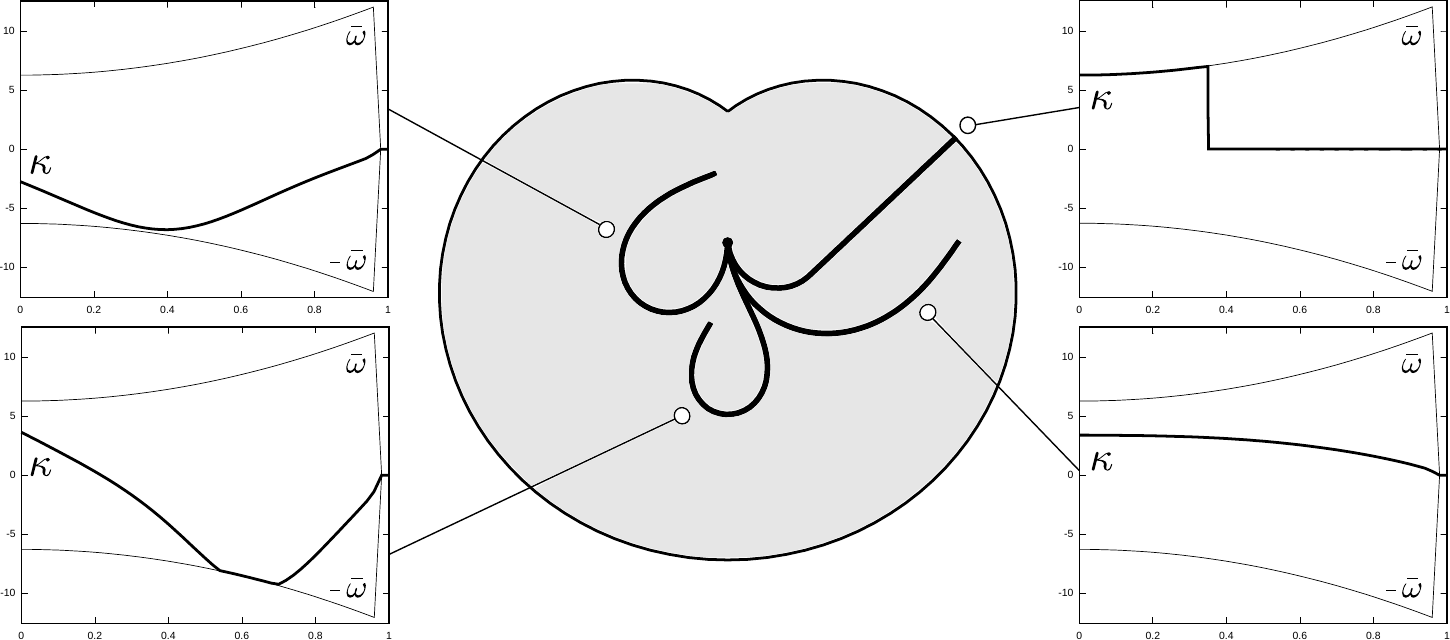}
\caption{solutions of the stationary optimal control problem for different target points}\label{Fig:static-control}
\end{figure}
 
\section{The dynamic optimal control problem}\label{Sec:Dynamic}
In this section we tackle the dynamic optimal control problem discussed in the Introduction. More precisely, 
given $q^\ast\in\RR^2$ and $T, \tau>0$, we want to minimize the functional
\begin{equation}\label{dynamicfunc}
\mathcal{J}(q,u)=\frac{1}{2}\int_0^T\hskip-5pt\int_0^1\hskip-5pt u^2 ds\,dt+\frac{1}{2\tau}\int_0^T\hskip-5pt|q(1,t)-q^\ast|^2 dt
+\frac{1}{2}\int_0^1\hskip-5pt\rho(s)|q_t(s,T)|^2 ds\,,
\end{equation}
among all the controls $u:[0,1]\times[0,T]\to[-1,1]$, and subject to the tentacle dynamics \eqref{tentaclemotion}. 
We observe that, compared to the functional of the stationary control problem \eqref{touchfunctional}, 
the first two terms in $\mathcal{J}$ account respectively for the activation of the tentacle muscles and the tip-target distance during {\em all} all evolution, 
whereas the last term corresponds to the kinetic energy at the final time $T$. 
The problem is then to reach and keep the tip close to the target point with minimum effort, but also to stop the whole tentacle as soon as possible.  
After deriving first order optimality conditions, we propose and implement an adjoint-based gradient descent algorithm for its numerical solution.  
\subsection{First order optimality conditions}
We introduce the adjoint state $(\bar q,\bar \sigma)$, with $\bar q:[0,1]\times[0,T]\to\RR^2$, $\bar\sigma:[0,1]\times[0,T]\to\RR$, and we form the following Lagrangian
\begin{align*}
\bar{\mathcal L}(q,\sigma,u,\bar q,\bar\sigma):=&\mathcal J(q,u) +  \int_0^{T}\int_0^1\bar 
q\cdot\left\{\rho q_{tt}-(\sigma q_s-H q_{ss}^\bot)_{s}+(Gq_{ss}+Hq_{s}^\bot)_{ss}\right\}\,ds\,dt+\frac{1}{2}\int_0^{T}\int_0^1\bar \sigma(|q_s|^2-1)\,ds\,dt\,.
\end{align*}
By construction, the stationarity of $\bar{\mathcal L}$ with respect to the adjoint variables immediately gives back 
the equations of motion and the inextensibility constraint. 
We then focus on the stationarity of $\bar{\mathcal L}$ with respect to $q$, $\sigma$ and $u$, 
which involves long and technical computations. 
Here we report the key steps, in particular the derivation of the boundary conditions for the adjoint system.
To this end, we recall, for the reader's convenience, the definitions of $G$ and $H$ given in \eqref{GH},
$$G[q,\nu,\varepsilon,\omega](s,t):=\varepsilon(s)+\nu(s)\left(|q_{ss}(s,t)|^2-\omega^2(s)\right)_+,$$
$$H[q,\mu,u,\omega](s,t):=\mu(s)\left(\omega(s) u(s,t)-q_s(s,t) \times q_{ss}(s,t)\right)\,.$$
and we denote the corresponding linearizations by
$$\bar G[q,\bar q,\nu,\omega](s,t)=g[q,\nu,\omega](s,t)q_{ss}(s,t)\cdot \bar q_{ss}(s,t)\,,$$
$$\bar H[q,\bar q,\mu])(s,t)=-\mu(s)\left(\bar q_s(s,t) \times q_{ss}(s,t)+q_s(s,t) \times \bar q_{ss}(s,t)\right)\,,$$
where $g[q,\nu,\omega](s,t)=2\nu(s)\mathbf{1}(|q_{ss}(s,t)|^2-\omega^2(s))$ and $\mathbf{1}(\cdot)$ stands for the Heaviside function, i.e. $\mathbf{1}(x)=1$ for $x\ge 0$ and 
$\mathbf{1}(x)=0$ otherwise.  
As usual, we assume enough regularity to perform the computations, and we employ the given boundary conditions to define the admissible tests $(w,\chi)$ 
associated to $(q,\sigma)$. Moreover, we recall the assumptions \eqref{epsmu} on the functions $\varepsilon, \mu$ and some useful properties of $G, H$. 
We summarize all these relations in the following list: for $(s,t)\in[0,1]\times[0,T]$
\begin{equation}\label{boundarygen}
\begin{array}{lll}
q(s,0)=q^0(s)& \Rightarrow & w(s,0)=0\\
q_t(s,0)=v^0(s)&\Rightarrow & w_t(s,0)=0\\
q(0,t)=0&\Rightarrow & w(0,t)=0\\
q_s(0,t)=-e_2&\Rightarrow & w_s(0,t)=0\\
q_{ss}(1,t)=0&\Rightarrow & w_{ss}(1,t)=0\\ 
q_{sss}(1,t)=0&\Rightarrow & w_{sss}(1,t)=0\\ 
\sigma(1,t)=0&\Rightarrow & \chi(1,t)=0\\ 
\varepsilon(s)\geq \varepsilon_0>0 &\Rightarrow& G(1,t)=\varepsilon(1)>0\\
 \mu(1,t)=\mu_s(1,t)=0 &\Rightarrow&  H(1,t)=H_s(1,t)=0\\
\end{array}
\end{equation}
\noindent Let us start by computing the variation of $\bar{\mathcal{L}}$ with respect to $\sigma$ in direction $\chi$:
$$
\langle\delta_\sigma\bar{\mathcal L},\chi\rangle=-\int_0^{T}\int_0^1\bar 
q\cdot(\chi q_s)_s\,ds\,dt= -\int_0^{T}[ \bar q\cdot q_s\chi]_0^1\,dt+\int_0^{T}\int_0^1 \bar q_s\cdot q_s \chi\,ds\,dt\,.
$$
Using \eqref{boundarygen} and imposing stationarity, by the arbitrariness of $\chi$, we get for almost every $(s,t)\in(0,1)\times (0,T)$
\begin{equation}\label{adjointconstr}
 \bar q(0,t)\cdot q_s(0,t)=0\qquad\mbox{and}\qquad\bar q_s(s,t)\cdot q_s(s,t)=0\,,
\end{equation}
that we prolong by continuity to all $(s,t)\in[0,1]\times[0,T]$.  \\

\noindent We now compute the variation of $\bar{\mathcal{L}}$ with respect to $q$ in direction $w$:
\begin{align*}
\langle\delta_q\bar{\mathcal L},w\rangle &=\int_0^T(q(1,t)-q^\ast)\cdot w(1,t)\,dt\\&+\int_0^1\rho q_t(s,T)\cdot w_t(s,T) ds\\
&+  \int_0^{T}\int_0^1\rho \bar q\cdot w_{tt}\,ds\,dt\\&
-\int_0^{T}\int_0^1\bar q\cdot (\sigma w_s+ \bar H[q,w] q_{ss}^\bot+H w_{ss}^\bot)_{s} \,ds\,dt\\
&+ \int_0^{T}\int_0^1 \bar q\cdot ( \bar G[q,w] q_{ss} + Gw_{ss}+\bar H[q,w] q_{s}^\bot+ H w_{s}^\bot)_{ss}\,ds\,dt\\
&+\int_0^{T}\int_0^1\bar\sigma q_s\cdot w_s\,ds\,dt\,,
\end{align*}
\noindent Using again \eqref{boundarygen}, after a very long integration by parts, we obtain
\begin{equation}\label{variation}
\end{equation}
\begin{align*}\langle \delta_q \bar{\mathcal L},w\rangle&=\int_0^T\int_0^1 \left\{\rho\bar q_{tt}
-\left(\sigma \bar q_s - H \bar q_{ss}^\bot + \bar \sigma q_s -\bar H q_{ss}^\bot \right)_s+\left(G \bar q_{ss} + H \bar q_{s}^\bot + \bar G q_{ss} +\bar H q_{s}^\bot \right)_{ss}\right\}\cdot w\,ds\,dt\\
&-\int_0^1 \rho\bar q_t\cdot w(s,T)\,ds\\&+\int_0^1 \rho(q_t+\bar q)\cdot w_t(s,T)\,ds\\
&+\int_0^T \varepsilon\bar q_{ss}\cdot w_s(1,t)\,dt\\&+\int_0^T \left\{\bar \sigma q_s-(G\bar q_{ss})_s+\frac{1}{\tau}(q-q^*)\right\}\cdot w(1,t)\,dt\\
&-\int_0^T\left\{g(\bar q\cdot q_{ss})q_{ss}+G\bar q+\mu(\bar q\cdot q_s^\bot)q_s^\bot\right\}\cdot w_{sss}(0,t)\,dt\\
&+\int_0^T\left\{2H \bar q^\bot-G_s\bar q-\bar q\cdot(2\mu q_{ss}^\bot+\mu_s q_s^\bot)q_s^\bot - \bar q\cdot(g q_{ss})_sq_{ss}
\right.\\
&\hskip.9cm \left.-g(\bar q\cdot q_{ss})q_{sss}+g(\bar q_s\cdot q_{ss})q_{ss}+G\bar q_s+\mu(\bar q_s\cdot q_s^\bot)q_s^\bot\right\}\cdot w_{ss}(0,t)\,dt\,.
\end{align*}
Imposing stationarity, by the arbitrariness of $w$, it follows that all the integrands in the expression above should vanish 
for almost every $(s,t)\in(0,1)\times (0,T)$. In order, we get the adjoint equation
$$
\rho\bar q_{tt}=\left(\sigma \bar q_s - H \bar q_{ss}^\bot + \bar \sigma q_s -\bar H q_{ss}^\bot \right)_s-
\left(G \bar q_{ss} + H \bar q_{s}^\bot + \bar G q_{ss} +\bar H q_{s}^\bot \right)_{ss}\,,
$$
and the final conditions 
$$
\bar q(s,T)=-q_t(s,T)\,,\qquad \bar q_t(s,T)=0\,.
$$
Moreover, since $\varepsilon(1)>0$ by \eqref{boundarygen}, we deduce
$$
\bar q_{ss}(1,t)=0\,,
$$
and then we also get 
\begin{equation}\label{qsss}
\bar \sigma(1,t) q_s(1,t)-\varepsilon(1)\bar q_{sss}(1,t)+\frac{1}{\tau}(q(1,t)-q^*)=0\,.
\end{equation}
Now consider the relation $\bar q_s\cdot q_s=0$ in \eqref{adjointconstr}. 
Differentiating twice in $s$, we obtain $q_s\cdot \bar q_{sss}+2q_{ss}\cdot \bar q_{ss}+q_{sss}\cdot \bar q_s=0$, and 
using the boundary conditions $q_{ss}(1,t)=q_{sss}(1,t)=0$, it follows that $q_s(1,t)\cdot \bar q_{sss}(1,t)=0$. 
Dot-multiplying \eqref{qsss} by $q_s$, we then conclude
$$\bar\sigma(1,t)=-\frac{1}{\tau}(q(1,t)-q^*)\cdot q_s(1,t)\,.$$
Substituting in \eqref{qsss} and projecting $(q-q^\ast)$ on the base $\{q_s,q_s^\bot\}$, we also get  
\begin{equation*}
\bar q_{sss}(1,t)=\frac{1}{\tau\varepsilon(1)}\left((q-q^*)\cdot q_s^\bot\right) q_s^\bot (1,t)\,. 
\end{equation*}
We now proceed with the stationarity of the last two integrands in \eqref{variation}, related to the boundary conditions at $s=0$. 
The first one gives
\begin{equation}\label{wsss}
g(\bar q\cdot q_{ss})q_{ss}+G\bar q+\mu(\bar q\cdot q_s^\bot)q_s^\bot=0\,.
\end{equation}
Using the orthogonality relation $\bar q(0,t)\cdot q_s(0,t)=0$ in \eqref{adjointconstr}, for every $t\in[0,T]$ we can write $\bar q=\alpha q_s^\bot$ for some $\alpha\in\RR$. 
On the other hand, due to the relation $q_s\cdot q_{ss}=0$, we can also write $q_{ss}=\beta q_s^\bot$ for some $\beta\in\RR$. Substituting in \eqref{wsss}, we get
$$
\alpha (g\beta^2 +G +\mu)q_s^\bot=0\,.
$$
Since $G>0$ and $g,\mu\ge 0$, we deduce $\alpha=0$ and, consequently 
$$\bar q(0,t)=0\,.$$
Substituting in \eqref{variation}, several terms cancel out and the last integrand reduces to
$$
g(\bar q_s\cdot q_{ss})q_{ss}+G\bar q_s+\mu(\bar q_s\cdot q_s^\bot)q_s^\bot=0\,.
$$
Note that this expression is exactly \eqref{wsss} with $\bar q_s$ in place of $\bar q$. 
Then, using the relation $\bar q_s(0,t)\cdot q_s(0,t)=0$ in \eqref{adjointconstr}, we can repeat the same argument above to conclude
$$\bar q_s(0,t)=0\,.$$

We finally compute the variation of $\bar{\mathcal{L}}$ with respect to the control, assuming for a moment that $u$ is unconstrained, 
and considering smooth admissible tests $v:[0,1]\times[0,T]\to\RR$. We get 
$$
\langle \delta_u \bar{\mathcal L},v\rangle=\int_0^{T}\int_0^1 u v\,ds\,dt+\int_0^{T}\int_0^1\bar q\cdot\{ (\mu\omega v q_{ss}^\bot)_{s} +( \mu\omega v q_{s}^\bot)_{ss}\}\,ds\,dt\,.\\
$$
Integrating by parts and using $\mu(1,t)=\mu_s(1,t)=0$, $\bar q(0,t)=\bar q_s(0,t)=0$, we easily get
\begin{equation}\label{variationu}
\langle \delta_u \bar{\mathcal L},v\rangle=\int_0^{T}\int_0^1 \left( u + \omega\bar H[q,\bar q]\right) v\,ds\,dt\,,\qquad\mbox{i.e.}\qquad \delta_u \bar{\mathcal L}=u + \omega\bar H[q,\bar q] 
\quad\mbox{in \,\,}L^2((0,1)\times(0,T))\,.
\end{equation}
\noindent Summarizing, we end up with the following strong formulation of the adjoint system
\begin{equation}\label{adjoint}
\left\{\begin{array}{ll}
                  \rho \bar q_{tt}=\left(\sigma \bar q_s-H \bar q_{ss}^\bot\right)_s -\left(G \bar q_{ss}+H \bar q_{s}^\bot\right)_{ss} &\mbox{in }(0,1)\times(0,T)\\
                  \qquad +\left(\bar\sigma  q_s-\bar H q_{ss}^\bot\right)_s -\left(\bar G q_{ss}+\bar H q_{s}^\bot\right)_{ss}\\
                  \bar q_s \cdot q_s=0 &\mbox{in }(0,1)\times(0,T)\\
                  \bar q(0,t)=0 & t\in(0,T)\\
                  \bar q_s(0,t)=0& t\in(0,T)\\
                  \bar q_{ss}(1,t)=0& t\in(0,T)\\
                  \bar q_{sss}(1,t)=\frac{1}{\tau\varepsilon}\left((q-q^*)\cdot q_s^\bot\right) q_s^\bot (1,t)& t\in(0,T) \\
                  \bar\sigma(1,t)=-\frac{1}{\tau}(q-q^\ast)\cdot q_s(1,t)& t\in(0,T)\\
                  \bar q(s,T)=-q_t(s,T)& s\in(0,1)\\
                  \bar q_t(s,T)=0& s\in(0,1)
                 \end{array}
\right.
\end{equation}
whereas, using \eqref{variationu} and imposing box constraints, the optimal control $u$ should satisfy, for every $v:[0,1]\times[0,T]\to[-1,1]$, the following variational inequality
\begin{equation}\label{varinequality}
\int_0^{T}\int_0^1 \left( u + \omega\bar H[q,\bar q]\right) (v-u)ds\,dt\ge 0\,.
\end{equation}
It is easy to see that, in case of dissipation, the frictional forces $-\beta q_t$ and $-\gamma q_{sssst}$ in the equations of motion \eqref{tentaclemotionfriction}, 
simply produce in \eqref{adjoint} the analogous (but with opposite signs) terms  $\beta \bar q_t$ and $\gamma \bar q_{sssst}$ respectively.
\subsection{An adjoint-based gradient descent method} From a numerical point of view, solving the optimality system obtained in the previous section is a complicate task,  
since we have to find an optimal $5$-tuple $(q^*,\sigma^*,\bar q^*,\bar \sigma^*,u^*)$ satisfying \eqref{tentaclemotion}, \eqref{adjoint} and 
\eqref{varinequality} in the whole space-time interval $[0,1]\times[0,T]$. 
After discretization, this may result in a huge nonlinear system. Hence we adopt another (and simpler) approach, 
namely we use an iterative method that, at each iteration, splits the solution in three separate steps. More precisely, 
given a guess for the control $u$, we first solve the system of motion \eqref{tentaclemotion} with $u$ fixed, up to the final time. 
Then we solve the adjoint system \eqref{adjoint} backward in time, again with $u$ fixed and using the just computed $(q,\sigma)$ as a datum. 
Finally, we plug $q$ and $\bar q$ in the expression \eqref{variationu} of the gradient $\delta_u \bar{\mathcal L}$, and update the control $u$ 
along the corresponding descent direction. The procedure is iterated until convergence on the control. 
For the discretization of both forward and backward systems, we employ the Velocity Verlet scheme presented in Section \ref{Sec:Discretization}, 
whereas the box constraints on the control are enforced, at each iteration, by the projection on the interval $[-1,1]$, denoted by $\Pi_{[-1,1]}(\cdot)$. Summarizing, we build the following algorithm.\\
 \begin{algorithm}
\begin{algorithmic}[1]
\State Assign $u^{(0)}$, a target point $q^\ast$, a penalty $\tau>0$, a tolerance $tol>0$, a step size $\alpha>0$ and set $i=0$
\Repeat
\State Compute the solution $(q^{(i)},\sigma^{(i)})$ of \eqref{tentaclemotion}
\State Compute the solution $(\bar q^{(i)},\bar \sigma^{(i)})$ of \eqref{adjoint}
\State Compute $\delta_u \bar{\mathcal L}(q^{(i)},\bar q^{(i)})$ using \eqref{variationu}
\State Update the control $u^{(i+1)}=\Pi_{[-1,1]}\left(u^{(i)}-\alpha \delta_u \bar{\mathcal L}(q^{(i)},\bar q^{(i)})\right)$ and set $i=i+1$
\Until {$\left\| u^{(i)}-u^{(i-1)}\right\|_\infty\hspace{-3pt} < tol$}
\end{algorithmic}
\caption{}\label{ALG2}
\end{algorithm}
Let us discuss how to build a suitable initial guess $u^{(0)}$ for the algorithm above. 
Once $q^\ast$ is fixed, we employ Algorithm \ref{ALG1} for the stationary control problem. 
By construction, this provides an equilibrium $\tilde q(s)$ minimizing the distance of the tentacle tip from $q^\ast$, and we can easily synthesize 
the corresponding optimal control $\tilde u(s)$ using the relation 
$\tilde u(s)=\frac{1}{\bar \omega(s)}\tilde q_s(s)\times \tilde q_{ss}(s)$. Then, we define $u^{(0)}$ extending $\tilde u$ to a function on $[0,1]\times[0,T]$ 
which is constant in time, i.e. we set $u^{(0)}(s,t):=\tilde u(s)$ for all $t\in[0,T]$, and we refer to it as to the {\em static optimal control}. 
It is clear that, if we plug $u^{(0)}$ in the controlled dynamical system \eqref{tentaclemotion}, the corresponding evolution will convergence to $\tilde q$ 
only in presence of friction and for $T\to\infty$ (see the simulations in Section \ref{Sec:Discretization}). Here the aim is to show that Algorithm \ref{ALG2} can 
do much better, producing a {\em dynamic optimal control} that changes both in space and time. To this end, we choose the parameters for the simulations as in Section \ref{Sec:Discretization}, with the exception of the friction coefficients, 
that we set as $\beta(s)=2-s$ and $\gamma(s)=10^{-6}(2-s)$. In this way, we accentuate the elastic behavior of the tentacle, 
and we can focus on the reduction of oscillations performed by the optimization. 
Moreover, we set $T=4$, $\tau=10^{-4}$, $tol=10^{-6}$ and $q^\ast=(0.5,-0.25)$. 
We also reduce the number of joints to $N=10$ to have a reasonable computational time, and we use a fixed gradient descent step size $\alpha$ for a simple code implementation.
In this setting, we found that $\alpha=10^{-4}$ produces an almost monotone decrease of the functional $\mathcal{J}$ in \eqref{dynamicfunc} up to convergence of the control. 
Finally, for a qualitative analysis of the results, it is useful to compute the following discretizations of the three terms appearing in $\mathcal{J}$,
$$
\mathcal{J}_u(n)=\frac{\Delta s}{2}\sum_{k=0}^N (u_k^n)^2\,,\quad \mathcal{J}_{q^\ast}(n)=\frac{1}{2\tau}|q_N^n-q^\ast|^2\,,\qquad
\mathcal{J}_{v}(n)=\frac{\Delta s}{2}\sum_{k=0}^N\rho_k|v_k^n|^2\,,
$$
respectively the control energy, the target energy and the kinetic energy at time $t_n=n\Delta t$.

Figure \ref{static-vs-dynamic} shows the resulting evolution at different times. 
For comparison, we represent in each frame the profile associated to the dynamic optimal control (black line), 
the profile associated to the static optimal control (gray line), 
and the equilibrium configuration (dashed line) corresponding to the target point $q^\ast$ (small circle). 
Moreover, Figure \ref{energy-static} and Figure \ref{energy-dynamic} show the behavior in time of the energies 
$\mathcal{J}_{v}$, $\mathcal{J}_{q^\ast}$ and $\mathcal{J}_{u}$, respectively for the static and the dynamic controls.  

While the static control keeps the tentacle oscillating around the target point, waiting for the slow friction dissipation, the optimized dynamics is really rich and deserves some comments. 
We observe that, at the very beginning, the dynamic control strongly activates to compensate the high acceleration given by the target potential, then relaxes to 
reduce the control energy ($0\le t\le 0.25$). 
This produces a first relevant decrease of the tip-target distance, but also some vibrations, clearly visible in the kinetic energy. They 
travel back and forth along the tentacle, and the control reacts again to mitigate them ($0.25\le t\le 0.5$). 
This is the key strategy to stabilize the tip with minimum effort, and it repeats in time, since each increment in the control energy produces new vibrations. Note also that, due to the boundary conditions, the last three joints are aligned, and they cannot be controlled 
by the assumptions \eqref{epsmu} on the elastic constant $\mu$. This suggests that the tip stabilization can be faster as $N\to\infty$, and we can also expect a smoother 
behavior of the kinetic energy. Finally, we observe that, as $t\to T=4$, the dynamic control energy converges to the static one. This is not a mere consequence of 
the choice for the initial guess $u^{(0)}$, since we know that the static control is optimal if the kinetic energy vanishes.
\begin{figure}[!h]
\centering
\begin{tabular}{ccccc}
 \includegraphics[width=.1725\textwidth]{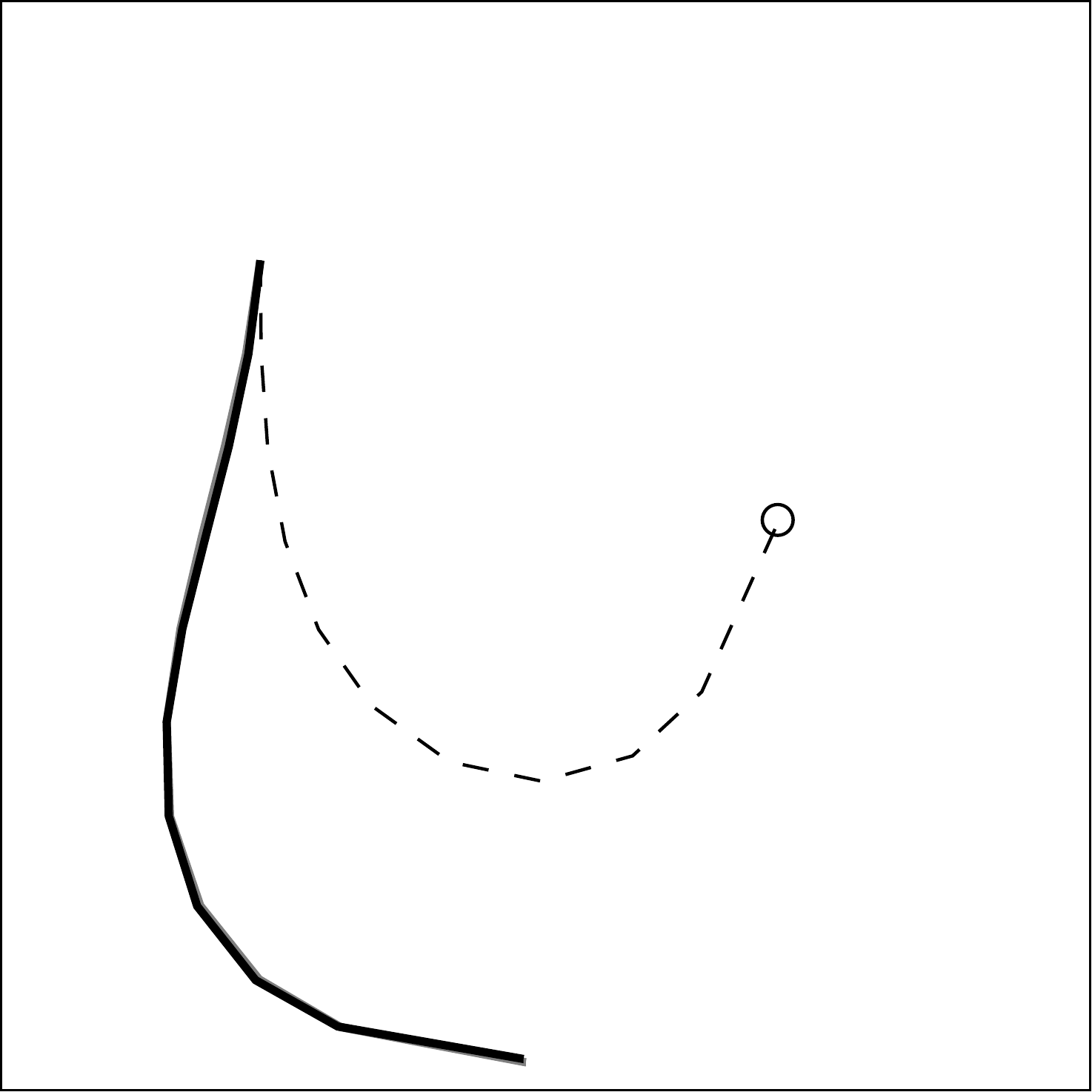}&
 \includegraphics[width=.1725\textwidth]{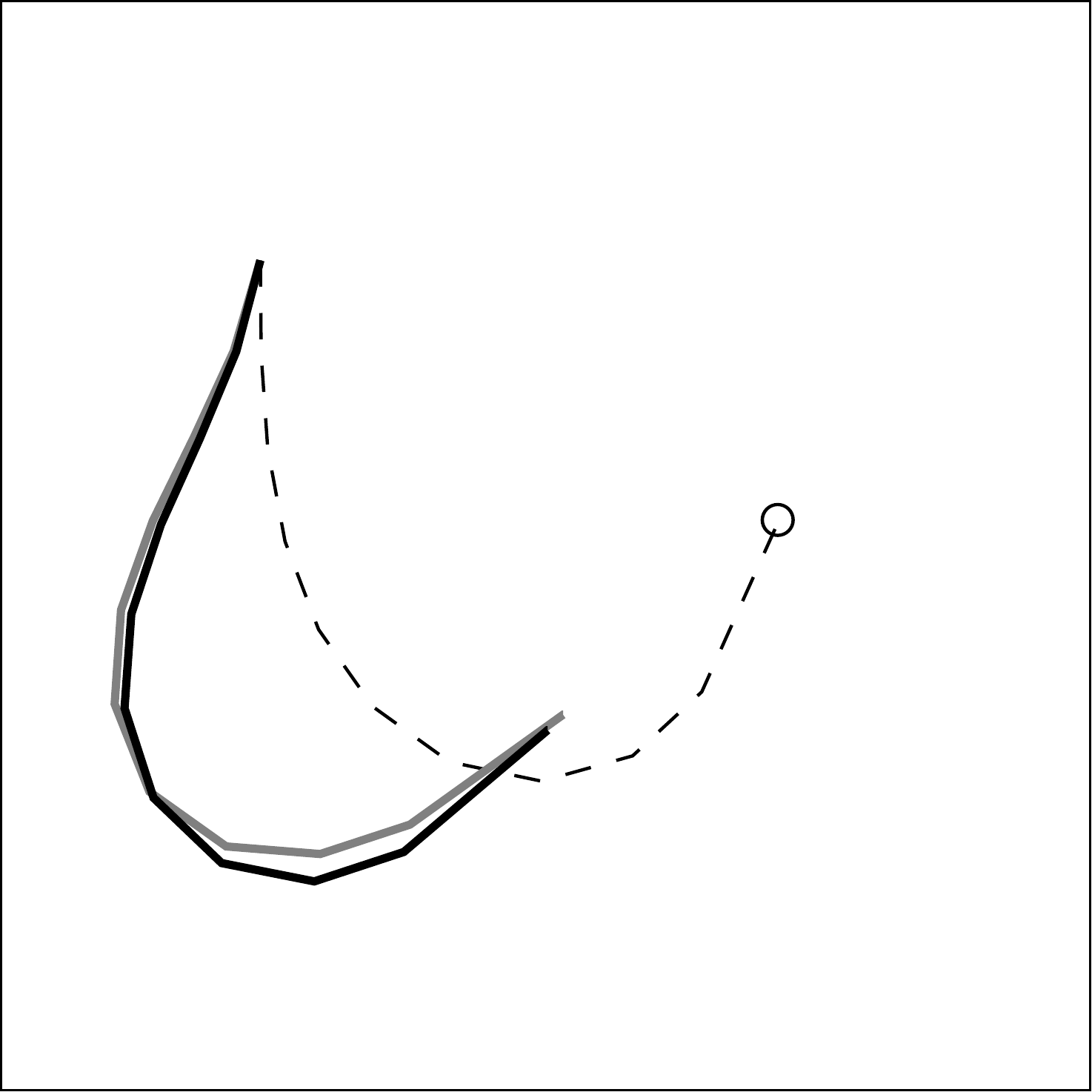}&
 \includegraphics[width=.1725\textwidth]{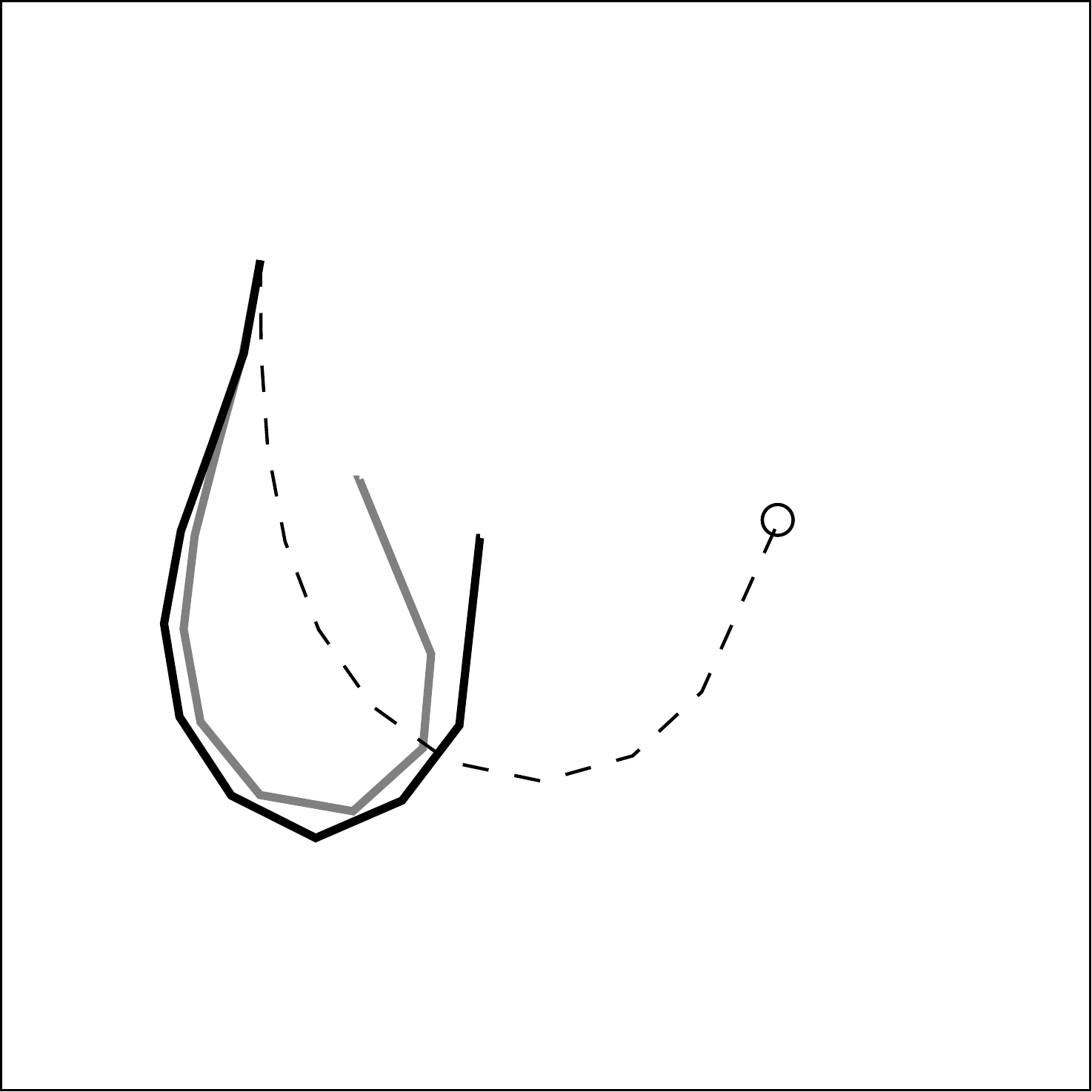}&
 \includegraphics[width=.1725\textwidth]{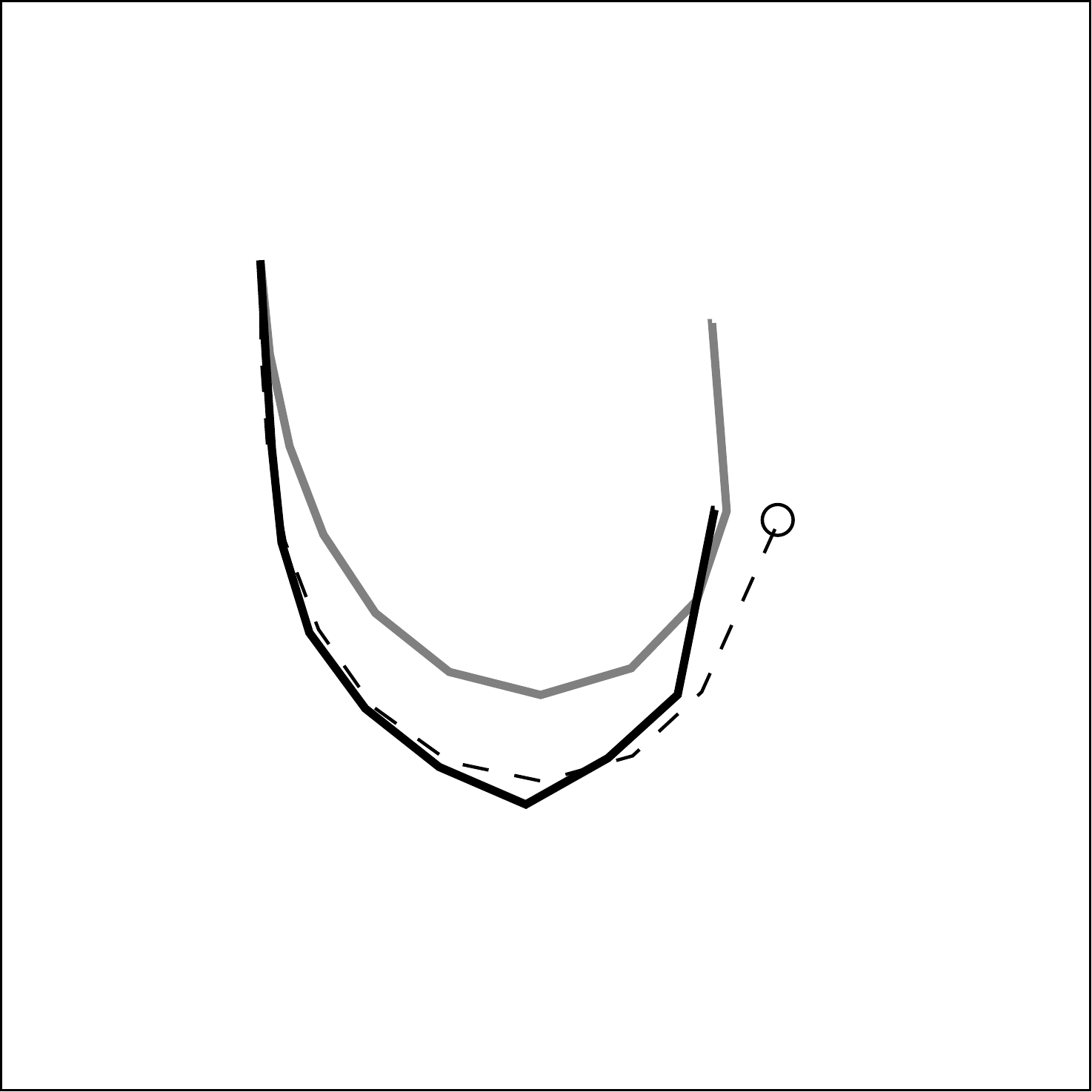}&
 \includegraphics[width=.1725\textwidth]{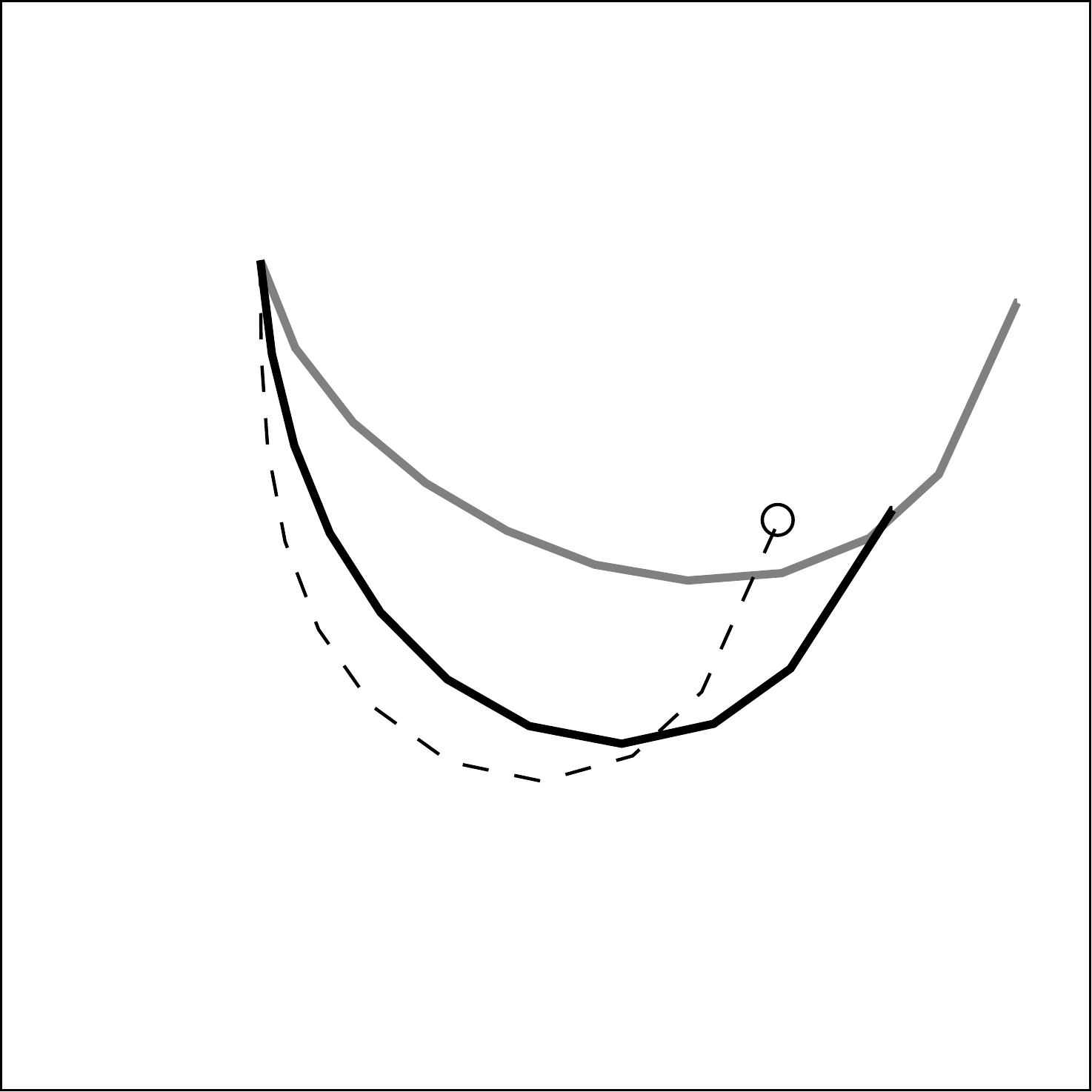}\\
 $t=0.1$ &$t=0.2$ &$t=0.3$ &$t=0.5$ &$t=0.7$\\
 \includegraphics[width=.1725\textwidth]{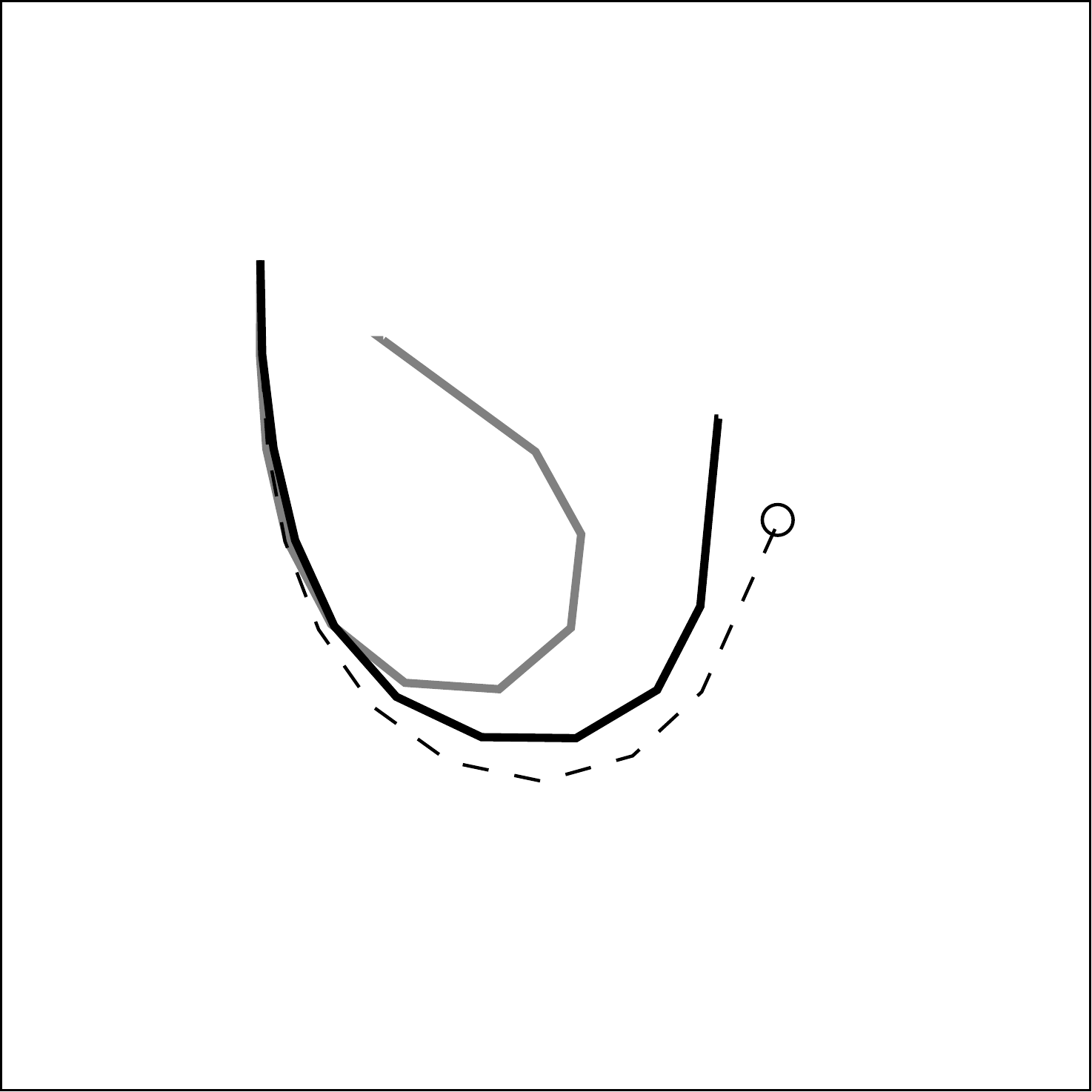}&
 \includegraphics[width=.1725\textwidth]{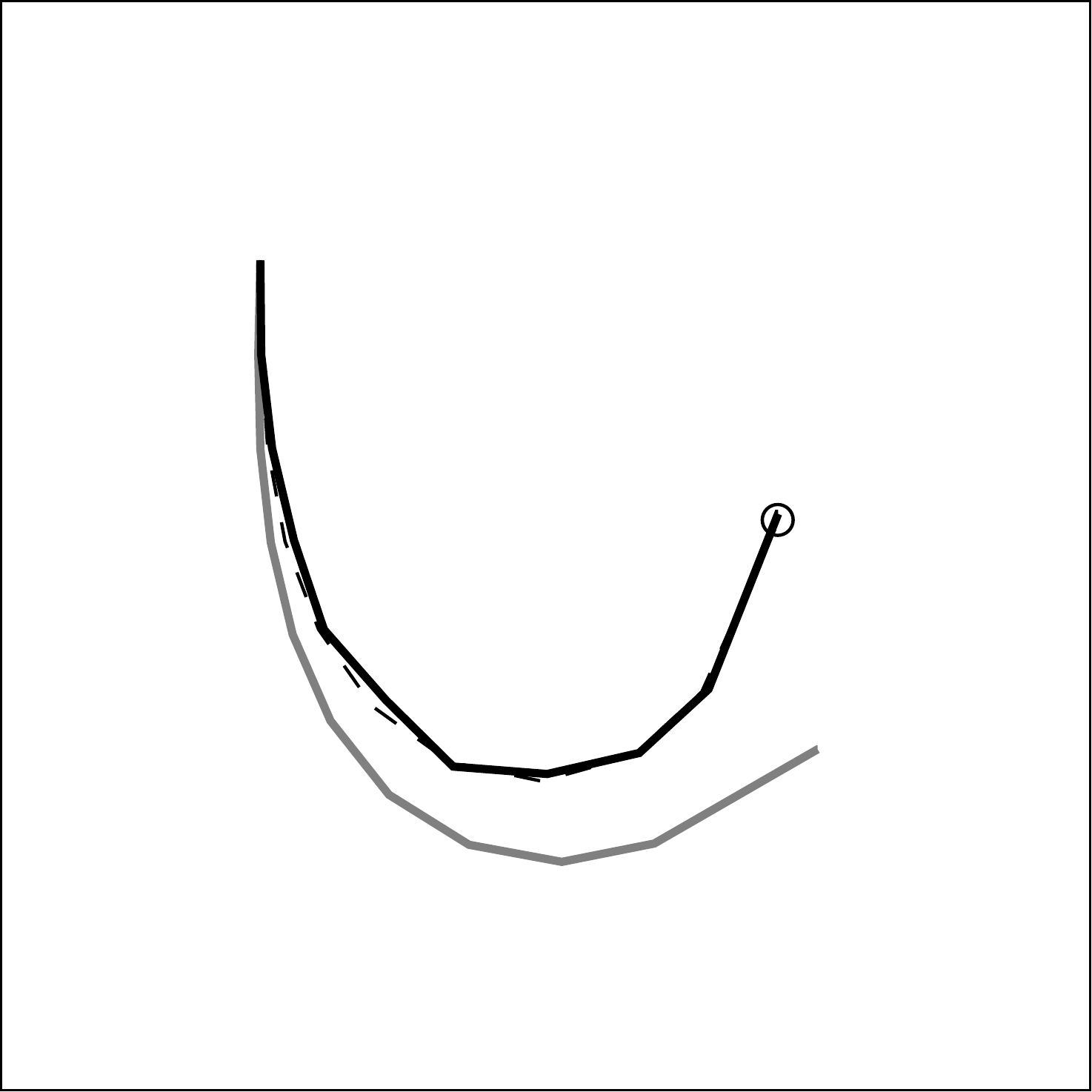}&
 \includegraphics[width=.1725\textwidth]{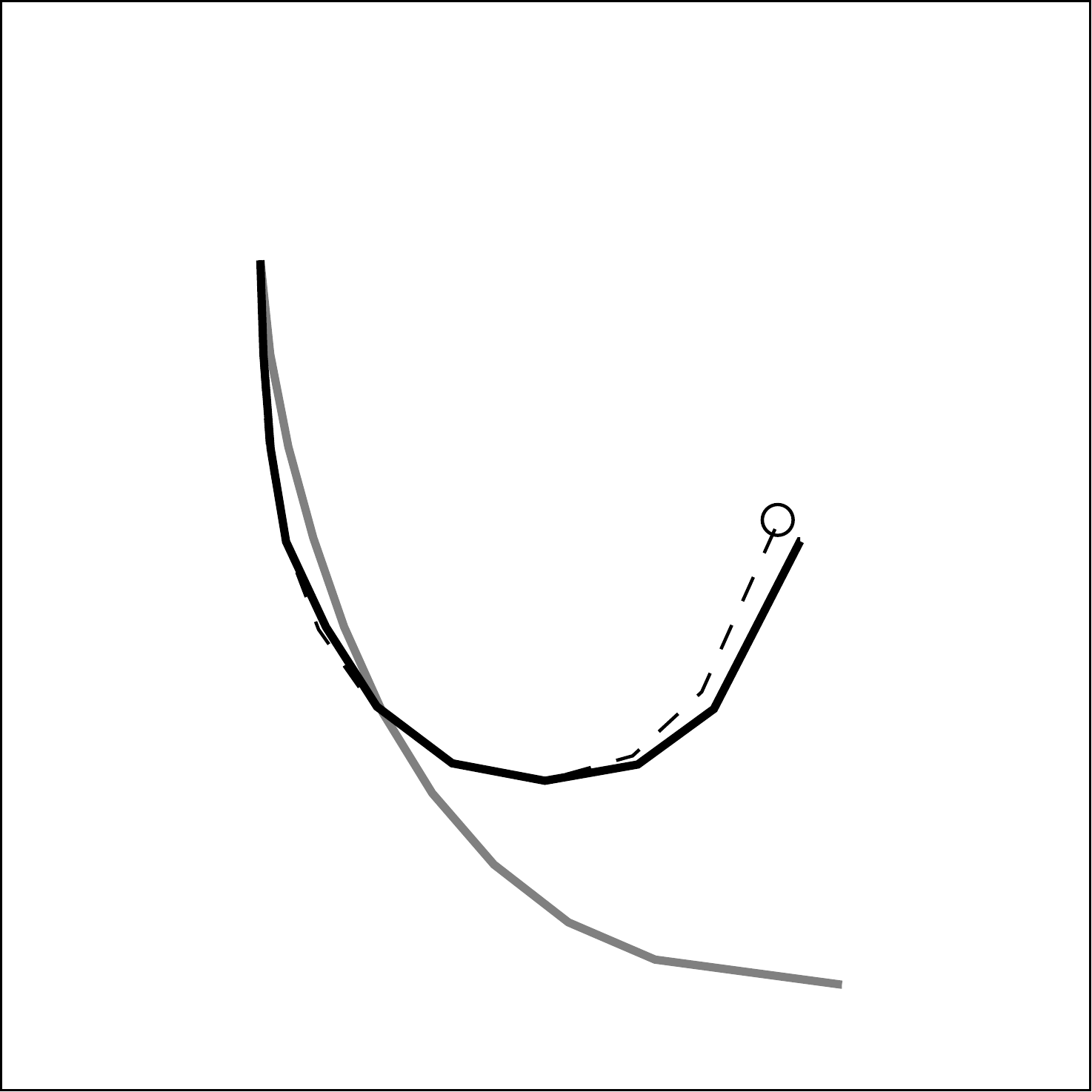}&
 \includegraphics[width=.1725\textwidth]{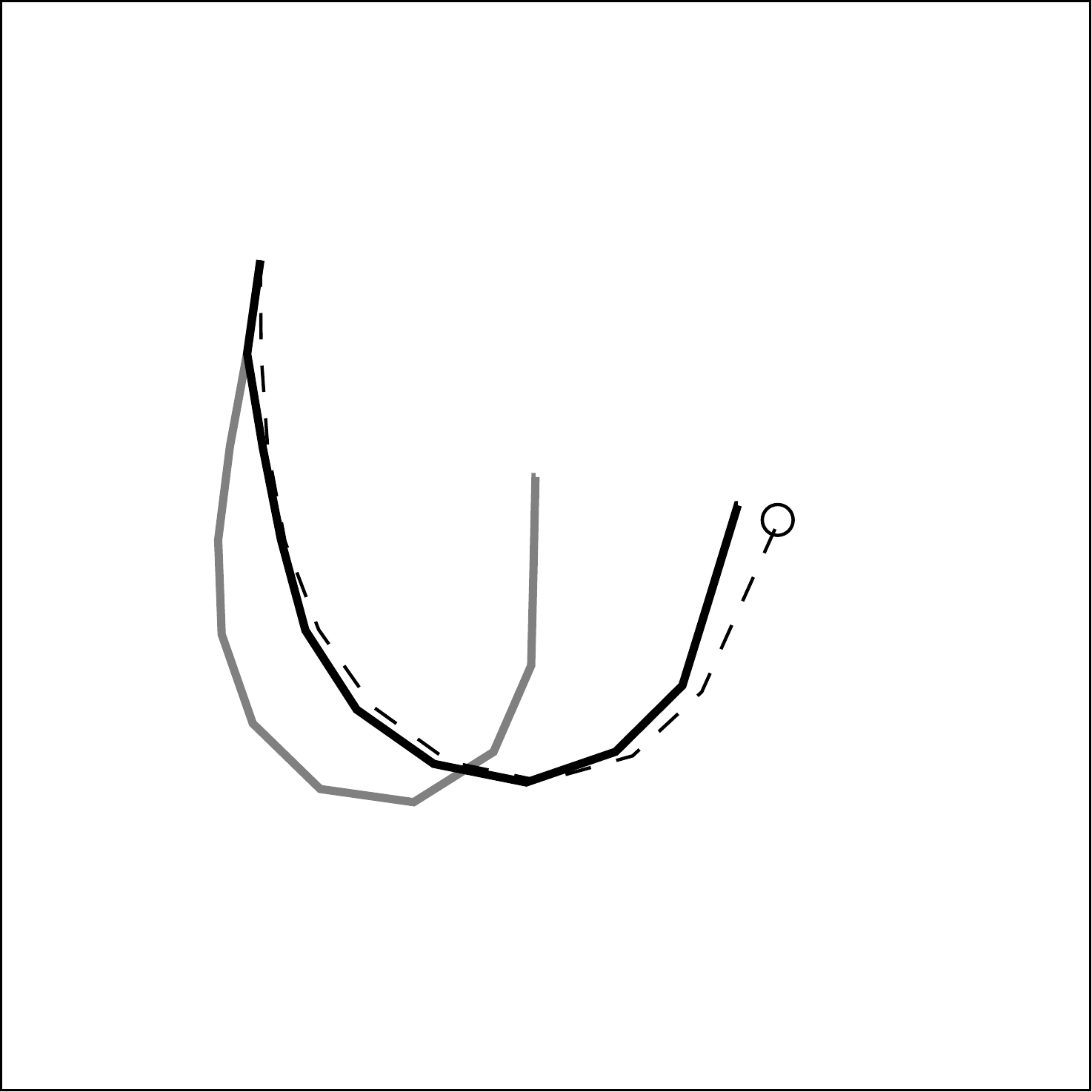}&
 \includegraphics[width=.1725\textwidth]{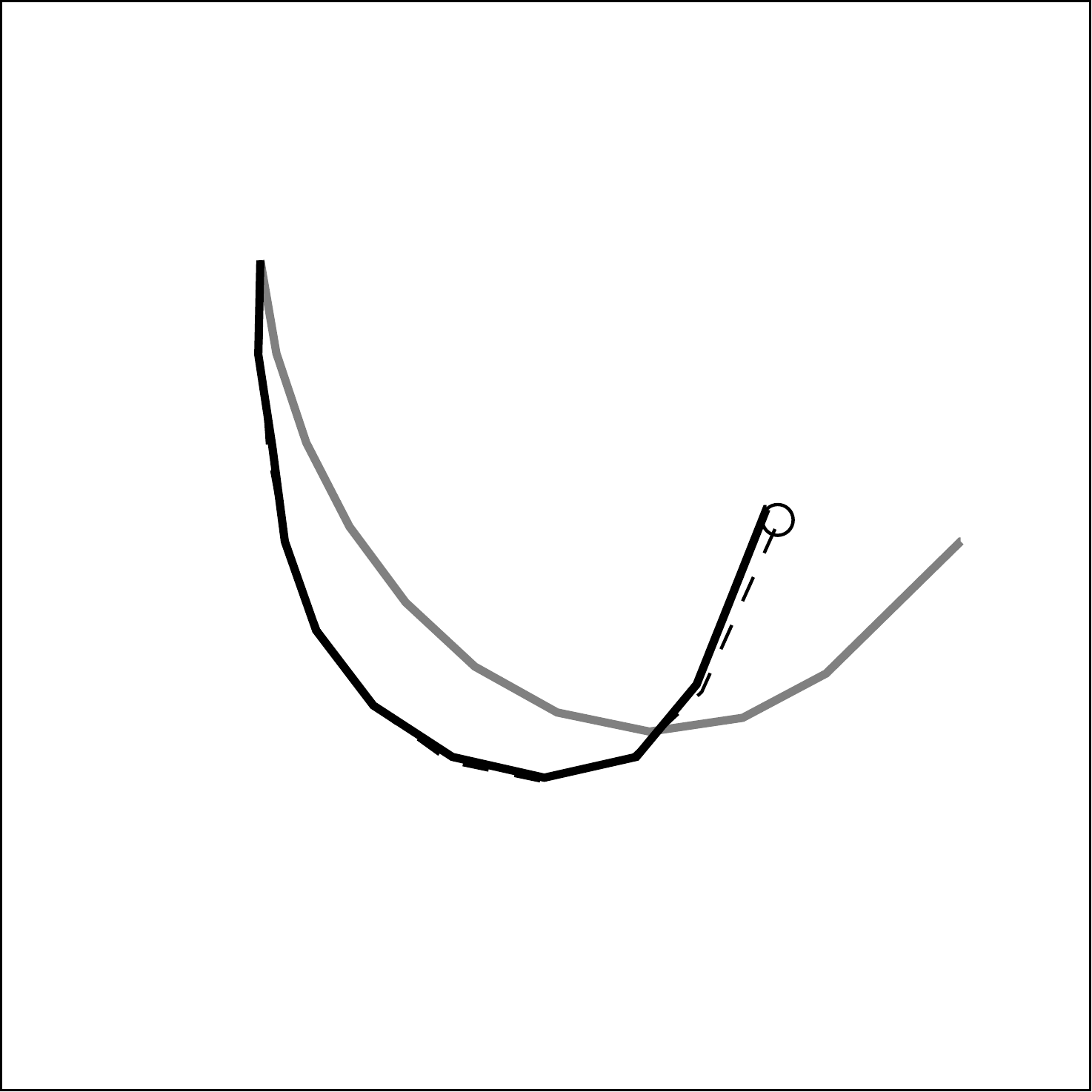}\\
 $t=1.0$ &$t=1.3$ &$t=1.4$ &$t=1.7$ &$t=2.1$\\
 \includegraphics[width=.1725\textwidth]{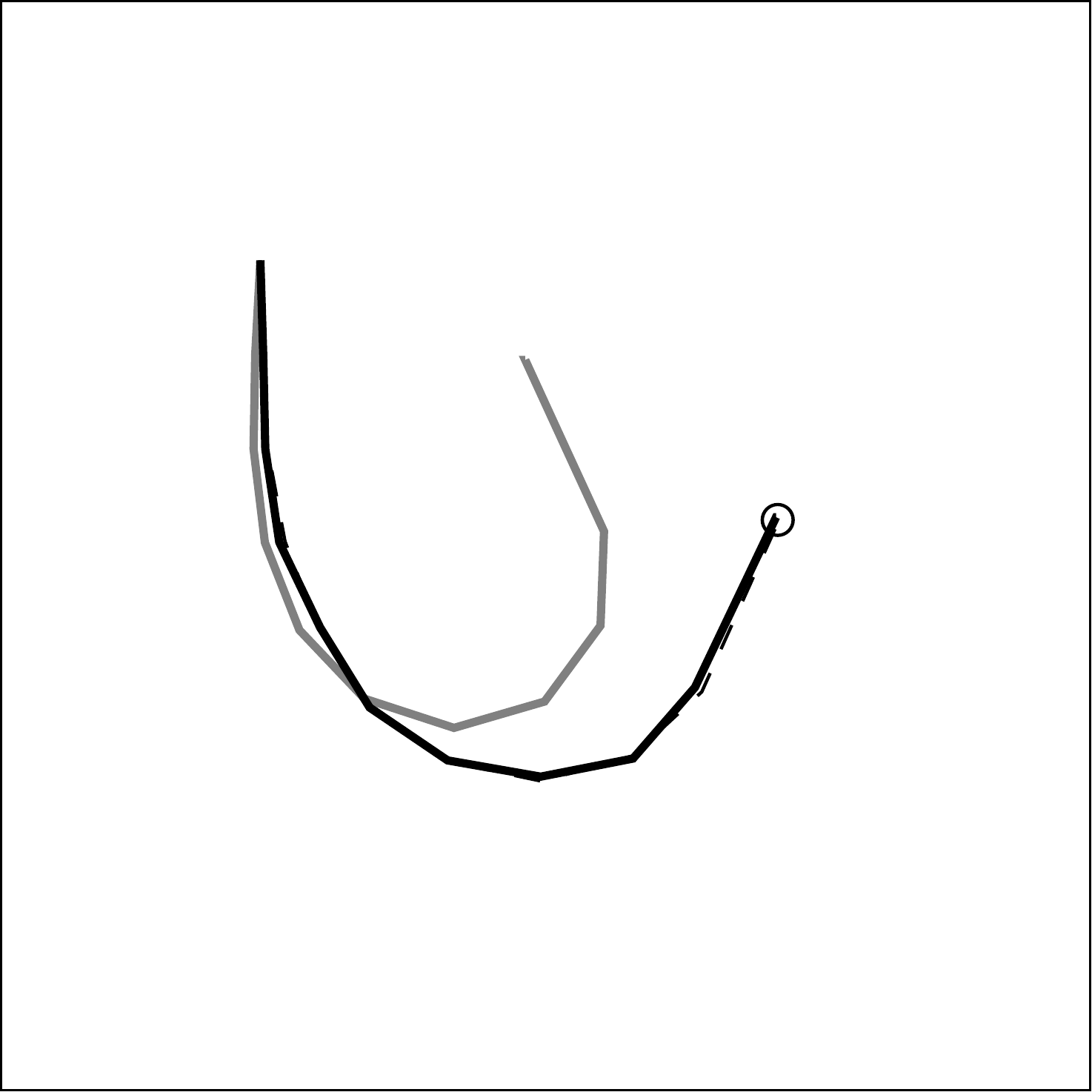}&
 \includegraphics[width=.1725\textwidth]{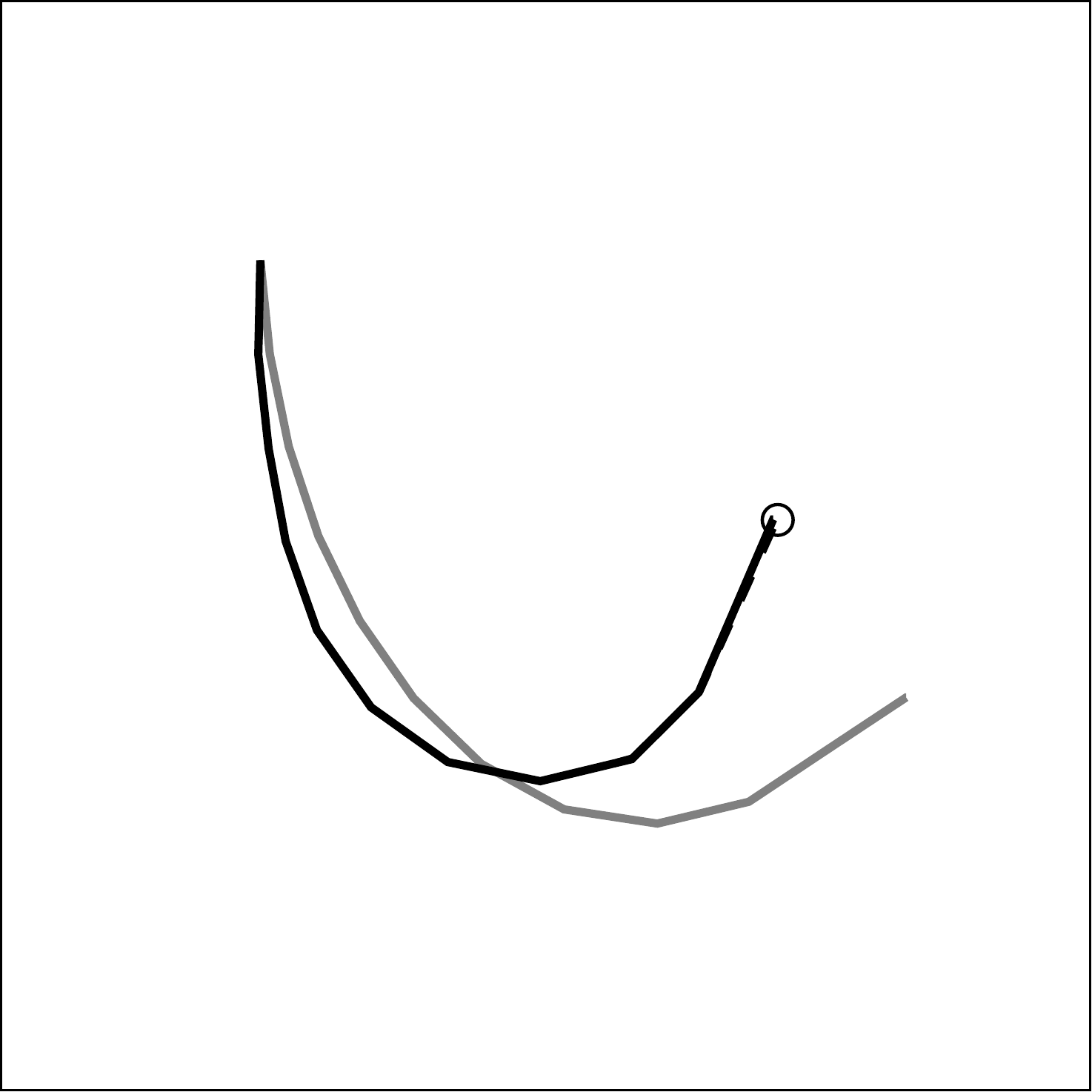}&
 \includegraphics[width=.1725\textwidth]{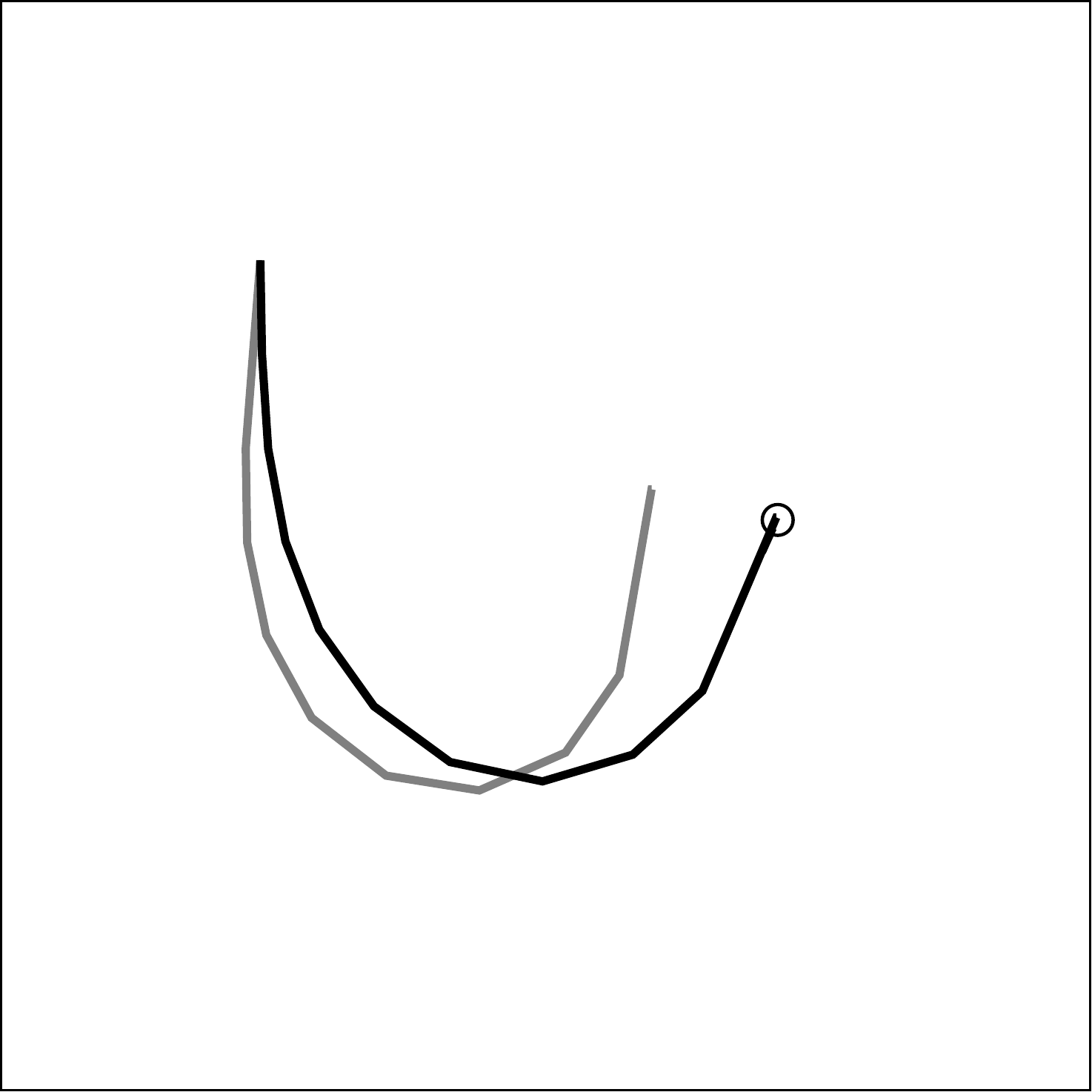}&
 \includegraphics[width=.1725\textwidth]{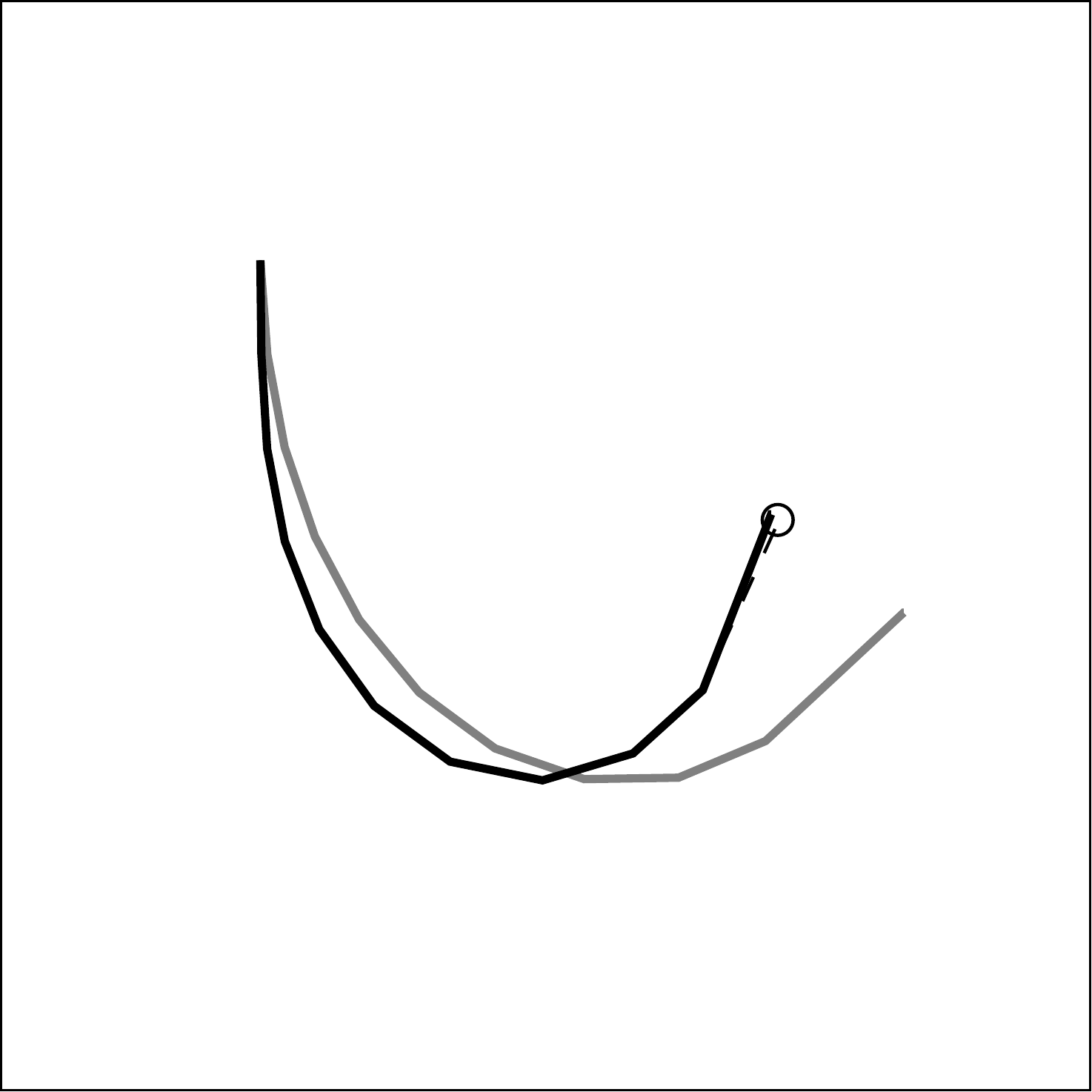}&
 \includegraphics[width=.1725\textwidth]{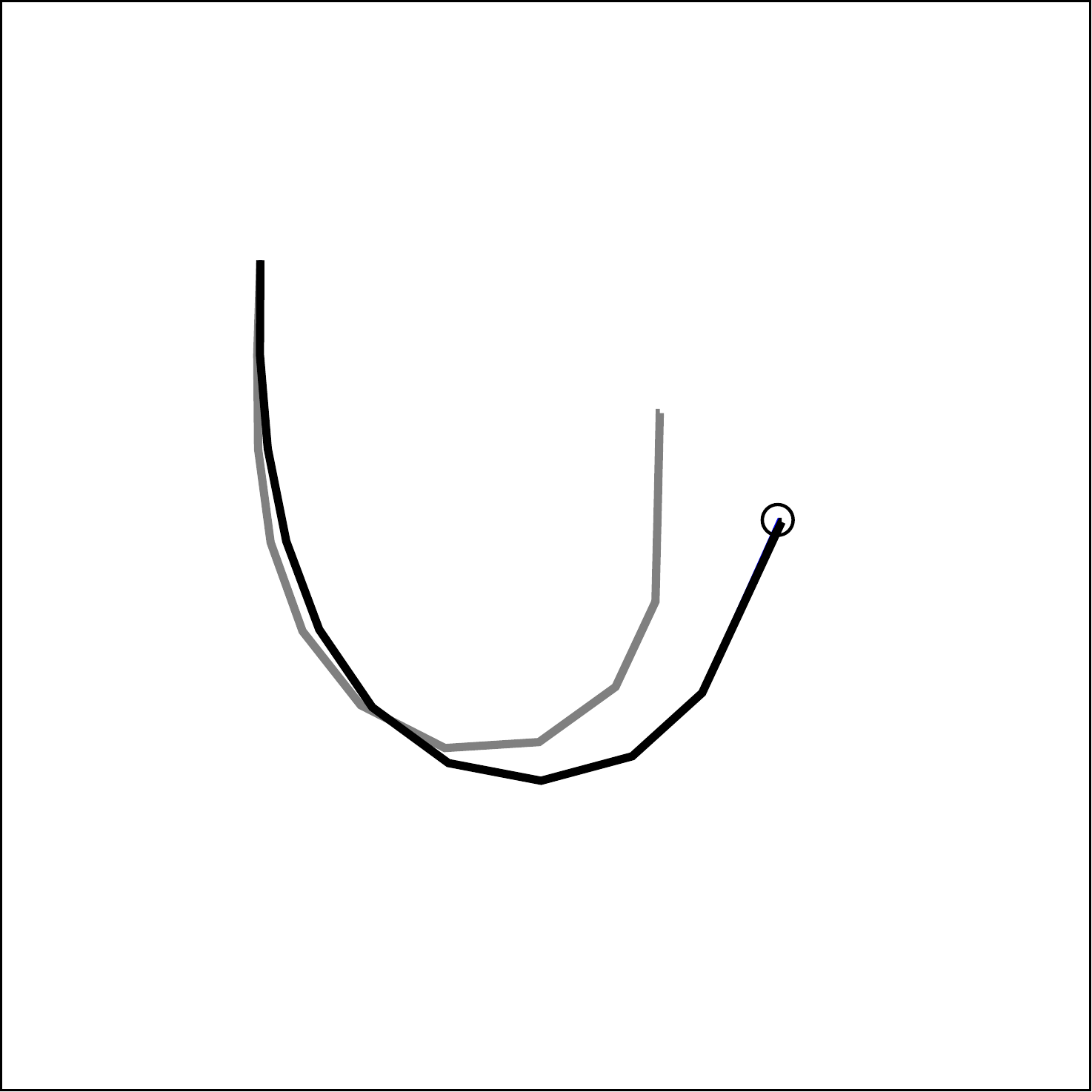}\\
  $t=2.5$ &$t=2.9$ &$t=3.2$ &$t=3.7$ &$t=4.0$
\end{tabular}
\caption{Static optimal control vs dynamic optimal control}\label{static-vs-dynamic}
\end{figure}
\begin{figure}[!h]
\centering
\begin{tabular}{ccc}
 \includegraphics[width=.28\textwidth]{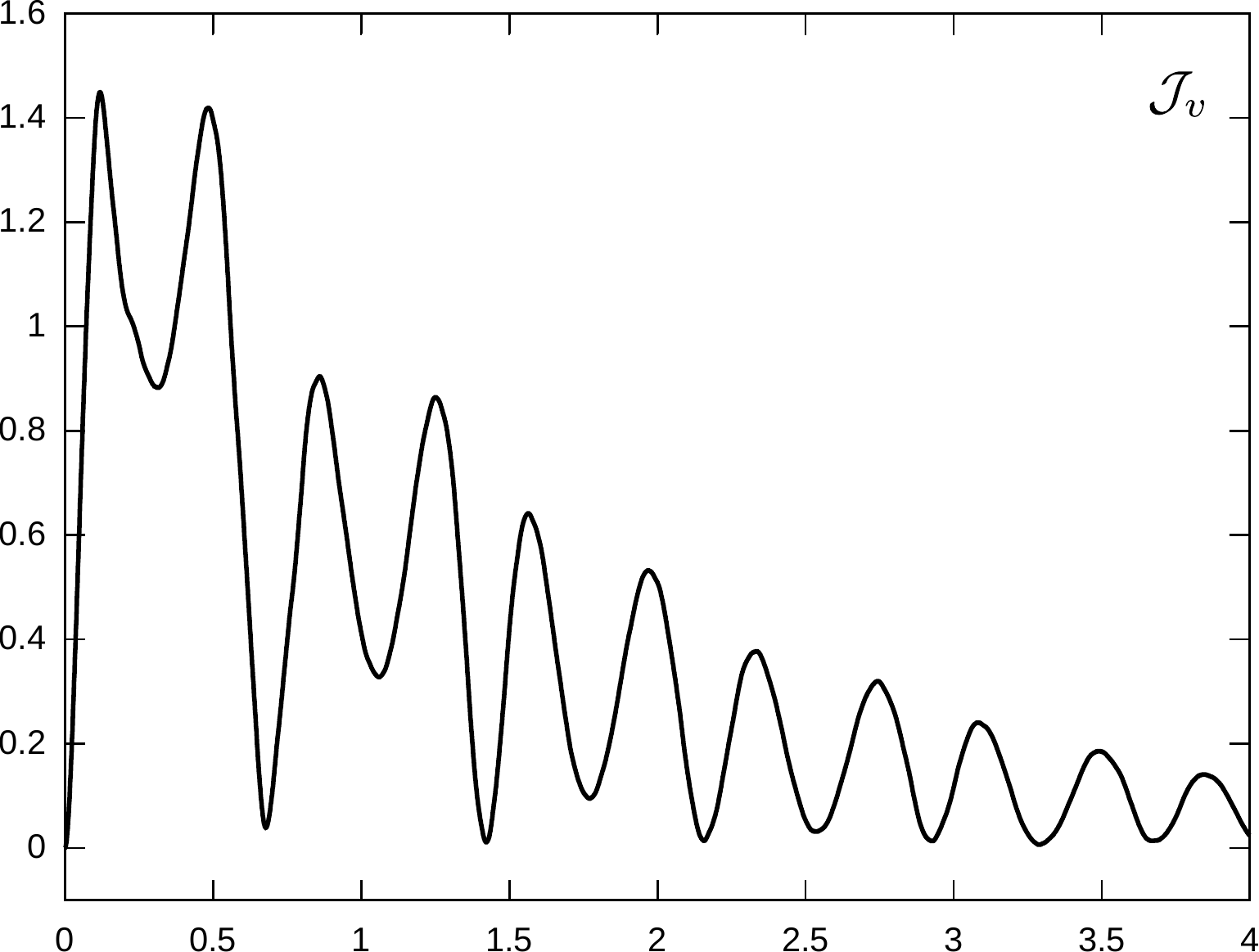}&
 \includegraphics[width=.28\textwidth]{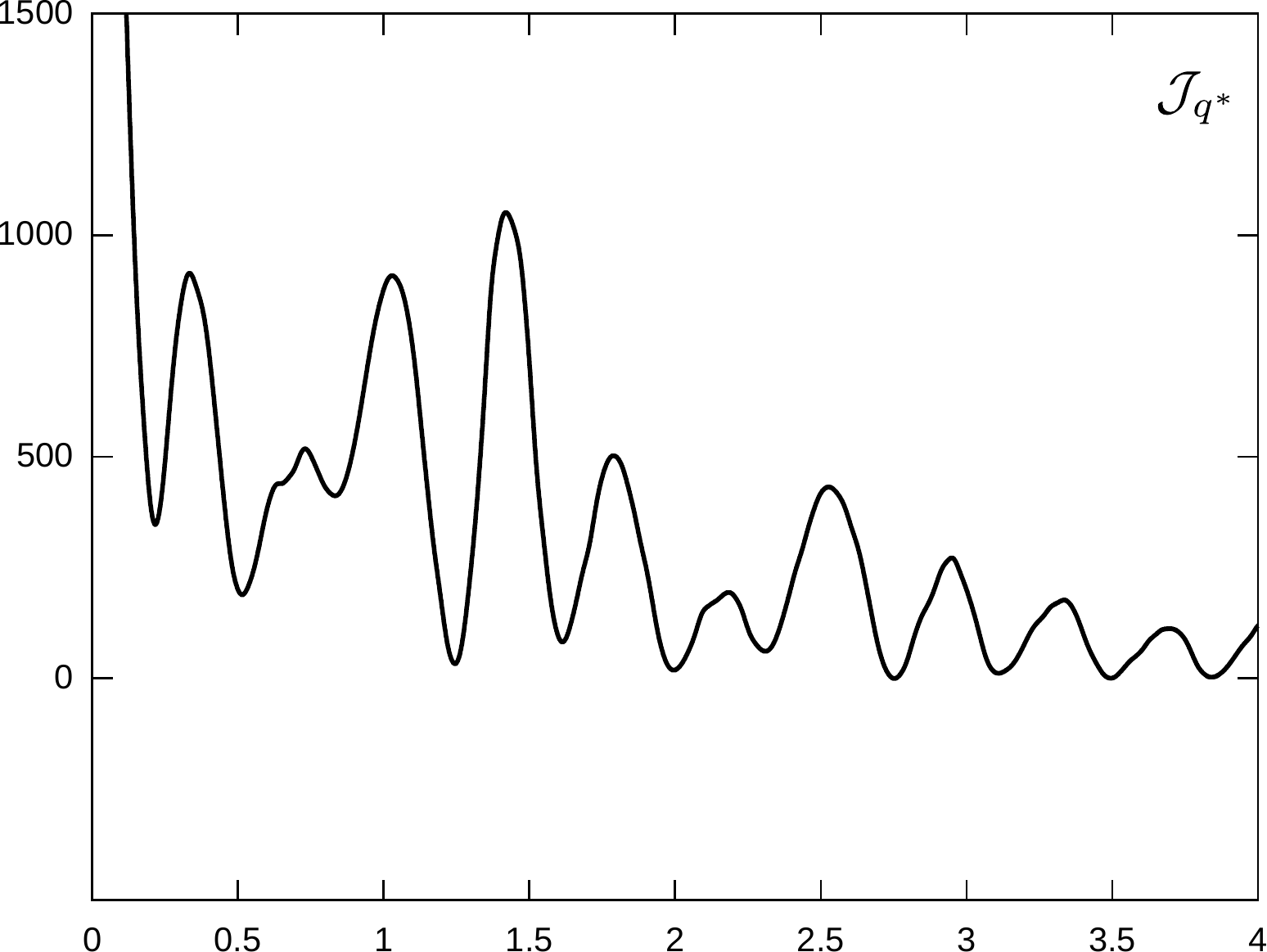}&
 \includegraphics[width=.28\textwidth]{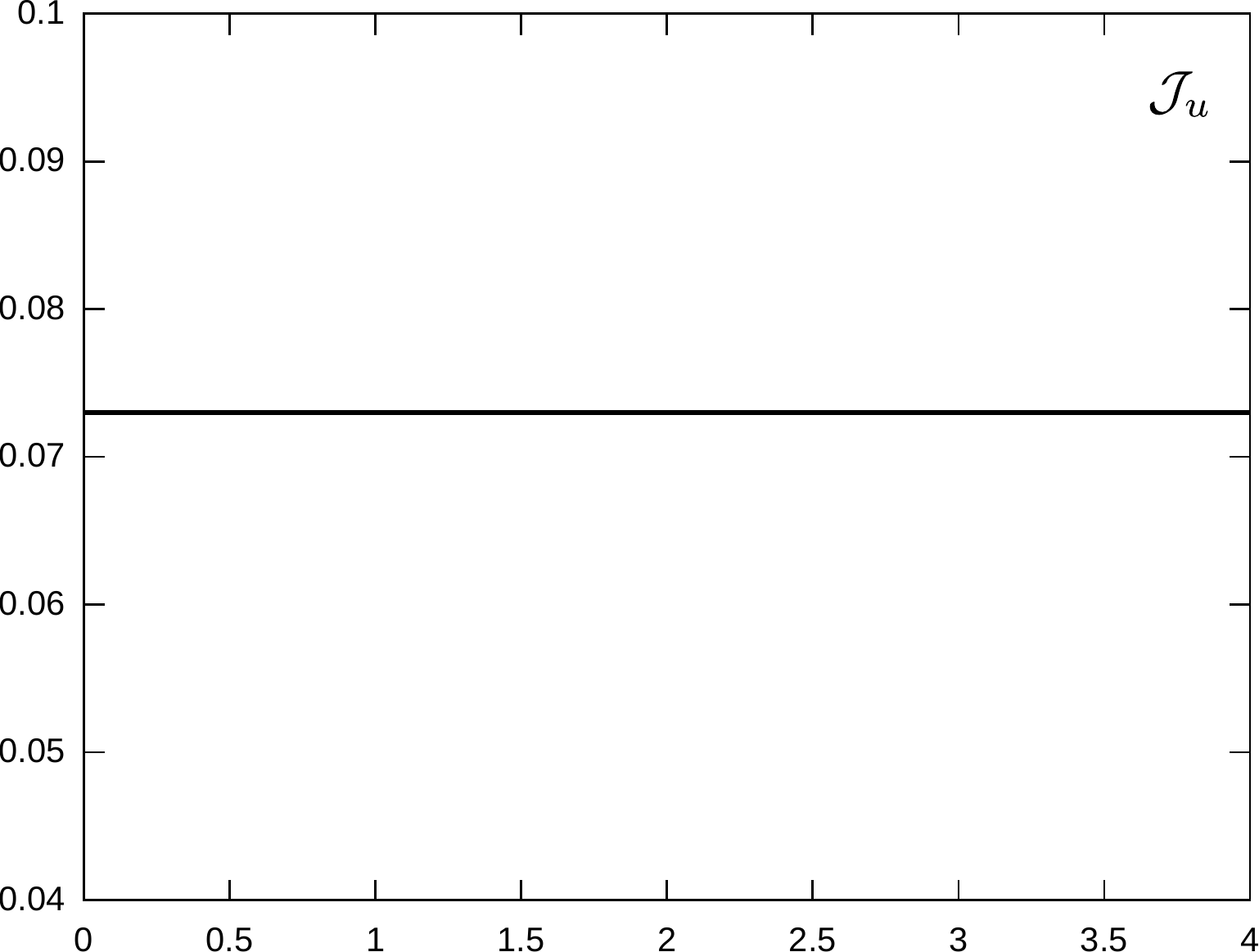}\\
 (a)&(b)&(c)
\end{tabular}
\caption{Time evolution of kinetic energy $\mathcal{J}_{v}$ (a), target energy $\mathcal{J}_{q^\ast}$ (b) and control energy $\mathcal{J}_{u}$ (c) for the static optimal control}\label{energy-static}
\end{figure}
\begin{figure}[!h]
\centering
\begin{tabular}{ccc}
 \includegraphics[width=.28\textwidth]{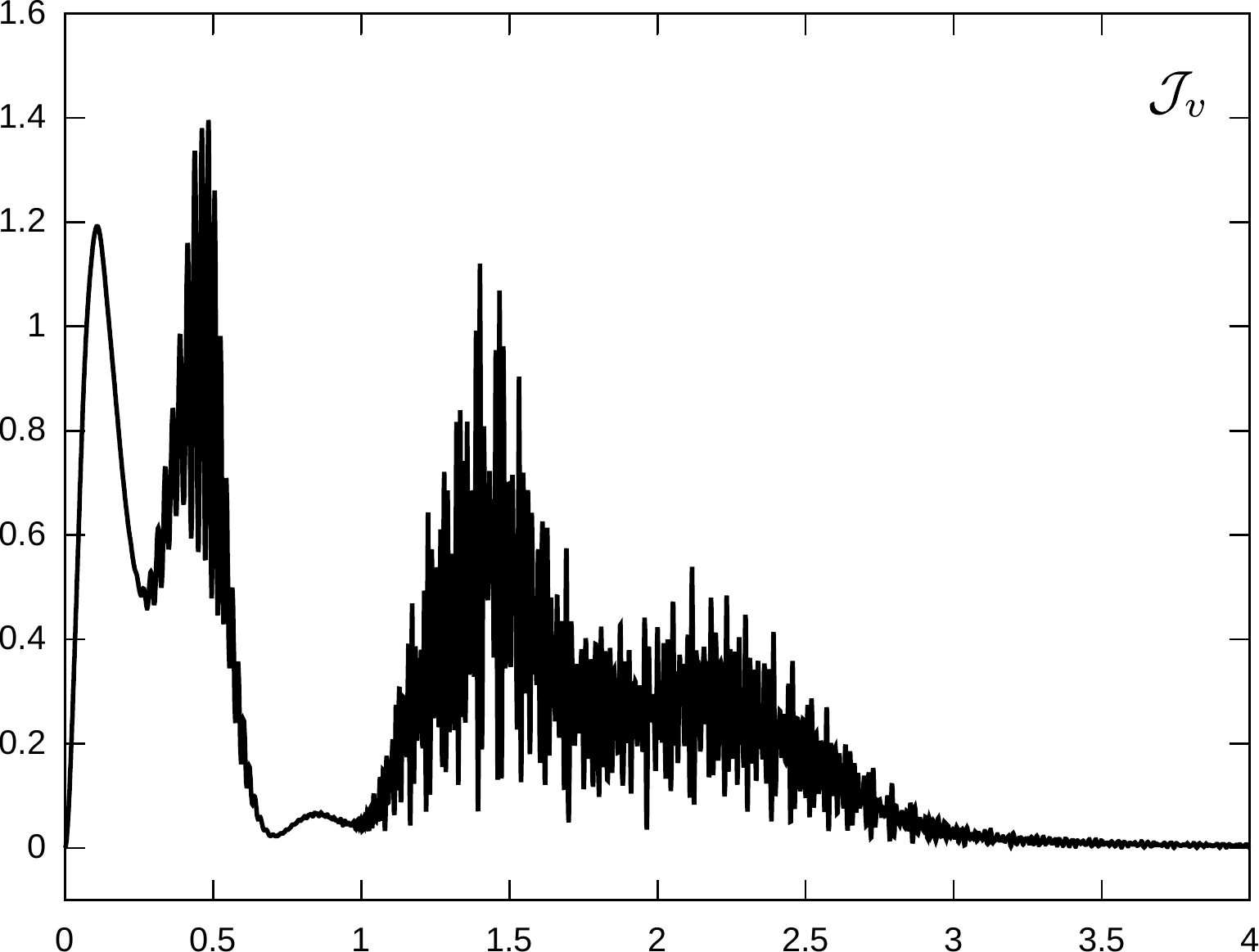}&
 \includegraphics[width=.28\textwidth]{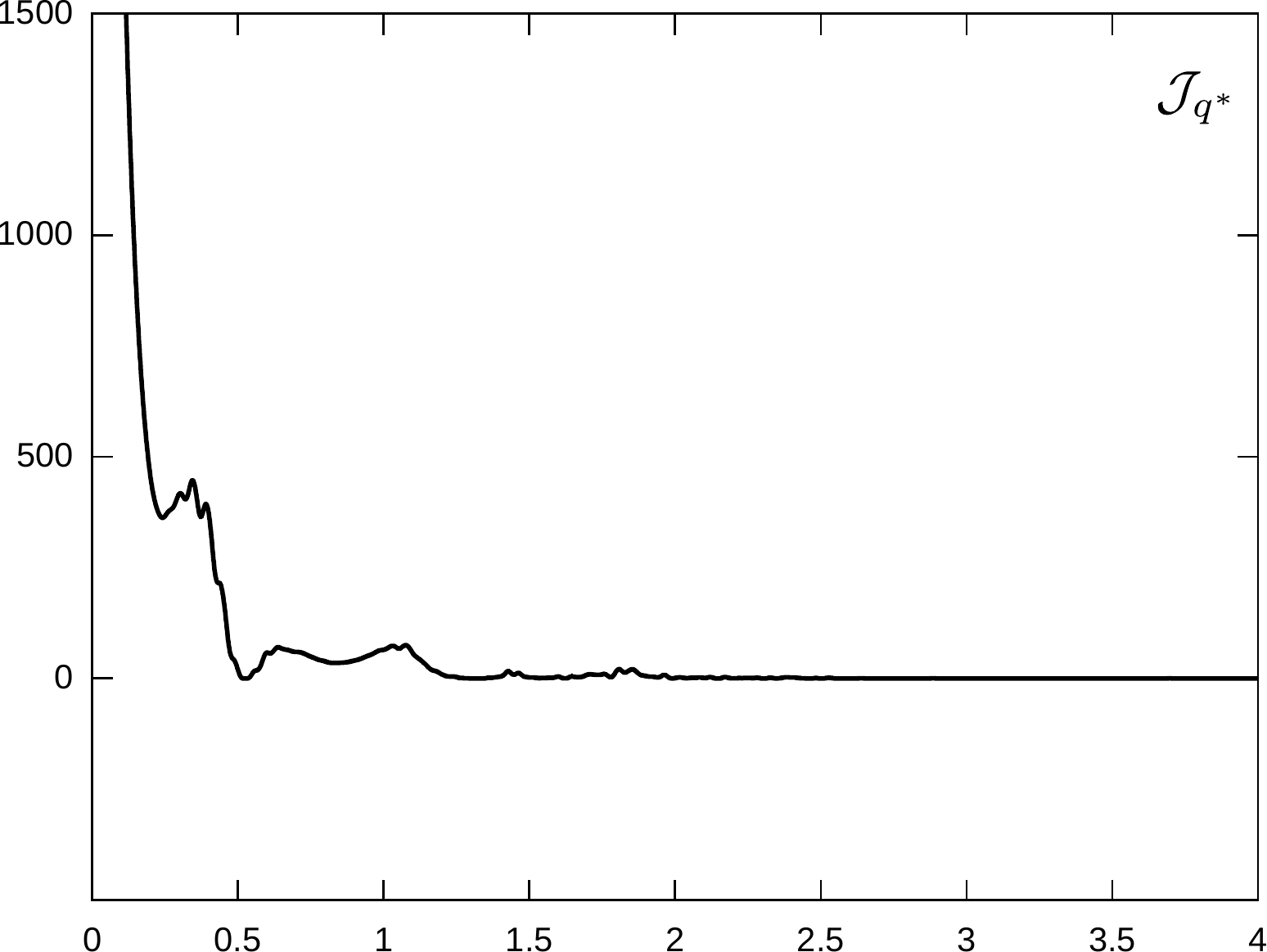}&
 \includegraphics[width=.28\textwidth]{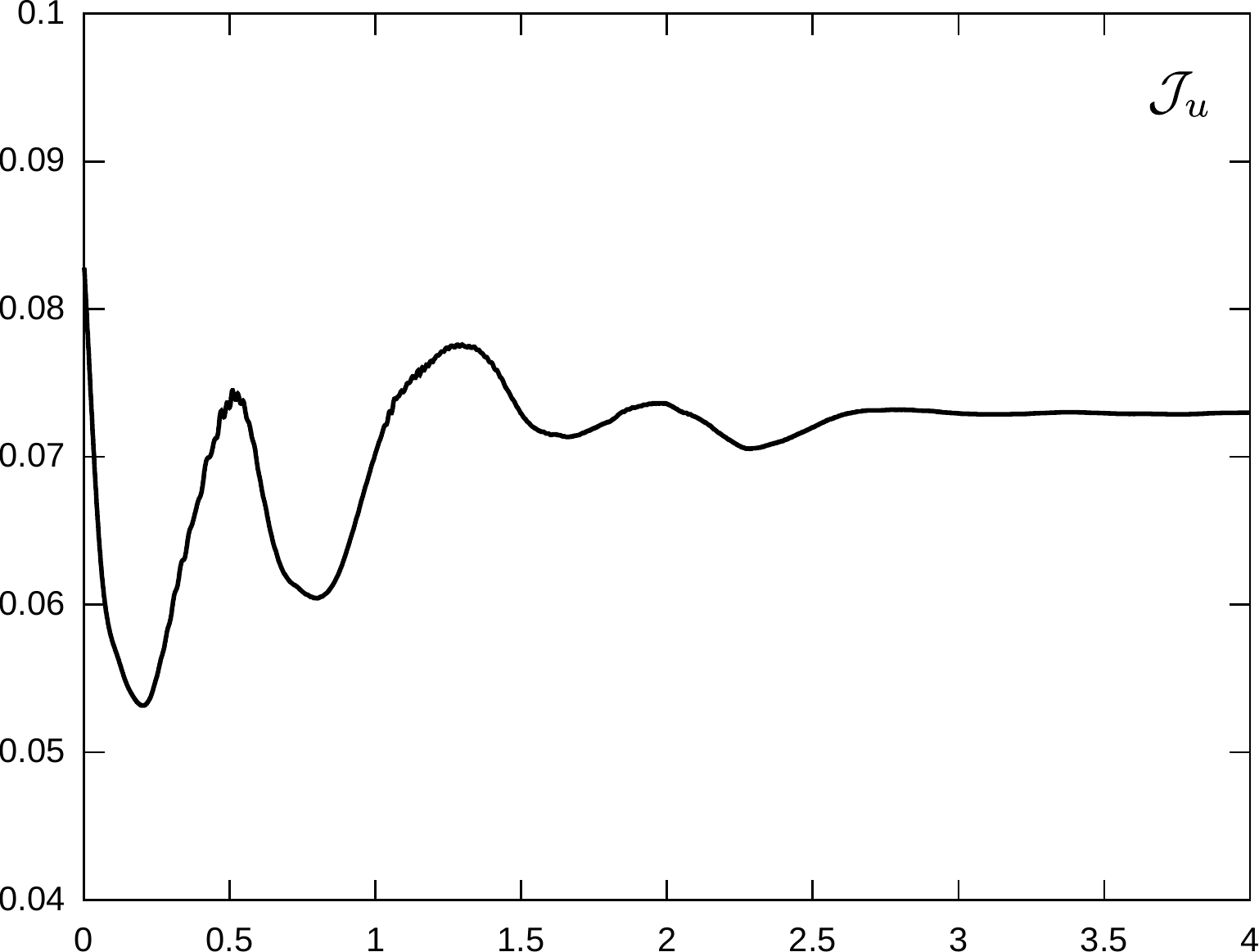}\\
 (a)&(b)&(c)
\end{tabular}
\caption{Time evolution of kinetic energy $\mathcal{J}_{v}$ (a), target energy $\mathcal{J}_{q^\ast}$ (b) and control energy $\mathcal{J}_{u}$ (c) for the dynamic optimal control}\label{energy-dynamic}
\end{figure}\\
\newcommand{\etalchar}[1]{$^{#1}$}

\end{document}